\documentclass[a4paper]{article}

\usepackage[latin1]{inputenc}
\usepackage[T1]{fontenc}

\usepackage[french,english]{babel}

\usepackage{amssymb,amsmath,amsthm,amscd,amsfonts}
\usepackage{mathrsfs}

\usepackage{subfig}
\usepackage{graphicx}
\usepackage{pstricks}

\usepackage{hyperref}
\usepackage{url}

\def\ie{\emph{i.e. }}

\def\R{\mathbb{R}}
\def\C{\mathbb{C}}
\def\N{\mathbb{N}}
\def\Z{\mathbb{Z}}
\def\P{\mathbb{P}^1}
\def\L{\mathbb{L}}
\def\H{\mathbb{H}}

\def\l{\lambda}
\def\a{\alpha}
\def\b{\beta}
\def\t{\theta}
\def\d{\delta}
\def\e{\varepsilon}
\def\ga{\gamma}

\def\X{\mathcal{X}}
\def\B{\mathcal{B}}
\def\D{\mathcal{D}}
\def\p{\mathcal{P}}
\def\E{\mathcal{E}}
\def\A{\mathcal{A}}

\def\w{\widetilde}

\def\g{\left(G,H \right) }

\def\gxt{\left(G(x,t),H(x,t) \right) }

\def\Y{\mathbf{Y}}

\def\ssm{\smallsetminus}

\newcommand{\dd}{\mathrm{d}}

\DeclareMathOperator{\tr}{Tr}

\DeclareMathOperator{\I}{I}
\DeclareMathOperator{\GL}{GL}
\DeclareMathOperator{\SL}{SL}
\DeclareMathOperator{\SO}{SO}
\DeclareMathOperator{\SU}{SU} 
\DeclareMathOperator{\OO}{O}
\DeclareMathOperator{\MM}{M}

\theoremstyle{plain}
\newtheorem{thm}{Theorem}

\newtheorem{prop}{Proposition}[section]
\newtheorem{lemma}{Lemma}[section]

\newtheorem*{thm*}{Theorem}

\theoremstyle{definition}
\newtheorem{defn}{Definition}[section]
\newtheorem*{nota}{Notation}
\newtheorem{rem}{Remark}[section]


\usepackage{enumitem}

\title{The Plateau problem for polygonal boundary curves in Minkowski $3$-space}
\author{Laura \textsc{Desideri}%
\thanks{Mathematisches Institut, Eberhard Karls Universit\"at T\"ubingen, Auf der Morgenstelle 10, 72076 T\"ubingen, Germany (email: \texttt{desideri@math.jussieu.fr})}}
\date{}

\begin{document}

\maketitle

\begin{abstract}
We apply Garnier's method to solve the Plateau problem for maximal surfaces in Minkowski $3$-space. Our study relies on the improved version we gave in~\cite{Desideri} of Garnier's resolution~\cite{Garnier28} of the Plateau problem for polygonal boundary curves in Euclidean $3$-space. Since in Minkowski space the method does not allow us to avoid the existence of singularities, the appropriate framework is to consider \emph{maxfaces} --- generalized maximal surfaces without branch points, introduced by Umehara and Yamada~\cite{UmeharaYamada06}. We prove that any given spacelike polygonal curve in generic position in Minkowski $3$-space bounds at least one maxface of disk-type. This is a new result, since the only known result for the Plateau problem in Minkowski space deals with boundary curves of regularity $C^{3,\a}$~\cite{Quien}.
\end{abstract}

%
%

\bigskip

\noindent
\emph{Keywords:} maximal immersions with singularities, integrable systems, Fuchsian equations and Fuchsian systems, the Riemann--Hilbert problem, isomonodromic deformations, the Schlesinger system.

\bigskip

\noindent
\emph{Mathematics Subject Classification (2010):} 53C42, 53C50, 34M03, 34M35, 34M50, 34M55, 34M56.


\section*{Introduction}
\addcontentsline{toc}{section}{Introduction}

An immersion of a Riemann surface into the three-dimensional Minkowski space $\L^3=\left( \R^3, \dd X_1^2 + \dd X_2^2 - \dd X_3^2 \right)$ is said to be \emph{maximal} if it is spacelike, and if its mean curvature vanishes everywhere. Since it is a well-known fact that the only complete maximal surfaces are spacelike planes, maximal surfaces with singularities are an increasing object of interest. O. Kobayashi~\cite{Kobayashi} investigated conelike singular points on maximal surfaces. Many examples with conelike singular points were then found and studied by F. J. L\'opez, R. L\'opez, and R. Souam~\cite{LopezLopezSouam}, Fern\'andez and F. J. L\'opez~\cite{FernandezLopez}, and Fern\'andez, F. J. L\'opez and R.  Souam~\cite{FernandezLopezSouam05}, \cite{FernandezLopezSouam07}.

To study more general singularities, F. Estudillo and A. Romero~\cite{EstudilloRomero} defined a notion of \emph{generalized maximal surface}. These surfaces may have two types of singularities: branch points, and points at which the normal vector $N$ (which is well-defined and timelike at regular points) is still well-defined, but belongs to the light-cone. They also provided criteria for such surfaces to be planes. In~\cite{UmeharaYamada06}, M. Umehara and K. Yamada introduced \emph{maxfaces}, which are generalized maximal surfaces without branch point (see Section~\ref{section-max}). Maxfaces have analytical curves of singularities. Umehara and Yamada actually introduced maxfaces as projections into $\L^3$ of null holomorphic immersions into $\C^3$. They proved in~\cite{UmeharaYamada06} an Osserman type inequality for maxfaces. Umehara and Yamada~\cite{UmeharaYamada06}, and Fujimori, Saji, Umehara, and Yamada~\cite{UmeharaYamada08} studied singularities of maxfaces, which generically are cuspidal edges, swallowtails and cuspidal cross caps. Fujimori, Rossman, Umehara, Yamada, and Yang~\cite{RossmanUmeharaYamada} constructed new higher genus maxfaces.


For the Dirichlet problem the existence of regular spacelike graphs of prescribed mean curvature was shown by R. Bartnik and L. Simon~\cite{BartnikSimon} in $\L^{n+1}$, and independently by C. Gerhardt~\cite{Gerhardt} in Lorentz manifolds with a product structure. A. A. Klyachin and V. M. Miklyukov~\cite{KlyachinMiklyukov} gave results on the existence of solutions, with a finite number of isolated singularities, to the maximal hypersurface equation in $\L^{n+1}$ with prescribed boundary conditions. Concerning the Plateau problem, N. Quien proved that the boundary of any $C^{3,\a}$ spacelike hypersurface in $\L^{n+1}$ also bounds a regular maximal hypersurface --- which is not always a graph. His assumption on the boundary is not superfluous, since there exist spacelike curves in $\L^3$ which bound no regular spacelike surface at all, and in particular, no maximal one. He also provided in $\L^3$ a sufficient condition on the boundary for uniqueness to the Plateau problem.

In 1928, R. Garnier published a resolution of the Plateau problem~\cite{Garnier28} for polygonal boundary curves in Euclidean three-space. His proof relies on a resolution of the Riemann--Hilbert problem, and on isomonodromic deformations of Fuchsian equations. His paper is in places really complicated, and even obscure or incomplete, which may explain why it seems to have been forgotten. In the present paper, leaning on the more accomplished version we gave~\cite{Desideri} of his ideas, we apply Garnier's method when the ambient space is Minkowski space $\L^3$. As in Euclidean space, Garnier's method enables us to avoid the existence of branch point, but not of the second (new) possible type of singularities: we shall construct \emph{maxfaces}. In one hand, considering we are not looking for complete maximal surfaces, we should not need to authorize singularities to solve the Plateau problem in $\L^3$. But since we consider \emph{all} spacelike polygonal curves in generic position, some of them do not bound any regular maximal surface at all. We obtain the following result.

\begin{thm}
Every possibly unclosed spacelike polygonal curve $P\subset\L^3$ in generic position bounds a generalized maximal disk, without branch point. Moreover, if $P$ is unclosed, this maximal disk has a helicoidal end.
\label{thm-plateau-max}
\end{thm}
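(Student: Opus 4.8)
The plan is to transpose, \emph{mutatis mutandis}, the Euclidean strategy of~\cite{Desideri} (itself a cleaned-up form of Garnier's argument~\cite{Garnier28}) to the Lorentzian signature. The one genuinely new phenomenon is that the Gauss map of a \emph{spacelike} surface in $\L^3$ takes values in the hyperbolic disk (or its complement), so the monodromy governing the construction lives in the non-compact real form $\SU(1,1)\cong\SO(2,1)$ rather than in $\SU(2)\cong\SO(3)$, and singularities $\{|G|=1\}$ are unavoidable — which is exactly why the output is a maxface and not a regular maximal surface.

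\textbf{Step 1: Weierstrass data and reduction to a Fuchsian equation.} First I would record the Weierstrass-type representation: a simply connected maxface is, up to ambient isometry, the real part of a null holomorphic curve in $\C^3$, hence determined by a pair $\g$ consisting of a meromorphic function $G\colon\P\to\P$ (its Gauss map) and a holomorphic $1$-form $H$, the spacelike locus being $\{|G|\ne 1\}$ and the singular set being the analytic curve $\{|G|=1\}$. Parametrizing the prospective disk on the closed upper half-plane, a spacelike polygon $P$ with $n$ sides forces: (a) $n$ real points $x_1<\dots<x_n$ (the vertex preimages, together with $\infty$) at which $G$ and $H$ have prescribed local behaviour dictated by the exterior angles $\t_i$ of $P$; (b) on each boundary interval $G$ takes values on a fixed circle of $\P$ attached to the spacelike direction of the corresponding side. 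Exactly as in~\cite{Desideri}, I would write $G=y_1/y_2$ as a ratio of solutions of a second-order Fuchsian equation $\eq$ on $\P$ whose singular points are among the $x_i$ and $\infty$, with exponent differences read off from the $\t_i$, and then recover $H$ essentially as $\dd x/(y_2^2W)$. The structural difference with the Euclidean case is that conditions (a)–(b) now amount to requiring the monodromy representation $\pi_1\bigl(\P\ssm\{x_i\}\bigr)\to\SU(1,1)$, the isometry group of the hyperbolic Gauss image.

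\textbf{Step 2: Riemann--Hilbert problem and isomonodromic family.} Given $P$ in generic position, I would first solve the associated Riemann--Hilbert problem: produce a Fuchsian equation realizing the prescribed local exponents and the monodromy representation read off from the side directions of $P$. Over the non-compact group $\SU(1,1)$ this is more delicate than over $\SU(2)$, and genericity of $P$ is used precisely to discard the reducible/degenerate representations that obstruct a Fuchsian realization and to keep the accessory parameters finite. Then, holding the monodromy fixed, I would let the singular points move: this produces a finite-dimensional ($n-3$ parameters, one more in the unclosed case) isomonodromic family of equations $\eq$, governed by a Garnier (Schlesinger) system, each member of which yields a maxface spanning a polygon with the \emph{correct side directions} but a priori the wrong side lengths.

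\textbf{Step 3: Matching the edge lengths; the helicoidal end.} The heart of the proof is to show that the map $\Phi$ sending the deformation parameters (the $x_i$, the real scale of $H$, and, in the unclosed case, the residue parameter responsible for the end) to the tuple of edge lengths is onto the admissible length tuples. I would express each edge length as a period $\Re\int_{x_i}^{x_{i+1}}$ of the Weierstrass $1$-form, compute $\dd\Phi$ via the variational formulae of isomonodromy (derivatives of periods in the isomonodromy times), obtaining non-degeneracy off a thin set, and then analyze the boundary behaviour of $\Phi$ as two singular points coalesce or one escapes to $\infty$, showing that lengths degenerate to $0$ or $\infty$ in a controlled way; a continuity/properness-plus-degree argument — the Lorentzian analogue of the period relations used in~\cite{Desideri} — then provides a parameter value realizing the prescribed lengths, hence a maxface bounded by $P$. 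The requirement that $P$ close up is one further real (period) equation; when it fails by a nonzero amount, that residual period is produced by a parabolic (logarithmic) local datum at $\infty$, whose Weierstrass data is the local model of a helicoidal end, giving the last assertion. Throughout, $H$ is arranged to be nowhere zero where $G\ne\infty$, so the immersion has no branch point, while the zeros of the spacelike condition $|G|=1$ are the admissible singular curves of the maxface.

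\textbf{Main obstacle.} I expect Step 3 to be the crux, specifically surjectivity of $\Phi$ without the compactness of $\SU(2)$ that underpins the Euclidean a priori estimates. Non-compact monodromy means periods are no longer automatically bounded on compact parameter sets, so controlling the boundary strata of the deformation space — collisions of singular points, and their interaction with the helicoidal-end parameter — and ruling out escape of mass is where the real work lies; this is also the precise point at which the \emph{generic position} hypothesis on $P$ must be exploited.
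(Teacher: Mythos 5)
Your overall architecture --- Weierstrass data, the associated Fuchsian equation with monodromy in $\SU(1,1)$ generated by products of hyperbolic half-turns, a Riemann--Hilbert step, an isomonodromic (Schlesinger/Garnier) family parametrized by the moving singular points, and finally a boundary-of-moduli-space plus degree argument for the length map --- is exactly the paper's, so at that level the proposal is sound.

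There is, however, a genuine gap in your Step 3, and you have misidentified the crux. The edge lengths are \emph{not} periods $\Re\int_{t_i}^{t_{i+1}}$ of the Weierstrass form: the induced metric is $\left(|G|^2-|H|^2\right)^2|\dd x|^2$, and the singular set $\{|G|=|H|\}$ of a maxface meets the boundary edges, so each edge length is $\int_{t_i}^{t_{i+1}}\bigl||G|^2-|H|^2\bigr|\,\dd x$ with an integrand that changes sign finitely many times inside each edge. This modulus destroys, a priori, the analyticity in $t$ of the length-ratio function on which the entire degree/induction argument rests. The essential new work in the Minkowski setting is precisely to remove it: one rotates by $S_i\in\SU(1,1)$ so that the data $(g_i,h_i)$ is real on $(t_i,t_{i+1})$, shows that the zeros of $f_i=g_i^2-h_i^2$ on that interval are isolated, finite in number (and that number locally constant in $t$), and \emph{simple} (a double zero would force two solutions of the same second-order linear equation to coincide), and then applies the implicit function theorem to make these zeros holomorphic functions of $t$, so that each edge length becomes a finite signed sum of holomorphic period integrals. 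Without this step your map $\Phi$ is not even known to be continuous in the deformation parameters. By contrast, the non-compactness of $\SU(1,1)$ that you single out as the main obstacle is not where the difficulty lies: once the modulus is eliminated, the length-ratio function behaves exactly as its Euclidean analogue, and the boundary analysis (confluence of singular points, via Sato--Miwa--Jimbo) and the degree-one induction go through verbatim. A smaller inaccuracy: the Riemann--Hilbert problem for $2\times2$ systems always admits a Fuchsian solution, so genericity of $P$ (the class $\D^n$) is not needed to realize the monodromy; it is needed so that the induction on the number of vertices works at the faces $t_i=t_{i+1}$ of the simplex, where the sub-tuples of edge directions must again be generic.
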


Here we say that a possibly unclosed spacelike polygonal curve with $n+3$ edges is in generic position if the $(n+3)$-tuple of spacelike oriented directions of its edges $D=(D_1,\ldots,D_{n+3})$ belongs to the space $\D^n$ introduced in Section~\ref{section-max}, Definition~\ref{def-Dn}. It mainly sets that any two arbitrary directions are never parallel, and that the directions $D_{n+2}$ and $D_{n+3}$ are non-coplanar with any other direction $D_i$.

For every direction $D\in\D^n$ we introduce the space $\p^n_D$ of possibly unclosed polygonal curves with $n+3$ vertices and of oriented direction $D$, defined up to translations and homotheties of positive scale factor:  such polygonal curves are characterized by $n$ ratios of edge-lengths (between their $n+1$ finite lengths), and $\p^n_D$ is  thus isomorphic to $(0,+\infty)^n$. We also define the space $\X^n_D$ of maxfaces of disk-type with a polygonal boundary curve $P\in\p^n_D$, and with a helicoidal end if $P$ is unclosed, up to translations and homotheties of positive scale factor too. We can then paraphrase Theorem~\ref{thm-plateau-max}: it amounts  to proving that for any oriented direction $D\in\D^n$ the following map is surjective
\begin{align*}
\X^n_D &\longrightarrow \p^n_D\\
M & \longmapsto   \partial M.
\end{align*}
To this purpose we will fit Garnier's method to the case of maxfaces in Minkowski space: we establish an explicit one-to-one correspondence between an appropriate class of Fuchsian equations, denoted by $\E^n_D$, and the space $\X^n_D$, and we then prove that the following explicit composition is surjective
\[
\E^n_D \stackrel{1:1}{\longrightarrow}\X^n_D \longrightarrow \p^n_D \stackrel{\sim}{\longrightarrow}  (0,+\infty)^n.
\]

The space $\E^n_D$ is defined as follow. Since we consider maxfaces of disk-type, we can always assume, without loss of generality, that they are defined on the upper half-plane
\[
\C_+ = \{x\in \C \, , \; \Im (x)>0\}.
\]
Thanks to the spinor Weierstrass representation, such a maxface $X:\C_+\to\L^3$ is determined by two functions $G$ and $H$ holomorphic on the upper half-plane, without common zero and such that the moduli $|G|$ and $|H|$ do not coincide everywhere. The singularities of $X$ are the points where the equality $|G|=|H|$ holds. When the image of $X$ does not lie in a plane, the functions $G$ and $H$ are linearly independent, and are thus solutions of a unique second-order linear differential equation
\begin{equation}
y''+p(x)y'+q(x)y=0.
\tag{$E$}\label{E-intro}
\end{equation}
This is the equation associated with the maxface $X$. When the maxface $X$ represents a surface with a polygonal boundary curve, it appears that there is a nice correspondence between the geometry of $X$ and analytical properties of its associated equation~\eqref{E-intro}. We denote by $\E^n_D$ the space of all equations~\eqref{E-intro} defined by this way from a maxface $X\in\X^n_D$.

\bigskip

After generalities about maxfaces, and maxfaces with a polygonal boundary curve in Section~\ref{section-max}, and about Fuchsian equations in Section~\ref{section-equ-sys}, we provide in Section~\ref{section-equ-fu} a characterization of the space $\E^n_D$. The singularities of an  equation~\eqref{E-intro} in $\E^n_D$ are of two types: the pre-images of the vertices of the polygonal curve of its associated maxface $X$, which are real:
\[
 t_1<\cdots<t_n< t_{n+1}=0, \ t_{n+2}=1,\ t_{n+3}=\infty,
\]
and the umbilics, which are apparent singularities. By applying the Schwarz reflection principle, we prove that Equation~\eqref{E-intro} can be extended on the whole Riemann sphere $\P$, on which it is a real Fuchsian equation, and we determine how the Weierstrass data are transformed around the vertices $t_i$: we prove that the monodromy of Equation~\eqref{E-intro} is entirely determined by the oriented direction $D$ of the polygonal boundary curve.

Then, in Section~\ref{section-isomono}, for every direction $D\in\D^n$, we describe explicitly the isomonodromic space $\E^n_D$. Actually, it is more suitable to deal with Fuchsian systems instead of equations. By means of isomonodromic deformations, we obtain that $\E^n_D$ is parametrized by the position of the non-apparent singularities $t=\left(t_1, \ldots, t_n\right)$ varying in the simplex
\[
 \pi^n=\left\{ (t_1,\ldots,t_n)\in\R^n\ \big|\ t_1<\cdots<t_n<0\right\}.
\]
This also provides an explicit description of the space $\X^n_D=\left(X_D(t), t\in\pi^n\right)$, and of the family $\left(P_D(t), t\in\pi^n\right)\subset\p^n_D$ of polygonal curves of direction $D$ which bound at least one maxface of disk-type. The way these objects depend on $t$ is given by the Schlesinger system, a completely integrable system enjoying the Painlev\'e property.

Finally, to conclude, we study in Section~\ref{section-ratio} the $n$-tuple of length ratios of the polygons $P_D(t)$, denoted by $F_D(t)\in (0,+\infty)^n$. Solving the Plateau problem is equivalent to prove that the function $F_D:\pi^n\to (0,+\infty)^n$ is surjective. It constitutes the most difficult part of the proof of Theorem~\ref{thm-plateau-max}. It is based on the behavior of the solutions of the Schlesinger system at its fixed singularities, \ie at the boundary of the simplex $\pi^n$, which has been studied by Sato, Miwa and Jimbo~\cite{SMJ}. We then conclude by an induction on the the number $n+3$ of vertices, and by a degree argument.

\bigskip

We remind the main steps of the construction developed in~\cite{Desideri}, and we only provide the proofs which differ from the Euclidean case: they concern the expression of the monodromy, derived from the Schwarz reflection principle, and the expression of the length-ratio function $F_D$, which is more complicated because of the existence of singularities on maxfaces. We simplify the expression of $F_D$ in Minkowski space to prove that its behavior is the same as its analogue in Euclidean space. We then do not have to consider the technical details of the last part of the resolution, about its behavior at the boundary of $\pi^n$.

\bigskip

\noindent \textsc{Acknowledgments.} I would like to thank to my
advisor Fr\'ed\'eric H\'elein for helpful and interesting
discussions. I am also indebted to Rabah Souam for useful
comments.


\section{Maximal surfaces and maxfaces}
\label{section-max}

\subsection{Maximal surfaces}

The three-dimensional Minkowski space $\L^3$ is the affine space $\R^3$ endowed with the Lorentzian metric
\[
 \left\langle \ , \, \right\rangle = \dd X_1^2 + \dd X_2^2 - \dd X_3^2.
\]
We say that a vector $V\in \R^3\ssm\{0\}$ is spacelike, timelike or lightlike if $\left\langle V,V \right\rangle$ is respectively positive, negative or zero. The vector $0$ is spacelike by definition. We denote by
\[
 \mathbb{H}^2=\left\{ (X_1,X_2,X_3)\in\R^3\ \big\vert\ X_1^2+X_2^2-X_3^2=-1 \right\}
\]
the hyperbolic sphere in $\L^3$ of constant intrinsic curvature $-1$. The sphere $\mathbb H^2$ has two connected components $\mathbb H^2_+$ and $\mathbb H^2_-$, characterized by the sign of $X_3$. The stereographic projection $\pi$ of $\mathbb H^2$ is defined by
\[
 \pi:\overline\C\ssm\{|x|=1\} \to \mathbb H^2, \qquad \pi(x) = \left(\frac{2\Im x}{|x|^2-1},\frac{2\Re x}{|x|^2-1},\frac{|x|^2+1}{|x|^2-1}\right)
\]
and $\pi(\infty)=(0,0,1)$.

An immersion $X:\Sigma\to\L^3$ of a $2$-manifold $\Sigma$ into $\L^3$ is said to be \emph{spacelike} if its induced metric is Riemannian. The Gauss map $N:\Sigma\to\mathbb H^2$ of the immersion $X$ is then globally well-defined, with values into one of the connected components of $\mathbb H^2$. We can thus regard $\Sigma$ as a Riemann surface and $X$ as a conformal immersion.

A spacelike immersion $X:\Sigma\to\L^3$ is said to be \emph{maximal} if its mean curvature vanishes identically. Since we are currently interested in disk-type surfaces only, for simplicity we give the spinor Weierstrass representation only for maximal immersions defined on $\C_+$. See~\cite{KuSch} for a more general setting, and for more details on the spinor representation in Euclidean case. We denote by $x$ the standard coordinate on $\C_+$.

\begin{thm}
Let $x_0$ be a point of the upper half-plane  $\C_+$.

For any maximal conformal immersion $X:\C_+\to\L^3$ there exist a point $X_0\in\R^3$, and two holomorphic functions $G$ and $H$ on $\C_+$ such that $|G(x)|\neq|H(x)|$ for all $x\in\C_+$ satisfying
\begin{equation}
 X(x) = X_0 + \Re \int_{x_0}^x
\Big(
 H^2 - G^2,
 i\left(G^2 + H^2\right),
 2iGH
\Big)
\dd x.
\label{rep-W-regular}
\end{equation}

Conversely, any holomorphic functions $G$ and $H$ on $\C_+$ such that $|G(x)|\neq|H(x)|$ for all $x\in\C_+$ define by~\eqref{rep-W-regular} a maximal conformal immersion $X:\C_+\to\L^{3}$.
\label{thm-W1}
\end{thm}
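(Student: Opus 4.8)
The plan is to set up the classical spinor Weierstrass machinery with the Lorentzian quadratic form $\dd X_1^2+\dd X_2^2-\dd X_3^2$ in place of the Euclidean one. For a smooth map $X:\C_+\to\R^3$ write $\phi=(\phi_1,\phi_2,\phi_3)$ with $\phi_j=2\,\partial X_j/\partial x$, so that $X(x)=X(x_0)+\Re\int_{x_0}^x\phi\,\dd x$. Then I would record the two standard translations: first, $X$ is a spacelike conformal immersion if and only if $\partial X$ is isotropic for the complexified Lorentzian form, $\phi_1^2+\phi_2^2-\phi_3^2=0$, and its conformal factor is positive, $|\phi_1|^2+|\phi_2|^2-|\phi_3|^2>0$ (this last quantity being, up to a constant, $\langle\partial X,\overline{\partial X}\rangle$, i.e.\ the induced metric written in the coordinate $x$); second, since $\L^3$ is flat one has the Beltrami identity $\Delta_g X=2\vec H$, hence in conformal coordinates $\Delta_{\mathrm{eucl}}X=0\iff\vec H=0$, so $X$ is maximal precisely when each $X_j$ is harmonic, i.e.\ when each $\phi_j$ is holomorphic. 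Conversely, a triple of holomorphic functions $\phi_j$ on the simply connected domain $\C_+$ satisfying these two conditions integrates to a well-defined $X=X_0+\Re\int_{x_0}^x\phi\,\dd x$ (a holomorphic $1$-form on a simply connected domain is exact), which is then a maximal conformal spacelike immersion by reversing the above. Thus the theorem reduces to an equivalence between such triples $\phi$ and pairs $(G,H)$ of holomorphic functions with $|G|\neq|H|$ everywhere.

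For the spinorization, set $A=-\tfrac12(\phi_1+i\phi_2)$, $B=\tfrac12(\phi_1-i\phi_2)$, $C=-\tfrac{i}{2}\phi_3$, so that the requested relations $\phi_1=H^2-G^2$, $\phi_2=i(G^2+H^2)$, $\phi_3=2iGH$ are exactly $G^2=A$, $H^2=B$, $GH=C$, and the isotropy relation $\phi_1^2+\phi_2^2-\phi_3^2=0$ becomes $AB=C^2$. The key algebraic identity, obtained by a one-line expansion using the parallelogram law, is
\[
|H^2-G^2|^2+|G^2+H^2|^2-|2GH|^2=2\bigl(|G|^2-|H|^2\bigr)^2,
\]
which shows both that the conformal factor produced by $(G,H)$ equals $2(|G|^2-|H|^2)^2$ and that the positivity condition $|\phi_1|^2+|\phi_2|^2-|\phi_3|^2>0$ is equivalent to $|G|\neq|H|$ everywhere; this already settles the converse direction once $\phi$ is defined from $(G,H)$. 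In the direct direction, starting from $\phi$: the positivity condition forces $A$ and $B$ to have no common zero (if $A(x_0)=B(x_0)=0$ then $\phi_1(x_0)=\phi_2(x_0)=0$, hence $\phi_3(x_0)=0$ by isotropy, contradicting positivity), and then $AB=C^2$ forces every zero of $A$, and of $B$, to be of even order; since $\C_+$ is simply connected, $A$ and $B$ admit holomorphic square roots $G$ and $H$. Finally $(GH)^2=AB=C^2$ gives $(GH-C)(GH+C)\equiv0$, so one factor vanishes identically ($\C_+$ is a domain), and after replacing $(G,H)$ by $(-G,-H)$ if necessary we get $GH=C$; the resulting $(G,H)$ are holomorphic on $\C_+$ with $|G|\neq|H|$.

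The statement is essentially the Lorentzian counterpart of the spinor Weierstrass representation recalled in~\cite{KuSch}, and all the steps are elementary. The only point needing some care --- and hence the main (minor) obstacle --- is the global extraction of the holomorphic square roots $G$ and $H$ from $A$ and $B$: this rests on the simple connectedness of $\C_+$ together with the parity-of-zeros argument, which is exactly where the immersion hypothesis $|G|\neq|H|$ is used. With $(G,H)$ in hand, checking that the representation formula~\eqref{rep-W-regular} returns $X$ up to the additive constant $X_0=X(x_0)$ is a direct integration, and the constant $X_0$ together with the sign of the pair $(G,H)$ is the only freedom.
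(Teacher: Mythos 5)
Your argument is correct and is the standard spinorization of the Weierstrass data; note that the paper does not prove Theorem~\ref{thm-W1} at all but treats it as known, referring to~\cite{KuSch}, so there is no in-paper proof to compare against. The reduction to holomorphic isotropic data $\phi$ with $|\phi_1|^2+|\phi_2|^2-|\phi_3|^2>0$, the parallelogram identity giving the conformal factor $2\left(|G|^2-|H|^2\right)^2$, and the extraction of square roots of $A=G^2$ and $B=H^2$ on the simply connected domain $\C_+$ via the even-order-of-zeros argument are all sound. One small slip at the end: replacing $(G,H)$ by $(-G,-H)$ leaves the product $GH$ unchanged, so it cannot convert $GH=-C$ into $GH=C$; you must instead flip the sign of exactly one of $G$ or $H$, which still preserves $G^2=A$ and $H^2=B$ and so completes the argument.
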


We call the couple $\g$ the (spinor) Weierstrass data of the immersion $X$. The stereographic projection of the Gauss map of $X$ is then given by
\[
 g = \pi\circ N = -\frac GH.
\]
The induced metric and the Hopf differential are expressed in terms of the Weierstrass data by
\begin{equation}
 \dd s^2=\left(|G|^2-|H|^2 \right)^2|\dd x|^2 , \qquad Q=i\left( GH'-HG'\right) \dd x^2
\label{hopf-metric}
\end{equation}
where $\, '$ denotes the differentiation with respect to $x$.

\subsection{Maxfaces}

Unfortunatly, we can not control the condition
\[
 \forall x\in\C_+ \quad |G(x)|\neq|H(x)|
\]
of Theorem~\ref{thm-W1} by the equation associated by Garnier's method with the maximal conformal immersion $X$ of Weierstrass data $\g$. Following Garnier's method to solve the Plateau problem, we are thus led to construct maximal surfaces with singularities. We can be more precise about these singularities.

In~\cite{EstudilloRomero}, F. Estudillo and A. Romero defined a notion of \emph{generalized maximal surface} as follow. Let $X:\Sigma\to\L^3$ be a differentiable map on a Riemann surface $\Sigma$. The map $X$ is then called a generalized maximal surface if we have
\begin{itemize}
 \item $\Phi:=\partial X/\partial x : \Sigma\to\C^3$ is holomorphic ;
 \item $\left(\Phi_1\right)^2 + \left(\Phi_2\right)^2 - \left(\Phi_3\right)^2 = 0$ ;
 \item $\left|\Phi_1\right|^2 + \left|\Phi_2\right|^2 - \left|\Phi_3\right|^2$ is not identically zero.
\end{itemize}
The singular points are the points where $\left|\Phi_1\right|^2 + \left|\Phi_2\right|^2 - \left|\Phi_3\right|^2 = 0$ holds. They are of two types: the set $B$ of the isolated zeros of the holomorphic function $\Phi$, and the set $A$ of the points where $|g|=1$. In general $A\cap B\neq\emptyset$. Points in $B\ssm (A\cap B)$ are isolated, whereas points in $A$ are not.

In terms of the spinor Weierstrass representation, a map on $\C_+$ satisfying the first two previous conditions can always be written in the form~\eqref{rep-W-regular} where the functions $G$ and $H$ are holomorphic. Then we have $\left|\Phi_1\right|^2 + \left|\Phi_2\right|^2 - \left|\Phi_3\right|^2 = |G|^2-|H|^2 $: singular points in $B$ are the common zeros of $G$ and $H$, and points in $A$ are those where we have $ \left|G / H\right| =1$.

In~\cite{UmeharaYamada06}, M. Umehara and K. Yamada introduced \emph{maxfaces}, which are generalized maximal surfaces in the sense of Estudillo and Romero satisfying $B=\emptyset$. The terminology is not so clear. Estudillo and Romero call branch point any singular point of a generalized maximal surface. But following Umehara and Yamada, we will call branch points the points belonging to the set $B$: maxfaces are in this sense generalized maximal surfaces without branch point. Umehara and Yamada actually introduced maxfaces as projections into $\L^3$ of null holomorphic immersions into $\C^3$. They proved in~\cite{UmeharaYamada06} an Osserman type inequality for maxfaces. Umehara and Yamada~\cite{UmeharaYamada06}, Fujimori, Saji, Umehara, and Yamada~\cite{UmeharaYamada08} and Fujimori, Rossman, Umehara, Yamada, and Yang~\cite{RossmanUmeharaYamada} studied singularities of maxfaces, and constructed new examples.

We have the following spinor Weierstrass representation for maxfaces of disk-type.

\begin{thm}[The spinor Weierstrass representation for maxfaces]
Let $x_0$ be a point of the upper half-plane  $\C_+$.

For any maxface $X:\C_+\to\L^3$ there exist a point $X_0\in\R^3$, and two holomorphic functions $G$ and $H$ on $\C_+$ without common zero and such that $|G|-|H|$ is not identically zero on $\C_+$ satisfying
\begin{equation}
 X(x) = X_0 + \Re \int_{x_0}^x
\Big(
 H^2 - G^2,
 i\left(G^2 + H^2\right),
 2iGH
\Big)
\dd x.
\label{rep-W}
\end{equation}

Conversely, any holomorphic functions $G$ and $H$ on $\C_+$ without common zero and such that $|G|-|H|$ is not identically zero on $\C_+$ define by~\eqref{rep-W-regular} a maxface $X:\C_+\to\L^{3}$.
\label{thm-W2}
\end{thm}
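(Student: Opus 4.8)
The plan is to transcribe the classical spinor Weierstrass correspondence (Theorem~\ref{thm-W1}, cf.~\cite{KuSch}) to the maxface setting; the only genuinely new points are the Lorentzian signature of the target and the branch-point-free condition $B=\emptyset$.

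\emph{From a maxface to its data.} Let $X:\C_+\to\L^3$ be a maxface, that is, a generalized maximal surface in the sense of Estudillo and Romero~\cite{EstudilloRomero} whose set $B$ of branch points is empty. Writing $\Phi=(\Phi_1,\Phi_2,\Phi_3):=\partial X/\partial x$, this means $\Phi$ is holomorphic on $\C_+$, that $\Phi_1^2+\Phi_2^2-\Phi_3^2\equiv 0$, that $|\Phi_1|^2+|\Phi_2|^2-|\Phi_3|^2$ is not identically zero, and --- since $B$ is exactly the zero set of the holomorphic map $\Phi$ --- that $\Phi$ never vanishes. Rewrite the quadratic relation as $(\Phi_1+i\Phi_2)(\Phi_1-i\Phi_2)=\Phi_3^2$. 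First I would note that $\Phi_1+i\Phi_2$ and $\Phi_1-i\Phi_2$ have no common zero: a common zero would force $\Phi_1=\Phi_2=0$ and then $\Phi_3^2=0$, contradicting $\Phi\neq 0$. Hence at any point at most one of the two factors vanishes, and comparing vanishing orders in $(\Phi_1+i\Phi_2)(\Phi_1-i\Phi_2)=\Phi_3^2$ shows that each of $\Phi_1\pm i\Phi_2$ has only zeros of even order. Since $\C_+$ is simply connected we may choose a holomorphic $G$ on $\C_+$ with $G^2=-(\Phi_1+i\Phi_2)$. Assuming $G\not\equiv 0$, set $H:=\Phi_3/(iG)$ off the isolated zeros of $G$; at a zero of $G$ of order $m$ the factor $\Phi_1-i\Phi_2$ is nonzero, so $\Phi_3^2$ has order $2m$ and $\Phi_3$ has order $m$, whence $H$ extends holomorphically and is nonzero there. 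One then checks directly that $H^2=\Phi_1-i\Phi_2$ and $iGH=\Phi_3$, so $2\Phi=(H^2-G^2,\,i(G^2+H^2),\,2iGH)$, and integrating recovers~\eqref{rep-W} with $X_0=X(x_0)$. By construction $G$ and $H$ have no common zero, and since $|\Phi_1|^2+|\Phi_2|^2-|\Phi_3|^2$ is a positive multiple of $(|G|^2-|H|^2)^2$ (the identity underlying the metric formula~\eqref{hopf-metric}), the nonvanishing hypothesis forces $|G|-|H|\not\equiv 0$.

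\emph{The exceptional case.} If $G\equiv 0$, then $\Phi_1+i\Phi_2\equiv 0$, hence $\Phi_3^2\equiv 0$ and $\Phi=\Phi_1(1,i,0)$ with $\Phi_1$ nowhere zero, so $X$ parametrizes an open subset of a spacelike plane; taking $G\equiv 0$ and $H$ a holomorphic square root of $\Phi_1-i\Phi_2$, which exists and is nowhere zero on the simply connected domain $\C_+$, again yields data as claimed. The subcase $H\equiv 0$ is symmetric.

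\emph{Converse.} Given holomorphic $G,H$ on $\C_+$ with no common zero and $|G|-|H|\not\equiv 0$, the integrand in~\eqref{rep-W} is holomorphic, so by simple connectedness of $\C_+$ formula~\eqref{rep-W} defines a single-valued map $X:\C_+\to\L^3$, which is real-valued since we take the real part. With $\Phi=\partial X/\partial x$ equal to half that integrand, a one-line computation gives $\Phi_1^2+\Phi_2^2-\Phi_3^2=0$, while $|\Phi_1|^2+|\Phi_2|^2-|\Phi_3|^2$ is a positive multiple of $(|G|^2-|H|^2)^2$, hence not identically zero; thus $X$ is a generalized maximal surface. Finally $\Phi(x)=0$ forces $H(x)^2=G(x)^2=0$, i.e. $G(x)=H(x)=0$, which never occurs, so $B=\emptyset$ and $X$ is a maxface.

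\emph{Main obstacle.} The only delicate step is the first one: proving that the factors $\Phi_1\pm i\Phi_2$ have only even-order zeros --- so that holomorphic square roots exist on $\C_+$ --- and that the candidate $H=\Phi_3/(iG)$ extends holomorphically across the zeros of $G$ without producing a common zero with $G$; once this local order-counting is in place, together with the degenerate planar case, the remainder is the formal transcription of the Euclidean argument of~\cite{KuSch}.
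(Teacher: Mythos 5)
Your argument is correct: the factorization $(\Phi_1+i\Phi_2)(\Phi_1-i\Phi_2)=\Phi_3^2$, the even-order-of-zeros count made possible by $B=\emptyset$, the holomorphic square root on the simply connected domain $\C_+$, the extension of $H=\Phi_3/(iG)$ across zeros of $G$, the planar case $G\equiv0$ (or $H\equiv0$), and the identity $|\Phi_1|^2+|\Phi_2|^2-|\Phi_3|^2=\tfrac12\left(|G|^2-|H|^2\right)^2$ for both directions are exactly what is needed, and the converse is handled correctly as well. The paper itself states Theorem~\ref{thm-W2} without proof, as the standard spinor Weierstrass representation (cf.~\cite{KuSch},~\cite{UmeharaYamada06}), and your proof is precisely this standard argument, so there is nothing to add.
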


The induced metric and the Hopf differential of a maxface are still expressed in terms of its Weierstrass data by~\eqref{hopf-metric}.

It is sometimes convenient to describe the space $\L^3$ in terms of $2\times2$ matrices, by identifying each vector $X=(X_1,X_2,X_3)^t\in\L^3$ with the Hermitian matrix $\w X\in\MM(2,\C)$ defined by
\begin{equation}
 \w X =
 \begin{pmatrix}
  X_3      & X_2+iX_1\\
  X_2-iX_1 & X_3
 \end{pmatrix}.
\label{(X-tilde)}
\end{equation}
This induces an isomorphism from $\L^3$ into the space $L^{2,1}$ of Hermitian matrices of the form~\eqref{(X-tilde)}. The metric of $\L^3$ is then given by 
\[
\left\langle X, Y\right\rangle = \frac12 \tr \w X \w Y, \quad \text{and} \quad \left\langle X, X\right\rangle = -\det \w X.
\]
The Lorentz group $\OO(2,1)$ is the group of linear isometries of $\L^3$: it is constituted of the matrices $M\in \MM(3,\R)$ satisfying
\[
 MQM^t = Q
\qquad \text{where }
Q = \text{diag} (1,1,-1) .
\]
The group $\OO(2,1)$ has four connected components. The restricted Lorentz group $\SO^+(2,1)$ is the connected component of the identity, it is formed of all isometries preserving orientations of both space and time. The eigenvalues of a matrix $R\in\SO^+(2,1)$ are of the form $(1,e^{\varphi},e^{-\varphi})$ or $(1,e^{i\varphi},e^{-i\varphi})$, with $\varphi\in\R$, according to whether the axis of $R$ is spacelike or timelike.

Let us recall that the group $\SU(1,1)$ is the group of matrices $A\in \MM(2,\C)$ of determinant equal to $1$ verifying
\[
 \overline A^t
 \begin{pmatrix}
  1 & 0\\
  0 & -1
 \end{pmatrix}
 A =
 \begin{pmatrix}
  1 & 0\\
  0 & -1
 \end{pmatrix} .
\]
It is isomorphic to $\SL(2,\R)$, and it is described as follow
\[
 \SU(1,1) = \left\{\left.
\begin{pmatrix}
 a & b\\
 \overline b & \overline a
\end{pmatrix}
\ \right\vert \ a,b\in \C,\ \ a\overline a - b\overline b = 1
\right\}.
\]
For every matrix $A\in \SU(1,1)$, the following map is well-defined
\begin{align*}
 R_A :\ L^{2,1} &\to L^{2,1}\\
     M  &\mapsto \overline A^t M A
\end{align*}
and it is a linear isometry of $L^{2,1}$. By identifying $\SO^+(L^{2,1})$ and $\SO^+(2,1)$, we obtain the homomorphism
\begin{align}
 R :\ \SU(1,1) &\to \SO^+(2,1)\label{spin-covering}\\
     A  &\mapsto R_A\nonumber
\end{align}
which is the double universal cover of $\SO^+(2,1)$ by the group $Spin(2,1)\simeq \SU(1,1)$. We can write it as follow. If the direct isometry $R\in\SO^+(2,1)$ has a timelike unitary axis $\d=(\d_1,\d_2,\d_3)$, and an ``angle'' $\varphi$, \ie has $e^{\pm i\varphi}$ for eigenvalues, its pre-images by~\eqref{spin-covering} are $A$ and $-A$, where
\[
A = \cos\left( \frac\varphi2 \right) \I_2 + \sin \left( \frac\varphi2 \right)
\begin{pmatrix}
i\d_3 & \d_1+i\d_2\\
\d_1-i\d_2 & -i\d_3
\end{pmatrix}.
\]
If $R\in\SO^+(2,1)$ has a spacelike unitary axis $\d=(\d_1,\d_2,\d_3)$, and $e^{\pm \varphi}$ for eigenvalues, its pre-images by~\eqref{spin-covering} are $A$ and $-A$, where
\[
A = \cosh\left( \frac\varphi2 \right) \I_2 + \sinh \left( \frac\varphi2 \right)
\begin{pmatrix}
i\d_3 & \d_1+i\d_2\\
\d_1-i\d_2 & -i\d_3
\end{pmatrix}.
\]
The following proposition is the analogue of the well-known property of spinor representations into Euclidean space.

\begin{prop}
Let $X:\Sigma\to\L^3$ be a maxface of Weierstrass data $\g$, and $A$ a matrix in $\SU(1,1)$. Then the vector $\g A$ constitutes the Weierstrass data of the maxface $ R_A\left( X\right) $ which is the image of $X$ by the isometry $R_A\in \SO^+(2,1)$.
\label{prop-spin}
\end{prop}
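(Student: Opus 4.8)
The plan is to verify the claim by direct computation using the explicit spinor Weierstrass representation~\eqref{rep-W}. Write $A = \begin{pmatrix} a & b \\ \overline b & \overline a \end{pmatrix} \in \SU(1,1)$, so that $a\overline a - b\overline b = 1$, and set $(\w G,\w H) = \g A = (aG + \overline b H,\ bG + \overline a H)$. First I would check that $(\w G,\w H)$ is a legitimate pair of Weierstrass data for a maxface: since $A$ is invertible and $(G,H)$ is holomorphic with no common zero, the same holds for $(\w G,\w H)$, and a short computation shows $|\w G|^2 - |\w H|^2 = |aG+\overline b H|^2 - |bG+\overline a H|^2 = (|a|^2-|b|^2)(|G|^2-|H|^2) = |G|^2 - |H|^2$, so this quantity is not identically zero. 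Hence by Theorem~\ref{thm-W2} the pair $(\w G,\w H)$ defines a maxface $\w X:\C_+\to\L^3$.

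Next I would compare the holomorphic integrands. The representation~\eqref{rep-W} is the real part of the integral of the $\C^3$-vector $\Phi = (H^2 - G^2,\ i(G^2+H^2),\ 2iGH)$; it is most transparent to translate this into the matrix language~\eqref{(X-tilde)}. A standard bookkeeping identity shows that the Hermitian-matrix form of $\Phi$ is $\w\Phi = 2\, \overline{\mathbf v}^t\,\mathbf v$ (up to a fixed constant convention), where $\mathbf v = (G,\ H)$ is regarded as a row vector — equivalently $\w\Phi = 2 \begin{pmatrix} \overline G \\ \overline H \end{pmatrix}(G,H)$, whose entries one reads off against~\eqref{(X-tilde)} to recover the three components of $\Phi$. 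Replacing $(G,H)$ by $(G,H)A$ then replaces $\w\Phi$ by $\overline A^t\, \w\Phi\, A$, i.e. by $R_A(\w\Phi)$. Since $R_A$ is $\C$-linear (it is linear over $\R$ on $\L^3$, but being given by the fixed matrix $\overline A^t(\cdot)A$ it commutes with multiplication by $i$ and hence extends $\C$-linearly to $\C^3 \cong L^{2,1}\otimes\C$), it commutes with the holomorphic integration $\int_{x_0}^x$ and with $\Re$. Therefore $\w X(x) = \w X_0 + \Re\int_{x_0}^x R_A(\Phi)\,\dd x = R_A\big(X_0 + \Re\int_{x_0}^x \Phi\,\dd x\big) + (\w X_0 - R_A X_0) = R_A(X(x))$ after choosing the base point $\w X_0 = R_A(X_0)$. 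This is exactly the assertion.

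Finally, for completeness I would note the compatibility with the Gauss map as a sanity check: $g = -G/H$ transforms under $(G,H)\mapsto (G,H)A$ by the Möbius transformation $g \mapsto -\frac{aG+\overline b H}{bG+\overline a H} = \frac{ag - \overline b}{-bg + \overline a}$, which is precisely the action on $\overline\C\ssm\{|x|=1\}$ induced via the stereographic projection $\pi$ by $R_A \in \SO^+(2,1)$ on $\mathbb H^2$; this is the classical fact recalled just before the proposition for the Euclidean analogue.

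I do not expect a genuine obstacle here: the only point requiring care is the correct normalization constant relating $\Phi$ to $\overline{\mathbf v}^t\mathbf v$ in~\eqref{(X-tilde)}, and the observation that $R_A$, though only $\R$-linear as an isometry of $\L^3$, is induced by a fixed complex matrix and hence passes through the holomorphic integration. Everything else is the bilinear identity $|aG+\overline bH|^2 - |bG+\overline aH|^2 = |G|^2-|H|^2$ together with the transformation rule $\w\Phi \mapsto \overline A^t\w\Phi A$, which are immediate.
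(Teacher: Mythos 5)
Your overall strategy is the same as the paper's: pass to the $2\times2$ Hermitian model \eqref{(X-tilde)} and show that the holomorphic integrand is conjugated by $A$. But the ``standard bookkeeping identity'' on which everything rests is wrong, and the error hides exactly the one non-trivial step. Writing $\Psi=(H^2-G^2,\ i(G^2+H^2),\ 2iGH)$ and applying \eqref{(X-tilde)} entrywise, one finds $\Psi_3=2iGH$, $\Psi_2+i\Psi_1=2iH^2$, $\Psi_2-i\Psi_1=2iG^2$, so that
\[
\w\Psi \;=\; 2\begin{pmatrix}0&i\\ i&0\end{pmatrix}\begin{pmatrix}G\\ H\end{pmatrix}(G,\ H)\;=\;2J\,Y^tY,\qquad Y=(G,H),
\]
and \emph{not} $2\,\overline{Y}^t Y$: the matrix $\overline Y^tY=\bigl(\begin{smallmatrix}|G|^2&\overline GH\\ G\overline H&|H|^2\end{smallmatrix}\bigr)$ is not holomorphic and does not even have equal diagonal entries, so it cannot be the integrand of \eqref{rep-W} in the model \eqref{(X-tilde)}. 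With your (false) formula the rule $\w\Psi\mapsto\overline A^t\,\w\Psi\,A$ under $Y\mapsto YA$ would be tautological; with the correct formula one gets $2J A^tY^tYA$, and to identify this with $\overline A^t(2JY^tY)A$ you need $\overline A^t J=JA^t$, equivalently the identity $JA=\overline AJ$ valid for $A\in\SU(1,1)$ (equation \eqref{SU} in the paper). This is precisely where the hypothesis $A\in\SU(1,1)$ enters the computation, and it is absent from your argument. A second, smaller imprecision: $\w X$ is not the entrywise real part of $\int\w\Psi\,\dd x$ but its Hermitian part $N+\overline N^{\,t}$ with $N=\tfrac12\int\w\Psi\,\dd x$; this operation does commute with $M\mapsto\overline A^tMA$, but that is what you should say rather than that $R_A$ ``commutes with $\Re$''.

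The remaining ingredients of your write-up are fine: the verification that $(G,H)A$ is admissible Weierstrass data (no common zeros, and $|aG+\overline bH|^2-|bG+\overline aH|^2=|G|^2-|H|^2$) is correct, as is the Möbius-action sanity check on $g=-G/H$. Once you replace the incorrect identity by $\w\Psi=2JY^tY$ and insert the step $JA=\overline AJ$, your proof becomes the paper's proof.
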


\begin{proof}
We use the previous description of $\L^3$. A direct computation shows that
 \[
 \w X(x) = i\int_{x_0}^x
 \begin{pmatrix}
  GH & H^2\\
  G^2 & GH
 \end{pmatrix}
 \dd x
 -i\int_{x_0}^x
 \begin{pmatrix}
  \overline{GH}  & \overline G^2\\
  \overline H^2  & \overline{GH}
 \end{pmatrix}
 \dd\bar x,
\]
which can be written as
\[
 \w X(x) = \int_{x_0}^x J\cdot Y^t\cdot Y \dd x
 - \int_{x_0}^x
 \overline Y^t\cdot \overline Y\cdot J
 \dd\bar x,
\]
where we have set $Y=\g$, and
$
 J =
\begin{pmatrix}
 0 & i\\
 i & 0
\end{pmatrix}.
$
From the identity
\begin{equation}
 \forall A\in \SU(1,1) \qquad J A= \overline A J,
\label{SU}
\end{equation}
we finally get
\[
 \overline A^t \w X(x) A =
 \int_{x_0}^x
 J\cdot \left(YA\right)^t \cdot \left(YA\right)
 \dd x
 -\int_{x_0}^x
 \overline{\left(YA\right)}^t\cdot \overline{\left(YA\right)}\cdot J
 \dd\bar x.
\]
\end{proof}

\subsection{Maxfaces with a polygonal boundary curve}

We introduce the appropriate spaces and notations for the maxfaces we intend to construct, and for their polygonal boundary curves. First of all,  there are some natural assumptions we should state on the polygonal boundary curves we consider, and some others less natural that we will need during the proof.

Let us consider a polygonal curve $P\subset\R^3$ with $n+3$ vertices $a_1,\ldots, a_{n+3}$. We denote by $D_i$ the oriented direction of the straight line $(a_i,a_{i+1})$, and by $u_i$ a direction vector inducing the orientation of $D_i$. Moreover, we authorize the polygon $P$ to be \emph{possibly unclosed}, that is to say  the two half-lines derived from $a_1$ and $a_{n+2}$, and of respective oriented directions $-D_{n+3}$ and $D_{n+2}$ do not necessary intersect each other. When the curve is unclosed, we then say that  the vertex $a_{n+3}$ is at infinity.

We say that the polygonal curve $P$ is \emph{non-degenerate} if the cross products $u_{i-1}\times u_i$ are all non-zero ($i=1,\ldots,n+3$, the indices are always defined modulo $n+3$). We can then define at each vertex $a_i$:
\begin{itemize}
 \item the measure $\t_i\pi$ of the angle between $u_i$ and $u_{i-1}$ (the exterior angle of $P$) such that $0<\t_i<1$,
 \item a normal vector to the polygon $P$: $v_i = -u_{i-1}\times u_i$.
\end{itemize}
We say that a non-degenerate polygonal curve $P$ is \emph{spacelike} if
\begin{itemize}
 \item the direction vectors are spacelike: $\left\langle u_i ,u_i \right\rangle>0$ ($i=1,\ldots,n+3$), and if
 \item the normal vectors are timelike: $\left\langle v_i ,v_i \right\rangle<0$ ($i=1,\ldots,n+3$).
\end{itemize}
When the curve $P$ is spacelike, we always assume that for every $i=1,\ldots,n+3$
\[
\left\langle u_i ,u_i \right\rangle=1, \quad \text{and} \quad \left\langle v_i ,v_i \right\rangle=-1.
\]

The results of Sections~\ref{section-equ-fu} and~\ref{section-isomono} concern all non-planar, non-degenerate spacelike polygonal curves. But to end the resolution of the Plateau problem we will need at Section~\ref{section-ratio} stronger assumptions on the polygonal boundary curves. Since we use an induction on the number $n+3$ of vertices, we have to introduce a family of polygons such that the conditions on the edge directions are passed on to any subsets of directions.

\begin{defn}
We define the set $\D^n$ of $(n+3)$-tuples $D=(D_1,\ldots,D_{n+3})$ of spacelike oriented directions in $\L^3$ satisfying the following properties
\begin{itemize}
 \item any two directions $D_i$ and $D_j$ ($i\neq j$) are non-collinear, and their common normal direction is timelike,
 \item for any $i\neq n+1,\ n+2$, the directions $D_i$, $D_{n+1}$ and $D_{n+2}$ are non-coplanar.
\end{itemize}
\label{def-Dn}
\end{defn}

For every $(n+3)$-gon, we call the $(n+3)$-tuple $D$ of oriented directions of its edges its \emph{oriented direction}. We introduce the following spaces of polygonal curves.

\begin{nota}
For every $D\in\D^n$ we denote by $\p^n_D$ the quotient of the space of possibly unclosed $(n+3)$-gons in $\R^3$ of oriented direction $D$ by the group of translations and homotheties of positive scale factor:
\[
 \p^n_D := \left\{ (n+3)\text{-gons of oriented direction } D\right\}  \Big/_{\displaystyle \R^3\times\R^*_+ } .
\]
\end{nota}

Of course the space $\p^n_D$ contains all \emph{closed} polygonal curves of oriented direction $D$, when there exist any. Since there is no closure condition, a possibly unclosed $(n+3)$-gon with known oriented direction $D$ is characterized by the position of one vertex and the values of its first $n+1$ edge lengths --- which are always finite. A coordinate system on the space $\p^n_D$ is thus given by any choice of $n$ edge-length ratios between these $n+1$ lengths. We choose the following one:
\begin{equation}
 (r_1,\ldots,r_n):\p^n_D\to (0,+\infty)\,^n, \qquad r_i(P)=\frac{||a_ia_{i+1}||}{||a_{n+1}a_{n+2}||},
\label{coord-poly}
\end{equation}
where $a_1,\ldots,a_{n+3}$ are the vertices in $\R^3\cup\{\infty\}$ of any representative of $P\in\p^n_D$. We get the isomorphism
\[
 \p_D^n \simeq (0,+\infty)\,^n.
\]

The maxfaces we will construct in next sections via Garnier's method lie in the following spaces. Since we consider maxfaces of disk-type, we always assume without loss of generality that they are defined on the upper half-plane $\C_+$.

\begin{nota}
For every $D\in\D^n$ we denote by $\X^n_D$ the quotient by the group of translations and homotheties of positive scale factor of the space of maxfaces of disk-type $X:\C_+\to \L^3$ such that
\begin{itemize}
\item $X$ continuously extends onto $\overline\R=\R\cup\{\infty\}$, on which it homeomorphically parametrizes a polygon $P$ such that  $[P]\in\p^n_D$, and $X$ has no boundary branch point, except possibly at the vertices of $P$,
\item $X$ has a helicoidal end if $P$ is unclosed, and
\item $X$ is locally embedded around the vertices.
\end{itemize}
\end{nota}

Let us consider a maxface $X:\C_+\to \L^3$ such that $[X]\in\X^n_D$.  We denote by $P$ its polygonal boundary curve, and by $Y_0:=(G,H):\C_+\to \C^2$ its Weierstrass data, which are holomorphic on the upper half-plane $\C_+$. We denote by
\[
    t_1<\cdots<t_{n+3}
\]
the points in $\overline\R$ which are the pre-images by $X$ of the vertices of the polygonal curve $P$. By composing the map $X$ with a homography, we can assume that
\[
    t_{n+1}=0, \quad t_{n+2}=1, \quad t_{n+3}=\infty,
\]
which entirely determines $X$. The function $Y_0(x)$ is then unique up to the sign. From the first assumption on $X$, we know that the function $Y_0(x)$ is continuous on the intervals $(t_i,t_{i+1})$ ($i=1,\ldots,n+3$). This is a natural assumption if we want  $X$ to extend trough its edges, thanks to the Schwarz reflection principle. Under this assumption, the Gauss map $N(x)$ of the maxface $X$ admits a limit at each vertex of $P$. We denote by $N(t_i)$ the limit Gauss vector at $x=t_i$, which lies in the hyperbolic sphere $\mathbb H^2$ and satisfies $N(t_i)=\pm v_i$. We thus know that the maxface $X$ has no singularity on a neighborhood of the vertices.

Following the previous notations on the polygonal boundary curve $P$, the third assumption on $X$ means that its angle at $a_i$ is $(1-\t_i)\pi\in(0,\pi)$ or $(1+\t_i)\pi\in(\pi,2\pi)$. As we will see in Section~\ref{section-equ-fu}, this infers that the branch points of $X$, which are necessarily located at the vertices, are of order $1$, and occur if and only if the angle of $X$ is $(1+\t_i)\pi$.


\section{Fuchsian equations}
\label{section-equ-sys}

To prove Theorem~\ref{thm-plateau-max}, we intend to fit to the case of maxfaces the original Garnier's method to solve the Plateau problem in the three-dimensional Euclidean space. His approach relies on an explicit one-to-one correspondence between a class of Fuchsian equations and the space of minimal disks with a polygonal boundary curve. The idea is instead of looking for a minimal disk with a given boundary, to look rather for its associated equation. The main part of the resolution thus belongs to the domain of Fuchsian equations and Fuchsian systems. We present here generalities on this domain. We refer the reader to~\cite{IKSY} for a more complete description of the subject, especially for the definition of the Garnier system, and its relations with the Schlesinger system.

\paragraph{Local behavior}

Let us consider a second-order linear ordinary differential equation on the Riemann sphere $\P=\C\cup\{\infty\}$
\begin{equation}
y''+p(x)y'+q(x)y=0
\tag{$E_0$}\label{E0}
\end{equation}
where the coefficients $p(x)$ and $q(x)$ are meromorphic on $\P$ ($\ '$ denotes the differentiation with respect to the complex variable $x$). Let $S$ denote the singular set of Equation~\eqref{E0} (the poles of $p(x)$ and $q(x)$), which is finite. A singular point $x_0\in S$, $x_0\neq\infty$, is said \emph{Fuchsian} if it is a simple pole for $p(x)$, and a simple or double pole for $q(x)$.

For example, the point $x=0$ is a Fuchsian singularity if and only if we have $p(x)=\w p_0(x)/x$ and $q(x)=\w q_0(x)/x^2$ where the functions $\w p_0(x)$ and $\w q_0(x)$ are holomorphic at $x=0$. We can then define the \emph{exponents} at $x=0$, which are the complex numbers $\a$ and $\b$ solving the following quadratic equation:
\[
 s^2+(p_0-1)s+q_0=0,
\]
where $p_0=\w p_0(0)$ and $q_0=\w q_0(0)$. The singular point $x=0$ is \emph{non-logarithmic} if there is a fundamental system of solutions of the form:
\[
    g(x)=x^\a\w g(x), \quad
    h(x)=x^\b\w h(x),
\]
where $\w g$ and $\w h$ are holomorphic, non-vanishing functions at $x=0$. This is always the case if the exponents $\a$ and $\b$ do not differ by an integer (the expression of solutions at a logarithmic singularity is a bit more complicated). The same results hold for any finite singularity\footnote{%
Fuchsian singularities are always \emph{regular}, which means that the solutions of~\eqref{E0} have a polynomial growth at a Fuchsian singularity. This should be set more carefully since the solutions of~\eqref{E0} are multi-valued around a singularity (one has to restrict them onto sectors centered at the singularity on the universal covering space of $\P\ssm S$). The converse is also true, but for scalar equations only (and not for systems of equations).
}. 

We determine the nature of the point $x=\infty$ by changing the independent variable $x$ into $w=1/x$ in Equation~\eqref{E0}, and by studying the new equation at the point $w=0$. We obtain by this way that $x=\infty$ is a Fuchsian singularity if and only if we have $p(1/w)=w\w p_\infty(w)$ and $q(1/w)=w^2\w q_\infty(w)$ where the functions $\w p_\infty(w)$ and $\w q_\infty(w)$ are holomorphic at $w=0$. If so, the characteristic equation at
infinity is:
\[
 s^2+(1-p_\infty)s+q_\infty=0,
\]
where $p_\infty=\w p_\infty(0)$ and $q_\infty=\w q_\infty(0)$.

Equation~\eqref{E0} is said \emph{Fuchsian} if all its singularities, including $x=\infty$, are Fuchsian. If Equation~\eqref{E0} is Fuchsian, with $r$ singular points $x_1, \ldots, x_r=\infty$ of respective exponents $\a_i$ and $\b_i$, then we obtain from the previous local description the following expression of the coefficients $p(x)$ and $q(x)$
\[
 p(x) = \sum_{i=1}^{r-1} \frac{p_i}{x-x_i}, \qquad
 q(x) = \sum_{i=1}^{r-1} \frac{q_i}{(x-x_i)^2} + \sum_{i=1}^{r-1} \frac{q'_i}{x-x_i},
\]
where $p_i=1-\a_i-\b_i$, $q_i=\a_i\b_i$ and $\sum q'_i =0$. The exponents of a Fuchsian equation are related together: the residue theorem applied to $p(x)$ gives us that $p_\infty = \sum_{i=1}^{r-1}p_i$. From the characteristic equations, we then infer \emph{the Fuchs relation}:
\begin{equation}
    \sum_{i=1}^r(\a_i+\b_i) = r-2.
\label{Fuchs}
\end{equation}

\paragraph{The Riemann--Hilbert problem}

Solutions of Equation~\eqref{E0} are holomorphic functions on the universal covering space of $\P\ssm S$. The \emph{monodromy} of Equation~\eqref{E0} is an equivalent class of representations of the fundamental group of $\P\ssm S$:
\[
 \rho : \pi_1\left(\P\ssm S, * \right) \to \GL(2,\C)
\]
that measures the lack of uniformity of solutions around the singularities. The \emph{Riemann--Hilbert problem} is to prove that there always exists a Fuchsian equation with a given monodromy and a given singular set. If the given monodromy is irreducible, then we get a positive answer, provided that we authorize additional parameters: the \emph{apparent singularities}. The apparent singularities are the singularities at which every solution is uniform. They are exactly the non-logarithmic singularities whose exponents are integers. If the singular set $S$ contains $r\geq3$ points, then the sufficient number of apparent singularities to obtain a positive answer to the Riemann--Hilbert problem is $r-3$ (see~\cite{Oht}).

\paragraph{Isomonodromic deformations}

If we suppose that Equation~\eqref{E0} depends on a variable parameter, how shall we describe the set of Fuchsian equations with a given common monodromy? For Fuchsian equations without logarithmic singularity, isomonodromic deformations are described by the Garnier system, a completely integrable Hamiltonian system generalizing the sixth Painlev\'e equation. But the Garnier system does not have the Painlev\'e property.

\begin{defn}
A differential equation
\[
 F\left(t,y,\frac{dy}{dt}, \ldots,\frac{d^py}{dt^p}\right) = 0,
\]
where the function $F\left(t,y_0,y_1, \ldots,y_p\right)$ is polynomial in $(y_0,y_1, \ldots,y_p)$ with meromorphic coefficients in $t$, has the \emph{Painlev\'e property} if it is free of movable branch point and movable essential singularity, \ie if the position of branch points and of essential singularities of its solutions does not depend on integration constants.
\label{def-prop-P}
\end{defn}

It is the main reason why, unlike Garnier, we mostly exclusively work with first-order $2\times2$ Fuchsian systems (instead of equations). When the position of singularities is varying, isomonodromic deformations of Fuchsian systems are described by the Schlesinger system~\eqref{schlesinger}, which is an integrable system, enjoying the Painlev\'e property.


\section{Equations associated with maxfaces with a polygonal boundary curve}
\label{section-equ-fu}

Let us fix an oriented direction $D\in\D^n$, and consider a maxface $X:\C_+\to \L^3$ in $\X^n_D$. We use the notations introduced in Section~\ref{section-max}: we denote by $P\in\p^n_D$ the polygonal boundary curve of $X$, and by
\[
 Y_0:=(G,H),\quad  Y_0:\C_+\to \C^2
\]
its Weierstrass data, which are holomorphic on the upper half-plane $\C_+$. Since the image of $X$ does not lie in a plane, the Weierstrass data $G$ and $H$ are linearly independent. The function $Y_0$ is thus a fundamental system of solutions of a unique second-order linear ordinary differential equation
\begin{equation}
y''+p(x)y'+q(x)y=0.
\tag{$E$}\label{E}
\end{equation}
This equation is defined on the upper half-plane $\C_+$, its coefficients are expressed by the Weierstrass data by
\[
    p(x) = -\frac{GH''-HG''}{GH'-HG'}, \qquad
    q(x) = \frac{G'H''-H'G''}{GH'-HG'}.
\]
Notice that the Hopf differential of the maxface $X$, which is given by the Wronskian of $G$ and $H$ by $Q=i\left( GH'-HG'\right) \dd x^2$, satisfies $Q=i\exp \left( -\int p\right)\dd x^2$. We already see that the functions $p(x)$ and $q(x)$, which are meromorphic on $\C_+$, have two types of singularities:
\begin{itemize}
\item the pre-images $t_1<\cdots<t_n< t_{n+1}=0, \ t_{n+2}=1,\ t_{n+3}=\infty$ of the vertices of the polygon $P$, at which $Y_0(x)$ is singular,
\item the umbilics of the maxface $X$, \ie the zeros of its Hopf differential, at which $Y_0(x)$, and thus also every solution of Equation~\eqref{E}, is regular.
\end{itemize}
The umbilics are Fuchsian apparent singularities. We will prove that the $t_i$'s are Fuchsian singularities too. On the other hand, the singular points of the maxface $X$, \ie points where $|G|=|H|$ holds, are ordinary points of Equation~\eqref{E}.

Different maxfaces can define the same equation. For example, from Proposition~\ref{prop-spin}, we see that applying an isometry on $X$ keeps Equation~\eqref{E} unchanged, as well as applying a homothety. That is why we consider that Equation~\eqref{E} is defined by an element of $\X^n_D$, \ie by a maxface defined up to translations and homotheties of positive scale factor. An associated family of maxfaces corresponds also to the same equation. 

The aim of this section is to characterize second-order linear differential equations that come from a maxface with a polygonal boundary curve. Essentially these equations are real Fuchsian equations on the Riemann sphere $\P$, and their monodromy is determined by the oriented direction of the polygonal boundary curve.

\subsection{Monodromy and reality properties}

We denote by $S(t)$ the singular set of Equation~\eqref{E}
\[
 S(t):=\{t_1,\ldots, t_{n+3}\} \subset \overline\R.
\]
By geometrical considerations, we will determine the monodromy of the Weierstrass data $G$ and $H$ at these singular points. We will see that the monodromy of Equation~\eqref{E} is closely related to its reality properties, since they both express the Schwarz reflection principle. The following lemma enables us to extend Equation~\eqref{E} on the whole Riemann sphere.

\begin{lemma}
The coefficients $p(x)$ and $q(x)$ of Equation~\eqref{E} are real valued on $\overline\R\ssm S(t)$ and can be meromorphically extended onto $\P\ssm S(t)$. \label{lemma-p-q-real}
\end{lemma}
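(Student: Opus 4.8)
The plan is to exploit the Schwarz reflection principle applied to the maxface $X$ across each open edge, which will simultaneously give the reality of $p$ and $q$ on $\overline\R\ssm S(t)$ and their meromorphic extension across $\overline\R\ssm S(t)$ to $\P\ssm S(t)$. The starting point is that on each interval $(t_i,t_{i+1})$ the maxface $X$ maps onto a straight segment contained in the line through $a_i$ in the direction $D_i$, and $X$ has no singularity near such a boundary point (since, as recalled just before the statement, the Gauss map extends continuously to $\pm v_i$ at the vertices and $X$ is regular near the vertices). Near an interior point $x_0\in(t_i,t_{i+1})$ the surface is a genuine spacelike minimal-type immersion with analytic boundary contained in a line; the boundary line is a geodesic, so by the reflection principle for maximal surfaces $X$ extends real-analytically across that arc, and the extension is obtained by composing with the Lorentzian reflection fixing the line $D_i$.

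Next I would translate this into the Weierstrass data. Reflection across the line $D_i$ is an isometry $R_i\in\OO(2,1)$; choosing a representative in $\SU(1,1)$ (possibly picking the right component/sign, which is where the double cover $R:\SU(1,1)\to\SO^+(2,1)$ from~\eqref{spin-covering} enters), Proposition~\ref{prop-spin} tells us that if $Y_0(x)=(G(x),H(x))$ is the Weierstrass data on $\C_+$, then the data of the reflected maxface are $Y_0 A_i$ for a suitable constant $A_i\in\SU(1,1)$. Concretely, the reflection principle gives a relation of the schematic form
\[
 \overline{Y_0(\overline x)} = Y_0(x)\, A_i \qquad \text{for } x \text{ near } (t_i,t_{i+1}),
\]
so that $x\mapsto \overline{Y_0(\overline x)}A_i^{-1}$ is the analytic continuation of $Y_0$ from $\C_+$ into $\C_-$ across the edge. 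Now $p$ and $q$ are built from $Y_0$ by the formulas
\[
 p = -\frac{GH''-HG''}{GH'-HG'}, \qquad q = \frac{G'H''-H'G''}{GH'-HG'},
\]
and these expressions are invariant under replacing $Y_0$ by $Y_0 A$ for any \emph{constant} $A\in\GL(2,\C)$ (they only depend on the projective equivalence class of the fundamental system, i.e.\ on the equation itself). Hence the function $\widetilde p(x)$ defined on $\C_-$ by reflecting the continued solution agrees with $\overline{p(\overline x)}$ up to the constant-$A_i$ ambiguity, which drops out; thus $p(x)$ and $\overline{p(\overline x)}$ are two meromorphic continuations of the same germ across $(t_i,t_{i+1})$, forcing $p$ to be real on that interval and to extend meromorphically past it, and similarly for $q$. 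Doing this for every $i$ covers all of $\overline\R\ssm S(t)$ and patches the extensions to a single meromorphic function on $\P\ssm S(t)$.

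The main obstacle I expect is the bookkeeping in the reflection step: verifying cleanly that the boundary edge is a geodesic of $\L^3$ and that the classical Schwarz reflection argument really applies at a \emph{spacelike} boundary arc of a maxface that is regular there (one must rule out singular points accumulating at the boundary, which is exactly why the continuity of $Y_0$ on $(t_i,t_{i+1})$ and the $N(t_i)=\pm v_i$ limit were stressed), and pinning down which lift $A_i\in\SU(1,1)$ of the Lorentzian reflection to use so that the continuation of $Y_0$ is single-valued and consistent. The invariance of $(p,q)$ under $Y_0\mapsto Y_0 A$ is a one-line check, so once the reflection statement is in hand the reality and the extension are essentially immediate; the geometric input is the real content. (This also sets up the computation of the monodromy: going around $t_i$ corresponds to composing two such reflections, i.e.\ to $A_{i-1}^{-1}A_i$, which is what the next results will use.)
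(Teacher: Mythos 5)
Your proposal is correct and follows essentially the same route as the paper: the paper first applies a constant matrix $S_i\in\SU(1,1)$ (a lift of a rotation sending $u_i$ to $\pm e_1$) so that $Y_0\cdot S_i$ becomes real on $(t_i,t_{i+1})$, concludes the reality of $p,q$ there from their invariance under $Y_0\mapsto Y_0A$, and then extends by $p(x):=\overline{p(\bar x)}$ --- which is exactly your reflection argument carried out in a conjugated frame. The only cosmetic slip is that the lift of the hyperbolic half-turn about the edge lies in $\SU^-(1,1)=\SU(1,1)\cdot J$ rather than in $\SU(1,1)$ itself, which is harmless here since the invariance of $(p,q)$ holds for any constant matrix in $\GL(2,\C)$.
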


\begin{proof}
Lemma~\ref{lemma-p-q-real} is a direct consequence of the fact
that the maxface $X$ is bounded by pieces of spacelike straight
lines. One can easily deduce from the Weierstrass
representation~\eqref{rep-W} that the image by $X$ of the interval
$\left(t_i,t_{i+1} \right)$ is a piece of straight line directed
by the first-coordinate vector $e_1=(1,0,0)$ if and only if the
functions $G$ and $H$ are both real or purely imaginary on
$\left(t_i,t_{i+1} \right)$. By Proposition~\ref{prop-spin}, we
then know that there is a matrix $S_i\in\SU(1,1)$ such that the
function $\g\cdot S_i$ is real or purely imaginary on
$\left(t_i,t_{i+1} \right)$. By changing $S_i$ into $S_i\cdot
\text{diag}(i,-i)$, we can even assume that it is real. The matrix
$S_i$ is a pre-image by the universal cover~\eqref{spin-covering}
of a direct isometry of $\L^3$ mapping the direction vector $u_i$
into $e_1$ or $-e_1$. The coefficients $p(x)$ and $q(x)$ are thus real
valued on $\left(t_i,t_{i+1} \right)$.

We can thus extend the functions $p(x)$ and $q(x)$ uniformly onto the lower half-plane $\C_-=\{x\in \C \ | \ \Im (x)<0\}$ by setting for every $x\in\C_-$
\[
 p(x) := \overline{p(\bar{x})},\qquad
 q(x) := \overline{q(\bar{x})},
\]
and they are then meromorphic on $\P\ssm S(t)$.
\end{proof}

We introduce the following skew-linear map $\tau$ defined on the sheaf of holomorphic functions $\mathcal O_{\P}$ by
\begin{equation}
\begin{split}
 \tau :\
  \mathcal O_{\P}(\Omega)  & \longrightarrow \mathcal O_{\P}\left( \overline\Omega \right) \\
  f                    & \longmapsto \tau(f)= (x\mapsto \overline{f(\bar{x})}).
\end{split}
\label{def-tau}
\end{equation}
If the open set $\Omega\subset\P$ is connected and symmetric with respect to the real axis: $\overline\Omega=\Omega$, then a function $f\in\mathcal O_{\P}(\Omega)$ is real analytic if and only if $\tau(f)=f$.

The holomorphic function $\tau(Y_0)=\left(\tau(G),\tau(H)\right):\C_- \to\C^2$ is also the Weierstrass data of a maxface $X^-:\C_-\to\R^3$. A direct computation shows that the surface $X^-(\C_-)$ is symmetric to $X(\C_+)$ with respect to the first-coordinate axis $(O,e_1)$. This symmetry is an isometry of $\L^3$ which does not preserve the orientation of time: it belongs to $\SO^-(2,1)$. The surface $X^-(\C_-)$ can also be represented on the upper half-plane $\C_+$ by the maxface $X^+:\C_+\to\R^3$ of Weierstrass data $\left(G^+,H^+\right):\C_+ \to\C^2$ defined by
\[
    \begin{pmatrix}
        G^+ & H^+
    \end{pmatrix}
    =
    \begin{pmatrix}
        G & H
    \end{pmatrix}
    J,
\qquad
\text{where }
J=\begin{pmatrix}
 0 & i\\
 i & 0
  \end{pmatrix}.
\]
This enables us to re-find the hyperbolic analogue of the Schwarz reflection principle. As we have just seen in the proof of Lemma~\ref{lemma-p-q-real}, the system $\g\cdot S_i$ is real on $\left(t_i,t_{i+1} \right)$, and thus extends onto the lower half-plane $\C_-$ through $\left(t_i,t_{i+1} \right)$ by setting for every $x\in\C_-$
\[
 \g(x)\cdot S_i := \tau\big(\g\cdot S_i\big)(x).
\]
The function $\g\cdot S_i$ is then holomorphic on the simply connected open set
\[
 U_i = \C_+\cup\C_-\cup\left(t_i,t_{i+1} \right).
\]
We obtain by this way $n+3$ different analytic continuations $Y_1,\ldots, Y_{n+3}$ onto $\C_-$ of the Weierstrass data $Y_0=\g$. They satisfy
\[
 Y_i:U_i\to\C^2, \qquad Y_i\big|_{\C_+} = Y_0, \qquad Y_i\big|_{\C_-}=\tau\big(Y_0\cdot S_i\cdot \overline S_i^{-1}\big),
\]
and they involve $n+3$ different continuations $X_i:\C_-\to\R^3$ of the maxface $X$. The maxface $X_i$ of Weierstrass data $Y_i$ represents on $\C_-$ the same surface than the maxface of Weierstrass data $Y_0\cdot S_i\cdot \overline S_i^{-1} \cdot J$ defined on $\C_+$. From the identity~\eqref{SU}, we have
\[
 S_i\cdot \overline S_i^{-1} \cdot J = S_i \cdot J \cdot S_i^{-1}.
\]
Since the matrix $S_i\in\SU(1,1)$ is a pre-image by the universal cover~\eqref{spin-covering} of a direct isometry of $\L^3$ mapping the direction vector $u_i$ on $\pm e_1$, multiply the Weierstrass data $Y_0$ by $S_i \cdot J \cdot S_i^{-1}$ amounts to applying to the maxface $X$ the ``hyperbolic half-turn'' $H_i$ of direction $u_i$ defined by
\[
 H_i = \left(R_{S_i}\right)^{-1}
\begin{pmatrix}
 1 & 0 & 0\\
 0 &-1 & 0\\
 0 & 0 &-1\\
\end{pmatrix}
R_{S_i} \quad \in \SO^-(2,1).
\]
Let us define the set
\[
 \SU^-(1,1) :=  \SU(1,1)\cdot J
 = \left\{\left.
\begin{pmatrix}
 a & b\\
 -\overline b & -\overline a
\end{pmatrix}
\ \right\vert \ a,b\in \C,\ \  b\overline b - a\overline a = 1
\right\}
\]
(which is not a group). The covering $R :\SU(1,1) \to \SO^+(2,1)$ extends onto $\SU^-(1,1)$ such that  $R\left(\SU^-(1,1) \right) = \SO^-(2,1)$, and Proposition~\ref{prop-spin} is still true for matrices $A\in\SU^-(1,1)$. 
The identity~\eqref{SU} is transformed on $\SU^-(1,1)$ as follow:
\begin{equation}
 \forall A\in \SU^-(1,1) \qquad J A= -\overline A J.
\label{SU-}
\end{equation}
The matrices $A\in\SU^-(1,1)$ which are mapped by $R$ onto hyperbolic half-turns are characterized by the equation $A^2=-\I_2$, and the two pre-images of the same hyperbolic half-turn are the opposite one of the other.

\bigskip

The Schwarz reflection principle, which can thus be seen as a consequence of reality properties of the maxface $X$, enables us to determine how the fundamental solution $Y_0$ of Equation~\eqref{E} is transformed around the singular points $x=t_i$. We fix a base point $x_0\in\C_+$. The fundamental group $\pi_1\left(\P\ssm S(t), x_0\right)$ is generated by the equivalent classes of the loops $\ga_1,\ldots,\ga_{n+3}$ drawn in Figure~\ref{fig-ga-i}. We denote by $\ga_i\ast Y_0$ the analytic continuation of the fundamental solution $Y_0$ along the loop $\ga_i$. It is still a fundamental solution of Equation~\eqref{E} holomorphic on the upper half-plane $\C_+$, since the coefficients $p(x)$ and $q(x)$ are uniform around $x=t_i$. The monodromy matrix $M_i$ of $Y_0$ along the loops $\ga_i$ is then the unique invertible matrix satisfying for every $x\in\C_+$
\[
 \ga_i\ast Y_0(x) = Y_0(x)M_i.
\]
The matrices $M_1,\ldots,M_{n+3}$ satisfy
\[
M_{n+3}\cdots M_1 = \I_2,
\]
and they form a system of generators of the monodromy of Equation~\eqref{E}.

\begin{figure}
\centering
\begin{pspicture}(0,0)(14,8)
\rput(7,4){\includegraphics[height=7cm]{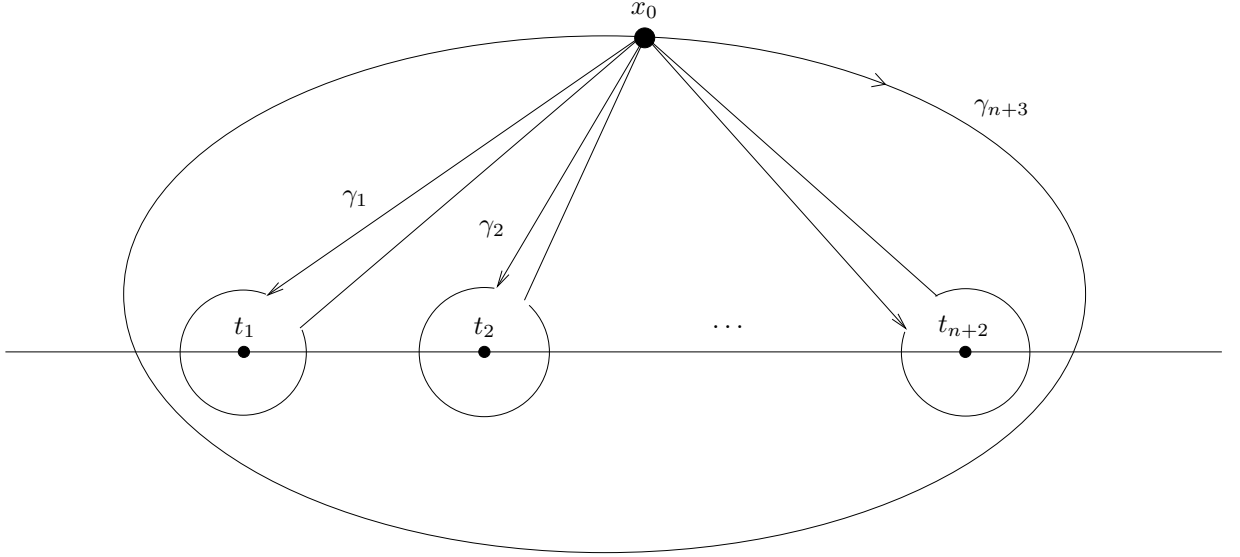}}
\rput(7.4,7.7){$x_0$}
\rput(12.1,6.4){$\ga_{n+3}$}
\rput(2.15,3.5){$t_1$}
\rput(5.3,3.5){$t_2$}
\rput(11.6,3.5){$t_{n+2}$}
\rput(8.5,3.5){$\ldots$}
\rput(3.6,5.2){$\ga_1$}
\rput(5.4,4.8){$\ga_2$}
\end{pspicture}
\caption{The loops $\ga_i$}
\label{fig-ga-i}
\end{figure}

\begin{prop}
For every $i=1,\ldots,n+3$, we can choose a pre-image in $\SU^-(1,1)$ of the hyperbolic half-turn of direction $D_i$, which we will also denote by $D_i\in\SU^-(1,1)$, such that the monodromy matrices $M_i$ of the fundamental solution $Y_0$ around the singularities $x=t_i$ write
\begin{equation}
    M_i = D_iD_{i-1}^{-1}.
\label{def1-Mi}
\end{equation}
The matrices $M_i$ thus belong to $\SU(1,1)$, and they respectively correspond to the direct isometries of timelike axis $v_i$ and of angle $2\pi\t_i$.
\label{prop-mono}
\end{prop}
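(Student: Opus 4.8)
The plan is to trace out how the fundamental solution $Y_0$ is modified under analytic continuation along each loop $\ga_i$, using the Schwarz reflection extensions $Y_i$ constructed just above the statement. Recall we have $n+3$ continuations $Y_i : U_i \to \C^2$ with $Y_i|_{\C_+} = Y_0$ and $Y_i|_{\C_-} = \tau(Y_0 \cdot S_i \overline S_i^{-1})$. The loop $\ga_i$ can be decomposed as: go from $x_0\in\C_+$ down through the interval $(t_{i-1},t_i)$ into $\C_-$, wind around $t_i$, and come back up through $(t_i,t_{i+1})$. The key observation is that crossing $(t_i,t_{i+1})$ downward realizes the continuation $Y_i$, while crossing $(t_{i-1},t_i)$ downward realizes $Y_{i-1}$; so the monodromy of $\ga_i$ compares these two extensions across $\C_-$. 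Concretely, continuing $Y_0$ down through $(t_i, t_{i+1})$ gives $\tau(Y_0 S_i \overline S_i^{-1})$ on $\C_-$, and then continuing back up through $(t_{i-1},t_i)$ we must express this in terms of the $Y_{i-1}$-extension, which amounts to $Y_0 \cdot (S_{i-1}\overline S_{i-1}^{-1})^{-1}(S_i \overline S_i^{-1})$ after the computation (the $\tau$ applied twice across the real axis cancels). Using identity~\eqref{SU}, $S_i \overline S_i^{-1} = S_i J S_i^{-1} J^{-1}$, and if we set $D_i := S_i J S_i^{-1} \in \SU^-(1,1)$ — which is precisely a pre-image of the hyperbolic half-turn $H_i$ of direction $u_i$, since $D_i^2 = S_i J^2 S_i^{-1} = -\I_2$ — then $S_i \overline S_i^{-1} = D_i J^{-1}\cdot$(constant), and the monodromy matrix collapses to $M_i = D_i D_{i-1}^{-1}$.

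First I would carefully set up the decomposition of $\ga_i$ into segments and track which sheet (which $Y_j$) is picked up on each crossing of the real axis, paying attention to orientation of the loop as drawn in Figure~\ref{fig-ga-i}. Second, I would carry out the composition: continuation of $Y_0$ downward through $(t_i, t_{i+1})$ is $\tau(Y_0 S_i \overline{S_i}^{-1})$; then one checks that on $\C_-$ this equals $\tau(Y_0 S_{i-1}\overline{S_{i-1}}^{-1}) \cdot C$ for an appropriate constant matrix $C$, because both are $\tau$ of fundamental solutions and hence differ by right-multiplication by a constant; matching boundary values on $(t_{i-1},t_i)$ — where $Y_{i-1}$ agrees with $Y_0$ from above — forces $C = \overline{S_{i-1}}\, \overline{S_i}^{-1} S_i S_{i-1}^{-1}$ or its inverse depending on orientation, and unwinding this with~\eqref{SU} yields $M_i = D_i D_{i-1}^{-1}$. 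Third, I would verify the product relation $M_{n+3}\cdots M_1 = \I_2$ is consistent (it is, telescopically, once one notes $D_{n+3} = D_0$ by the index convention modulo $n+3$), and check $M_i \in \SU(1,1)$ — which holds because $D_i, D_{i-1} \in \SU^-(1,1)$ and $\SU^-(1,1) = \SU(1,1)\cdot J$ with $J^{-1}\cdot(\SU(1,1) J) = \SU(1,1)$, i.e., a product of two elements of the non-group $\SU^-(1,1)$ lands back in $\SU(1,1)$.

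Finally I would identify the geometric isometry represented by $M_i$. Since $D_i = S_i J S_i^{-1}$ covers the half-turn $H_i$ about the spacelike direction $u_i$, the composition $R_{D_i} R_{D_{i-1}}^{-1} = H_i \circ H_{i-1}^{-1} = H_i \circ H_{i-1}$ (half-turns are involutions) is a direct isometry fixing the common perpendicular to $u_{i-1}$ and $u_i$, which is the timelike normal direction $v_i = -u_{i-1}\times u_i$; the rotation angle is twice the angle between the two axes, i.e. $2\t_i\pi$, by the standard composition-of-half-turns computation now carried out with the $\SU(1,1)$ spin formulas recalled before Proposition~\ref{prop-spin}. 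One must be slightly careful that the sign/branch choice of the pre-image $D_i \in \SU^-(1,1)$ (the two pre-images being opposite) is fixed consistently so that the $M_i$ come out with the stated form rather than up to signs; since the $S_i$ themselves were only determined up to sign, this is a legitimate normalization and can be absorbed.

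The main obstacle I anticipate is the bookkeeping in the second step: correctly identifying, with the right orientations and the right side of each slit $(t_j,t_{j+1})$, how the double application of $\tau$ (once going down, once coming back up) interacts with the right-multiplications by $S_j \overline S_j^{-1}$, so that the constant matrices combine into exactly $D_i D_{i-1}^{-1}$ and not, say, $D_{i-1}^{-1} D_i$ or a conjugate. The conceptual content is light — it is really just "reflect across two edges, get a rotation about their common normal" — but getting the order of matrix multiplication and the branch of the square root exactly right is where an error would creep in, so I would double-check it against the known Euclidean analogue from~\cite{Desideri} and against the consistency relation $M_{n+3}\cdots M_1 = \I_2$.
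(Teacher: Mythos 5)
Your proposal follows essentially the same route as the paper: the monodromy matrix is read off as the constant matrix relating the two Schwarz-reflection extensions $Y_{i-1}$ and $Y_i$ on $\C_-$, the identity $JA=\overline A J$ converts $S_i\overline{S_i}^{-1}$ into $D_iJ^{-1}$ with $D_i=S_iJS_i^{-1}$ (sign fixed by requiring $Y_0S_i$ real on $(t_i,t_{i+1})$), and the product collapses to $D_iD_{i-1}^{-1}$, with the same geometric identification of $M_i$ as the composition of two hyperbolic half-turns. The one slip is the order of factors in your intermediate formula, $\left(S_{i-1}\overline{S_{i-1}}^{-1}\right)^{-1}\left(S_i\overline{S_i}^{-1}\right)$ instead of $\left(S_i\overline{S_i}^{-1}\right)\left(S_{i-1}\overline{S_{i-1}}^{-1}\right)^{-1}$ as forced by $Y_{i-1}=Y_iM_i$ (only the latter makes the inner $J^{-1}$ and $J$ cancel to give $D_iD_{i-1}^{-1}$ rather than a conjugate of $D_{i-1}^{-1}D_i$), but you explicitly flag this bookkeeping as the point to verify, so the argument is sound.
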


\begin{proof}
The monodromy matrix $M_i$ is the unique matrix satisfying for all $x\in\C_-$: $ Y_{i-1}(x) = Y_i(x) M_i$. When we extend $Y_0$ along the loop $\ga_i$, we apply to the image of $X$ two successive hyperbolic half-turns, and the matrix $M_i$ thus writes
\[
 M_i = \w D_i \w D_{i-1},
\]
where  the $\w D_j$ are pre-images in $\SU^-(1,1)$ of hyperbolic half-turns of direction $u_j$. We want to compare the pre-images arising in two successive monodromy matrices $M_{i-1}$ and $M_i$. The two pre-images of the hyperbolic half-turn of direction $u_i$ are $\pm S_iJS_i^{-1}$. For every $i=1,\ldots,n+3$, we fix $D_i$ to be
\[
D_i = S_iJS_i^{-1}.
\]
The matrix $D_i$ depends on $S_i$, which is any pre-image of any direct isometry mapping $u_i$ into $\e e_1$, only through the sign $\e=\pm1$. This sign is fixed once we ask for $Y_0S_i$ to be real on $(t_i,t_{i+1})$. The matrix $D_i$ satisfies on $\C_-$
\[
 Y_i=\tau\big(Y_0 D_i J^{-1}\big).
\]
From the identity~\eqref{SU-}, we have $\left(\overline D_i\overline J^{-1}\right)^{-1} = D_i J^{-1}$, and we obtain on $\C_+$:
\[
 Y_0 = \tau\big(Y_i D_i J^{-1}\big).
\]
Thus for every $x\in\C_-$
\[
 Y_{i-1}(x)D_{i-1} = Y_i(x)D_i
\]
\ie $ Y_{i-1}(x) = Y_i(x) D_iD_{i-1}^{-1}$, which ends the proof.
\end{proof}

In the previous proof, we could also have chosen for every $i$, $D_i=-S_iJS_i^{-1}$. But if we consider at a vertex $a_i$ the two matrices $D'_{i-1}=S_{i-1}JS_{i-1}^{-1}$ and $D'_i=-S_iJS_i^{-1}$, then the product $D'_i(D'_{i-1})^{-1}=-M_i$ is still a pre-image of the same direct isometry than $M_i$, but their eigenvalues are different. From the study of the local behavior of the maxface $X$ at $x=t_i$ (see the next subsection), we know
that it amounts to changing the orientations of the edges number $i-1$ and $i$ one with respect to the other (it changes the normal vector $v_i$ into $-v_i$ and the exterior angle $\t_i$ into $1-\t_i$). We thus can say that the choice of the pre-image of a hyperbolic half-turn around an edge of the polygon $P$ is induced by the orientations of the directions of $P$, up to a global change of $D$ into $-D$.

From the explicit expression of the universal cover~\eqref{spin-covering}, we can infer the expression of the matrices $M_j$
\[
M_j = \cos\left( \pi\t_j \right) \I_2 + \sin \left( \pi\t_j \right)
\begin{pmatrix}
iv_3^j & v_1^j+iv_2^j\\
v_1^j-iv_2^j & -iv_3^j
\end{pmatrix},
\]
where $v_j=(v_1^j,v_2^j,v_3^j)$, and where the choice of the pre-image is induced by the previous considerations on the orientation. But their expression as product of hyperbolic half-turns will be more useful.

\subsection{Local behavior around the singularities}

The local behavior of the Weierstrass data $\g$ around the vertices of the polygonal boundary curve and around the umbilics is exactly the same as in the Euclidean case. We briefly remind here how we obtain it from the local expression of the maxface $X$ at these points. We only consider what we call \emph{the generic situation}, we explain after how it involves any general ones (see Propositions 2.9 and 2.12 in~\cite{Desideri} for more details).

\bigskip

The Weierstrass data $G$ and $H$ have no essential singularity at
the points $x=t_i$ (even at $x=\infty$ when the maxface $X$ has a
helicoidal end), which are thus Fuchsian singularities of
Equation~\eqref{E}. Since the umbilics are obviously Fuchsian
singularities too, we obtain the following proposition.

\begin{prop}
Equation~\eqref{E} is Fuchsian on the Riemann sphere $\P$.
\end{prop}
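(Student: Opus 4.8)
The proposition asserts that Equation~\eqref{E} is Fuchsian on all of $\P$. By Lemma~\ref{lemma-p-q-real} the coefficients $p(x)$ and $q(x)$ already extend meromorphically to $\P\ssm S(t)$, so there is nothing to check away from $S(t)$; the only work is to verify that each of the $n+3$ points $t_1,\ldots,t_{n+3}$ (including $t_{n+3}=\infty$) and each umbilic is a \emph{Fuchsian} singularity, i.e. that $p$ has at worst a simple pole and $q$ at worst a double pole there. For the umbilics this is immediate: they are the zeros of the Hopf differential $Q=i(GH'-HG')\dd x^2$, at which $G$ and $H$ — and hence every solution of~\eqref{E} — remain holomorphic and linearly independent, so~\eqref{E} has two holomorphic linearly independent local solutions and the point is an apparent (in particular regular, hence Fuchsian) singularity. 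So the real content is the statement, quoted just above the proposition, that $G$ and $H$ have no essential singularity at the $t_i$.

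\textbf{Step 1: reduce to a local statement about the Weierstrass data.} Fix a vertex $a_i$ of $P$ with pre-image $t_i$. Using Proposition~\ref{prop-spin} (and its extension to $\SU^-(1,1)$), apply a fixed isometry so that the two edges meeting at $a_i$ become real or purely imaginary, and then a translation so that $a_i$ is the origin; after composing with a homography one may also take $t_i=0$. It now suffices to show that the (transformed) Weierstrass data are of the form $x^{\alpha}\cdot(\text{holomorphic})$ near $x=0$, possibly with a logarithmic term, with finite exponents — equivalently, that $X$ has at most polynomial growth at $t_i$ — since the coefficients $p,q$ expressed through $G,H$ then automatically acquire poles of order $\le 1$ and $\le 2$.

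\textbf{Step 2: exploit the geometry at the vertex.} This is the crux. The maxface $X$ is locally embedded near $a_i$ (third defining property of $\X^n_D$) and bounded there by two spacelike segments making exterior angle $\theta_i\pi$ with $0<\theta_i<1$; moreover the Gauss map $N$ extends continuously to $t_i$ with $N(t_i)=\pm v_i$ timelike, so $X$ has no singularity on a punctured neighbourhood of $t_i$ and $|G|\ne|H|$ there. By the Schwarz reflection principle established in Section~\ref{section-equ-fu}, the pair $\g$ extends across each of the two adjacent boundary intervals, so $\g$ is defined and holomorphic on a full punctured disk around $t_i$ (on the universal cover). One then invokes the classical fact — the content of Propositions 2.9 and 2.12 of~\cite{Desideri}, applied verbatim since the local analysis is identical to the Euclidean case — that a conformal (max)face bounded by two analytic arcs meeting at a corner of interior angle in $(0,\pi)\cup(\pi,2\pi)$, locally embedded and with continuous unit normal at the corner, has Weierstrass data with at most a power-type (possibly logarithmic) singularity at the corner, the exponent being governed by $\theta_i$; in particular no essential singularity occurs. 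The same argument at $t_{n+3}=\infty$ (where $X$ may instead have a helicoidal end) gives, after the change of variable $w=1/x$, a Fuchsian singularity at infinity as well.

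\textbf{Step 3: conclude.} With every point of $S(t)$ and every umbilic shown to be Fuchsian, and $p,q$ meromorphic on all of $\P$ by Lemma~\ref{lemma-p-q-real}, Equation~\eqref{E} is Fuchsian on $\P$ by definition. The main obstacle is entirely in Step 2: transferring the local regularity of the surface at a polygonal vertex into a bound on the order of the pole of $p$ and $q$. This is where the hypotheses built into the definition of $\X^n_D$ (local embeddedness, continuous limiting Gauss vector, exterior angle strictly between $0$ and $1$) are used, and where one leans on the detailed local study carried out in~\cite{Desideri}; since the excerpt explicitly says this local behaviour "is exactly the same as in the Euclidean case," the proof here is expected to be short, essentially a citation of that analysis together with the remark that the maxface has no singular points near the vertices.
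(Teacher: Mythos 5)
Your proposal is correct and follows the paper's own route: the paper likewise disposes of the umbilics as obviously Fuchsian (the solutions are holomorphic there) and reduces everything to showing that $G$ and $H$ have no essential singularity at the $t_i$ (including $t_{n+3}=\infty$), which it obtains from the local expansion of the maxface at a vertex exactly as in the Euclidean case, citing Propositions 2.9 and 2.12 of~\cite{Desideri}. Your identification of the local-embeddedness, limiting timelike Gauss vector, and exterior-angle hypotheses as the inputs to that local analysis matches where the paper places the weight.
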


The eigenvalues of the monodromy matrix $M_i$ determine the
exponents at the singular point  $x=t_i$ up to integers. Their
exact values are given by the local expansion of the maxface $X$
at $x=t_i$. As we have seen in Section~\ref{section-max}, the
angle of $X$ at the vertex $a_i$ is $(1-\t_i)\pi\in(0,\pi)$ or
$(1+\t_i)\pi\in(\pi,2\pi)$, where $\t_i\pi$ is the exterior angle
of the polygonal boundary curve $P$ at $a_i$. Up to a direct
isometry of $\L^3$, we can always suppose that the Gauss map
$N(t_i)$ is equal to the third-coordinate vector $e_3=(0,0,1)$ or
to its opposite, and that the direction vector $u_i$ is equal to
the first-coordinate vector $e_1$. In this position we have the
following expansion at $x=t_i$
\[
     X(x)-X(t_i) \sim \Re
        \begin{pmatrix}
            a(x-t_i)^{1-\e_i\t_i}\\
            ib(x-t_i)^{1-\e_i\t_i}\\
            ic(x-t_i)^{1+r_i}\\
        \end{pmatrix},
\]
where $r_i$ is a non-negative integer, $\e_i=\pm1$ such that
$\e_i=+1$ when $r_i=0$, and $a$, $b$ and $c$ are non-zero real
constants. We see that $\mid \dd X/ \dd x \mid$ tends to $0$ at
$x=t_i$ if and only if $\e_i=-1$. This means that the point
$x=t_i$ is a boundary branch point of $X$ if and only if the angle
of $X$ at the vertex $a_i$ is $(1+\t_i)\pi$. If so, the branch
point order is $1$.

The generic situation corresponds to $r_i=0$. In this case, the
Weierstrass data have the following local behavior
\[
 G(x) \sim \a(x-t_i)^{-\frac{\t_i}{2}}, \qquad H(x) \sim \b(x-t_i)^\frac{\t_i}{2},
\]
(where $\a$ and $\b$ are non-zero constants), or the converse
whether $N(t_i)=e_3$ or $N(t_i)=-e_3$. At $x=\infty$, we obtain in
the generic case
\[
 G(x) \sim \a\left(\frac1x\right)^{1-\frac{\t_i}{2}}, \qquad H(x) \sim
 \b\left(\frac1x\right)^\frac{\t_i}{2},
\]
(or the converse). Since we then have $G(x)H(x)\sim \a\b/x$, we
see that the generic situation leads to a helicoidal end at
$a_{n+3}$.

\bigskip

Let us denote by $\l_1,\ldots,\l_N$ ($N\in \N$) the umbilics of
$X$, and their conjugates in $\C_-$. By the same way, one can prove
that the exponents at an umbilic $x=\l_k$ are $0$ and an integer
$m_k\geq2$ such that $m_k-1$ is the order of the zero of the Hopf
differential at $x=\l_k$ (again by assuming $N(\l_k)=\pm e_3$).
The generic situation corresponds to $m_k=2$. The general
situation is obtained by merging such generic umbilics together.
Since the Weierstrass data $\g$ are holomorphic at the umbilics,
they are apparent singularities, and there are no other apparent
singularities than umbilics. From Lemma~\ref{lemma-p-q-real}, we
know that the apparent singularities of Equation~\eqref{E} are
real or conjugate by pairs.

\bigskip

We can consider that a non-generic singularity $t_i$ or $\l_k$ is obtain from a generic one by merging it with apparent singularities. This process increases by integers one of the exponents at the generic singularity. For example, if the polygonal boundary curve is closed, it means that an apparent singularity coincides with the singular point $x=\infty$: it changes the exponents at infinity from $(1-\frac{\t_i}{2}, \frac{\t_i}{2})$ to $(1-\frac{\t_i}{2},1+\frac{\t_i}{2})$. In the point of view of Equation~\eqref{E}, this merging process has to be done carefully, since Equation~\eqref{E} is not canonical (for example it is not $\SL(2,\C)$-type). The use of Fuchsian systems will make this process clearer.

In general situations, the integers $r_i$, $m_k$ and the number
$N$ of apparent singularities are not entirely free: they are
related together by the Fuchs relation~\eqref{Fuchs}. It implies
in particular that the number $N$ is smaller or equal to $n$. The
equality $N=n$ holds if and only of all the singularities are
generic.

\begin{rem}
A maxface with a \emph{closed} polygonal boundary curve has at most $n-1$ apparent singularities. This is the reason why we consider possibly unclosed polygonal curves. Indeed, to get a positive answer to the Riemann--Hilbert problem and to construct isomonodromic deformations with $n+3$ non-apparent singularities, we have to authorize $n$ apparent singularities (see Ohtsuki~\cite{Oht}).
\end{rem}

\subsection{The space of Fuchsian equations}

Let us fix an oriented direction $D\in\D^n$, and denote by
$\t_i\pi$ the exterior angles of $D$, such that $\t_i\in(0,1)$. We
identify every oriented direction $D_i$ with the appropriate
pre-image of the hyperbolic half-turn of direction $D_i$ given by
Proposition~\ref{prop-mono}. We have seen that any
equation~\eqref{E} associated by the Weierstrass representation
with a maxface $X\in\X^n_D$ satisfies the following properties.

\begin{enumerate}
 \item \label{cond-SdR} Equation~\eqref{E} is Fuchsian on the Riemann sphere $\P$. It has $n+3$ non-apparent distinct singularities $t_1,\ldots,t_n,\ t_{n+1}=0$, $t_{n+2}=1$, $t_{n+3}=\infty$, and $n$ apparent ones $\l_1,\dotsc,\l_n$. Its exponents are given by:
\begin{equation}
\begin{split}
            &\begin{pmatrix}
                    x=t_i            & x=\infty               & x=\l_k\\
                    -\frac{\t_i}{2}  & \frac{\t_\infty}{2}    & 0\\
                    \frac{\t_i}{2}   & 1-\frac{\t_\infty}{2}  & 2
             \end{pmatrix}\\
            &\begin{array}{ccc}
                    i=1,\dotsc,n+2, &   & k=1,\dotsc,n
             \end{array}.
\end{split}
\label{SdR-E}
\end{equation}
 \item \label{cond-mono} A system of generators $M_i$ ($i=1,\dotsc,n+3$) of the monodromy of Equation~\eqref{E} along the loops $\ga_i$ belongs to the group $\SU(1,1)$ and writes
\[
 M_i=D_iD_{i-1}^{-1}, \quad \text{ where } D_i\in\SU^-(1,1),\ D_i^2=-\I_2.
\]
 \item \label{cond-realite} Equation~\eqref{E} is real, and the $n$-tuple of singular points $t=(t_1,\ldots,t_n)$ belongs to the simplex
\[
 \pi^n=\left\{ (t_1,\ldots,t_n)\in\R^n\ \big|\ t_1<\cdots<t_n<0\right\}  .
\]
\end{enumerate}

\begin{nota}
For any oriented direction $D\in\D^n$, we denote by $\E^n_D$ the set of equations satisfying conditions~\ref{cond-SdR}, \ref{cond-mono} and~\ref{cond-realite} above, where the direction $D$ (and thus also the $\t_i$'s) are fixed.
\end{nota}

Notice that the only difference between this space and the analogous space of equations associated with minimal disks with a polygonal boundary curve is that the matrices $D_i$ belong to $\SU^-(1,1)$ instead of $\SU(2)$.

The coefficients $p(x)$ and $q(x)$ of an equation~\eqref{E} satisfying condition~\ref{cond-SdR} are of the form
\begin{align*}
            p(x) &= \sum_{i=1}^{n+2} \dfrac{1}{x-t_i} - \sum_{k=1}^n \dfrac{1}{x-\l_k},\\
            q(x) &= -\frac14 \sum_{i=1}^{n+2} \dfrac{\t_i^2}{(x-t_i)^2} + \dfrac{\kappa}{x(x-1)} - \sum_{i=1}^n\dfrac{t_i(t_i-1)K_i}{x(x-1)(x-t_i)}\\
               & \qquad   + \sum_{k=1}^n\dfrac{\l_k(\l_k-1)\mu_k}{x(x-1)(x-\l_k)},
\end{align*}
where $\kappa=\frac{\t_\infty}{2}\left(1- \frac{\t_\infty}{2}\right) +\frac14 \sum_{i=1}^{n+2} \t_i^2 $. But all the possible choices of the parameters $t$, $K$, $\l$ and $\mu$ do not necessarily define an equation satisfying condition~\ref{cond-SdR}, since it could have logarithmic singularities at the $\l_k$. The Hopf differential of a maxface $X\in\X^n_D$ thus writes
 \[
 Q = i\frac{\Lambda(x)}{T(x)} \, \dd x^2,
\]
with
\begin{equation}
 \Lambda(x)=\prod_{k=1}^n(x-\l_k), \quad T(x)=\prod_{i=1}^{n+2}(x-t_i)
\label{def-T-L}
\end{equation}
where, as before, some apparent singularities $\l_k$ can coincide ones with others, or with a point $t_i$.

The space of equations $\E^n_D$ provides an appropriate description of the space of maxfaces $\X^n_D$, as stated in the following result.

\begin{prop}
The correspondence given by the spinor Weierstrass representation between the spaces $\X^n_D$ and $\E^n_D$ is one-to-one.
\end{prop}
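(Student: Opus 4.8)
The plan is to construct the inverse correspondence explicitly and verify it is well-defined and two-sided. Starting from an equation $(E)\in\E^n_D$, we already know from condition~\ref{cond-SdR} that it is a real Fuchsian equation on $\P$ whose exponent differences at the $t_i$ are $\t_i\in(0,1)$ and whose apparent singularities $\l_k$ have integer exponents $0,2$, so that a non-logarithmic fundamental system exists in a neighbourhood of the real base point $x_0\in\C_+$. Fix such a fundamental system $Y_0=(G,H)$. The first step is to normalise this choice: using condition~\ref{cond-mono}, the monodromy of $Y_0$ along the generators $\ga_i$ is conjugate to $(M_i)=(D_iD_{i-1}^{-1})$ with $D_i\in\SU^-(1,1)$; after a constant change $Y_0\mapsto Y_0C$ we may assume the monodromy representation is \emph{equal} to this one (not merely conjugate), and this fixes $Y_0$ up to right multiplication by a matrix commuting with all the $M_i$, hence — since the monodromy is irreducible for $D\in\D^n$ — up to a scalar. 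Then define $X:\C_+\to\L^3$ by the Weierstrass integral~\eqref{rep-W} with data $(G,H)$. One checks $|G|-|H|\not\equiv0$ (the image is not planar since $G,H$ are independent) and that $G,H$ have no common zero (a common zero would be a logarithmic-free singularity with both exponents shifted, incompatible with~\eqref{SdR-E} at an ordinary point), so $X$ is a genuine maxface by Theorem~\ref{thm-W2}.

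The second step is to show $[X]\in\X^n_D$, i.e.\ that $X$ extends continuously to $\overline\R$, parametrises a polygon $P$ with $[P]\in\p^n_D$, has no boundary branch point except possibly at vertices, is locally embedded near the vertices, and has a helicoidal end when $P$ is unclosed. This is where the reality and monodromy conditions do their work, reversing the analysis of Section~\ref{section-equ-fu}: on each interval $(t_i,t_{i+1})$, condition~\ref{cond-realite} together with the precise form $M_i=D_iD_{i-1}^{-1}$ forces the existence of $S_i\in\SU(1,1)$ with $Y_0S_i$ real on that interval (this is exactly the inverse of the argument in the proof of Proposition~\ref{prop-mono}), so by Proposition~\ref{prop-spin} the image $X((t_i,t_{i+1}))$ lies on a spacelike straight line of direction $D_i$; the local exponents $\pm\t_i/2$ at $t_i$ give the stated angle behaviour and the branch-point/helicoidal-end statements via the local expansions already recorded in Subsection~2.2; local embeddedness near $a_i$ follows from those same expansions. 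The directions assembled this way form the prescribed $D\in\D^n$, so $P\in\p^n_D$. Conversely, the direct correspondence $X\mapsto(E)$ is already established in the body of Section~\ref{section-equ-fu}, so what remains is that the two constructions are mutually inverse: starting from $X$, forming $(E)$, and running the above reconstruction returns a maxface differing from $X$ only by an element of $\SO^+(2,1)$ and a positive homothety — precisely the equivalence defining $\X^n_D$ — because the reconstructed Weierstrass data differ from the original only by a right $\SU(1,1)$-factor and a scalar, which by Proposition~\ref{prop-spin} is an isometry composed with a homothety. Going the other way, starting from $(E)$, reconstructing $X$, and reading off its associated equation returns $(E)$ unchanged since the coefficients $p,q$ are determined by any fundamental system via the formulas of Section~\ref{section-equ-fu}, invariant under $Y_0\mapsto Y_0C$.

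The main obstacle is the second step — verifying that an \emph{abstract} real Fuchsian equation with the prescribed exponents and $\SU(1,1)$-monodromy actually integrates to a maxface whose boundary is a genuine (non-degenerate, spacelike) polygon in $\p^n_D$, rather than some degenerate or self-overlapping configuration. The spacelike condition on the edges and the timelike condition on the normals $v_i$ have to be extracted from the $\SU(1,1)$ (as opposed to $\SU(2)$) nature of the $M_i$ and the fact that each $M_i$ has eigenvalues $e^{\pm i\pi\t_i}$ with $\t_i\in(0,1)$ real, placing its axis $v_i$ in the timelike regime; and the non-coplanarity clauses in Definition~\ref{def-Dn} must be seen to be consequences of irreducibility together with the genericity of $D$. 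This is the part that genuinely differs from the Euclidean treatment of~\cite{Desideri}, and it is dispatched by the Schwarz-reflection bookkeeping together with the explicit half-turn description of the $M_i$ given just before Subsection~2.2; the remaining verifications are routine and parallel the Euclidean case, so we only indicate them.
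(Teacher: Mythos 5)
Your overall strategy (reconstruct the maxface from a fundamental solution whose monodromy is exactly the prescribed system $(M_i)$, then check the two maps are mutually inverse) is the same as the paper's, but there is a genuine gap at precisely the step the paper singles out as the main content of the proof. After normalizing the monodromy you correctly note that $Y_0$ is fixed only up to a scalar $\lambda\in\C^*$, but you then assert that for \emph{this} $Y_0$ condition~(iii) and the form $M_i=D_iD_{i-1}^{-1}$ ``force the existence of $S_i\in\SU(1,1)$ with $Y_0S_i$ real on $(t_i,t_{i+1})$.'' That statement is not scalar-invariant: if $Y_0S_i$ is real on an interval, then $\lambda Y_0 S_i'$ can be real for some $S_i'\in\SU(1,1)$ only when $\lambda S_i^{-1}S_i'\in\GL(2,\R)$, which for $S_i^{-1}S_i'\in\SU(1,1)$ forces $\lambda$ to be real or purely imaginary. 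So for a generic representative of the scalar class the claim fails, and the actual work --- which your plan omits --- is to prove that a single $\lambda$ can be chosen making $\lambda Y_0 S_i$ real \emph{simultaneously} for all $n+3$ intervals. This is where the specific structure $M_i=D_iD_{i-1}^{-1}$ with $D_i\in\SU^-(1,1)$, $D_i^2=-\I_2$ (Condition \textbf{C2}) is genuinely used; reality of the equation alone only gives $\tau(Y_0)=Y_0C$ for some constant $C$, and one must extract the correct $\lambda$ from the interplay of $C$ with the $D_i$.

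Your injectivity argument also misidentifies the equivalence relation: $\X^n_D$ is the quotient by translations and \emph{positive homotheties only}, not by all of $\SO^+(2,1)$, so ``the reconstruction returns $X$ up to an isometry'' does not by itself give injectivity. The residual ambiguity in $Y_0$ is a scalar $\lambda=re^{i\theta}$, and multiplying the Weierstrass data by $e^{i\theta/2}$ produces the associated family of $X$, not an ambient isometry of it. The correct closing argument is the one the paper indicates: in a given associated family of maxfaces at most one member can be bounded by a polygon, so the polygonal-boundary requirement pins down $\theta$ and leaves only the positive homothety $r$, which is quotiented out. The remaining parts of your plan (recovering the polygonal boundary, the local expansions at the $t_i$, the absence of common zeros of $G$ and $H$) are consistent with the paper's treatment.
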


This means that any equation in the space $\E^n_D$ admits a fundamental solution $\g$ which is the Weierstrass data of a maxface $X\in\X^n_D$. We do not reproduce the proof of this result, since it is the same as the one of Proposition 2.15 in~\cite{Desideri} for minimal disks. The idea is to consider all fundamental solutions having the matrices $M_i$ of condition~\ref{cond-mono} for monodromy matrices: they form a one-parameter family $\l Y(x)$, $\l\in\C^*$. We mainly have to prove that we can choose the scalar $\l$ in such a way that for every $i=1,\ldots,n+3$ there is a matrix $S_i\in\SU(1,1)$ such that the solution $\l Y(x)\cdot S_i$ is real on $\left(t_i,t_{i+1}\right)$. We need for this purpose the reality condition~\ref{cond-realite}, but also the particular expression of the matrices $M_i$. The injective nature of the correspondence comes from the fact that in a given associated family of maxfaces, at most one can be bounded by a polygon.


\section{Isomonodromic deformations}
\label{section-isomono}

We intend in this section to describe the space $\E^n_D$, in order to provide an explicit description of the space $\X^n_D$ and thus also of the length ratios of their polygonal boundary curves. The Garnier system involves isomonodromic deformations of Fuchsian equations satisfying condition~\ref{cond-SdR}, and can be used to express the equations in $\E^n_D$. Unfortunately, the Garnier system does not have the Painlev\'e property (see Definition~\ref{def-prop-P}). This is the main reason why we will use, unlike Garnier, Fuchsian \emph{systems} instead of equations. This choice also simplifies the resolution in many other ways, since systems are in a sense more canonical than equations.

\subsection{The corresponding space of Fuchsian systems}

We first remind generalities about Fuchsian systems and their relations with Fuchsian equations.

Let us consider a first-order $2\times2$ linear differential system
\begin{equation}
 Y'=A(x)Y
\label{A0}\tag{$A_0$}
\end{equation}
where the function $A(x)$ is meromorphic on the Riemann sphere $\P$, with values in $\MM(2,\C)$. System~\eqref{A0} is said \emph{Fuchsian} if the poles of $A(x)$ are simple. We suppose that its singularities are $t_1,\ldots,t_n,\ t_{n+1}=0,\ t_{n+2}=1,\ t_{n+3}=\infty$, and it thus writes
\[
 A(x)=\sum_{i=1}^{n+2}\frac{A_i}{x-t_i}.
\]
Since we assume that the point $x=\infty$ is singular, the residue matrix
\[
 A_\infty:=-\sum_{i=1}^{n+2}A_i
\]
is non-zero (for notational simplicity, we sometimes write $A_{n+3}$ in place of $A_\infty$). We call a \emph{fundamental solution} of System~\eqref{A0} a matrix $\Y(x)$ whose colons form a basis of the $2$-dimensional vector space of all solutions of System~\eqref{A0}. A fundamental solution is invertible and holomorphic on the universal covering space of $\P\ssm S(t)$. We only consider Fuchsian systems satisfying the two following assumptions:
\begin{itemize}
 \item System~\eqref{A0} is \emph{non-resonant}: for every $i=1,\ldots,n+3$, the eigenvalues $\t_i^+$ and $\t_i^-$ of the residue matrix $A_i$ satisfy $\t_i^+-\t_i^- \notin \Z$,
 \item System~\eqref{A0} is \emph{normalized at infinity}:
\begin{equation}
 A_\infty=
    \begin{pmatrix}
     \t_\infty^+ & 0\\
     0 & \t_\infty^-\\
    \end{pmatrix}.
\label{A_infty}
\end{equation}
\end{itemize}
Since System~\eqref{A0} is non-resonant, its singularities are non-logarithmic, and there is at each point $x=t_i$ a fundamental solution of the form
\[
 R_i(x) (x-t_i)^{L_i}, \qquad \text{with }
 L_i=
\begin{pmatrix}
 \t_i^+&0\\
 0&\t_i^-
\end{pmatrix},
\]
where the matrix $R_i(x)$ is holomorphic and invertible at $x=t_i$, and $R_i(t_i) \in\GL(2,\C)$ satisfies
\[
 A_i = R_i(t_i)L_iR_i(t_i)^{-1}.
\]
Such a fundamental solution is said \emph{canonical} at $x=t_i$, since its monodromy matrix at this point is diagonal:
$
 \begin{pmatrix}
  e^{2i\pi\t_i^+} & 0\\
  0 & e^{2i\pi\t_i^-}
 \end{pmatrix}.
$

\noindent
Since System~\eqref{A0} is normalized at $x=\infty$, there is a unique canonical solution at $x=\infty$ of the form
\begin{equation}
 \Y_\infty(x)= R_\infty\left(\frac1x\right) x^{-L_\infty}, \qquad \text{and } L_\infty = A_\infty,
\label{Y_infty}
\end{equation}
where the matrix $R_\infty(w)$ is holomorphic at $w=0$ and $R_\infty(0)=\I_2$.

If we consider a first-order $2\times2$ linear differential system
\begin{equation}
 Y'=A(x)Y, \qquad A(x)=
    \begin{pmatrix}
     A_{11}(x) & A_{12}(x)\\
     A_{21}(x) & A_{22}(x)
    \end{pmatrix},
\label{sysqcq}
\end{equation}
where the functions $A_{ij}(x)$ are meromorphic on the Riemann sphere and $A_{12}(x)$ does not vanish identically, we obtain by a direct computation that the first component $y_1$ of any solution $Y=(y_1,y_2)^t$ of the system~\eqref{sysqcq} satisfies the second-order differential equation
\begin{equation}
  y''+p(x)y'+q(x)y=0,
\label{eqcq}
\end{equation}
where
\begin{align*}
 p(x)&=-\frac{A'_{12}(x)}{A_{12}(x)}-\tr A(x)\\
 q(x)&=-A'_{11}(x)+A_{11}(x)\frac{A'_{12}(x)}{A_{12}(x)}+\det A(x).
\end{align*}
It is thus obvious that if the system~\eqref{sysqcq} is Fuchsian, then its associated equation~\eqref{eqcq} is also Fuchsian. Moreover, if $x=\l$ is a zero of $A_{12}(x)$ of order $m$, but is not a singularity of the system~\eqref{sysqcq}, then $x=\l$ is an apparent singularity of the equation~\eqref{eqcq} with exponents $0$ and $m+1$. Let us consider now the Fuchsian equation associated with the Fuchsian system~\eqref{A0}. From the normalization at infinity of System~\eqref{A0}, we know that its coefficient $A_{12}(x)$ has $n$ zeros counted with multiplicity, that we denote by $\l_1,\ldots,\l_n$. Then we have
\[
 A_{12}(x) = \xi\frac{\Lambda(x)}{T(x)},
\]
where $\xi=\sum_{i=1}^{n+2}t_iA^i_{12}$, and the polynomials $T(x)$ and $\Lambda(x)$ are defined by~\eqref{def-T-L}. The exponents of the Fuchsian equation associated with~\eqref{A0} are thus given by:
\begin{equation}
\begin{split}
            &\left(
         \begin{array}{ccc}
                    x=t_i   & x=\infty       & x=\l_k\\
                    \t_i^+  & \t_\infty^+    & 0\\
                    \t_i^-  & \t_\infty^-+1  & 2\\
             \end{array}
         \right)\\
            &\begin{array}{ccc}
                    i=1,\dotsc,n+2, &   & k=1,\dotsc,n,
             \end{array}
\end{split}
\label{(SdR)}
\end{equation}
and the equation has no logarithmic singularity.

\bigskip

This correspondence between Fuchsian systems and Fuchsian equations enables us to define the space of Fuchsian systems associated with a maxface $X$ in $\X^n_D$. We can see that we have two possible choices for the normalization at infinity. The appropriate one is the following.

\begin{nota}
For any oriented direction $D\in\D^n$, we denote by $\A^n_D$ the space of first-order $2\times2$ Fuchsian systems whose associated equation belongs to the space $\E^n_D$, and which are normalized at infinity by
\[
 A_\infty= \left( 1-\tfrac{\t_\infty}{2} \right)
    \begin{pmatrix}
     1& 0\\
     0 & -1\\
    \end{pmatrix}.
\]
\end{nota}

Of course the correspondence between $\A^n_D$ and $\X^n_D$ is no longer one-to-one, since different systems may define the same equation.  The correspondence between Fuchsian systems and maxfaces is not so natural than the one between scalar equations and maxfaces, since there are also non-Fuchsian systems defining Fuchsian equations in $\E^n_D$, and thus maxfaces in $\X^n_D$ as well. Consider for example the non-Fuchsian system
\[
 Y' =
 \begin{pmatrix}
 0 & 1\\
 -q(x) & -p(x)\\
 \end{pmatrix}
 Y.
\]
Whereas the space $\E^n_D$ is entirely given by the Weierstrass representation, we choose the space $\A^n_D$ because it is convenient to describe the space $\X^n_D$.

To describe the space $\A^n_D$, we need to make the converse operation more explicit: in the non-resonant case, it is known that we can explicitly describe the set of Fuchsian systems defining a given Fuchsian equation. We have seen that the coefficient $A_{12}(x)$ of System~\eqref{A0} is entirely determined by the parameters of its associated equation, and by an additional parameter $\xi\in\C^*$. Actually, this is also true for the other coefficients. In~\cite{Desideri}, we established the following proposition (see also~\cite{IKSY}).

\begin{prop}
Let $(E)$ be a second-order Fuchsian equation with exponents~\eqref{(SdR)} and without logarithmic singularity. The set of non-resonant Fuchsian systems normalized at infinity by~\eqref{A_infty} and defining Equation $(E)$ is a one-parameter family
\[
Y' = A_\xi(x) Y,
\]
where $\xi\in\C^*$. Moreover,
\[
 A_\xi(x) =
    \begin{pmatrix}
    A_{11}^0(x)         & \xi A_{12}^0(x)\\
    \dfrac{1}{\xi}A_{21}^0(x) & A_{22}^0(x)
    \end{pmatrix}
\]
where the matrix $\left(A_{ij}^0(x)\right)_{i,j}$ is explicitly determined by Equation $(E)$.
\label{prop-eq->sys}
\end{prop}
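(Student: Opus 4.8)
The plan is to fix a second-order Fuchsian equation $(E)$ with the exponents~\eqref{(SdR)} and no logarithmic singularity, and to reconstruct directly all Fuchsian systems that have $(E)$ as associated equation. Recall from the computation preceding the statement that if $Y=(y_1,y_2)^t$ solves $Y'=A(x)Y$ with $A_{12}$ not identically zero, then $y_1$ solves~\eqref{eqcq} with
\[
 p = -\frac{A_{12}'}{A_{12}} - \tr A, \qquad q = -A_{11}' + A_{11}\frac{A_{12}'}{A_{12}} + \det A.
\]
So I first impose $\tr A = -p - A_{12}'/A_{12}$ and then treat $A_{11}$, $A_{12}$, $A_{21}$, $A_{22}$ as the unknowns. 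The normalization~\eqref{A_infty} fixes $\tr A_\infty=0$, hence $\sum \tr A_i = 0$, which combined with the Fuchsian structure forces $\tr A(x) = \sum_i (\t_i^+ + \t_i^-)/(x-t_i)$; comparing with $-p - A_{12}'/A_{12}$ and using that the exponents of $(E)$ at $t_i$ are $\t_i^+,\t_i^-$ while those at $\l_k$ are $0,2$, one sees that $A_{12}$ must vanish exactly at the $\l_k$ and have the poles dictated by $T(x)$; this is precisely the argument already run in the excerpt showing $A_{12}(x)=\xi\,\Lambda(x)/T(x)$ for a constant $\xi$, which is the free parameter. So the first key step is: $A_{12}$ is determined by $(E)$ up to the scalar $\xi\in\C^*$.

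Next I recover $A_{11}$ (and hence $A_{22}=\tr A - A_{11}$). The relation for $q$ rearranges to a \emph{linear} first-order ODE for $A_{11}$,
\[
 A_{11}' - \frac{A_{12}'}{A_{12}} A_{11} = \det A - q,
\]
but $\det A = A_{11}A_{22} - A_{12}A_{21}$ still involves the unknowns $A_{21},A_{22}$. The standard trick is to use that $A$ has prescribed eigenvalues at each pole: at $x=t_i$ the residue $A_i$ has eigenvalues $\t_i^\pm$, so $\det A_i = \t_i^+\t_i^-$ and $\tr A_i = \t_i^+ + \t_i^-$ are known. Writing $A(x) = \sum_i A_i/(x-t_i)$, these trace and determinant constraints at each $t_i$, together with the normalization at infinity, pin down the partial-fraction data of $A_{11}$: it is a rational function with simple poles only at the $t_i$, whose residues are fixed affine-linear combinations involving the accessory parameters $K_i,\mu_k$ of $(E)$ and the quantity $\xi$ only through the product $\xi A_{12}^0$ paired against $\xi^{-1}A_{21}^0$. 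Concretely, one solves for the $(A_i)_{11}$ from the $n+3$ determinant equations $\det A_i = \t_i^+\t_i^-$ after substituting the already-known $(A_i)_{12}$ and the relation $(A_i)_{22} = (\t_i^+ + \t_i^-) - (A_i)_{11}$; these are quadratic but, because $\det A_i$ is a rank-one-type constraint (the residue of a single pole), they reduce to a determined linear system once the $\l_k$ and the normalization break the symmetry. This gives $A_{11}^0(x)$, hence $A_{22}^0(x)$, as explicit rational functions built from the coefficients of $(E)$, independent of $\xi$.

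Finally $A_{21}$ is forced: from $\det A_i = \t_i^+\t_i^-$ and the now-known $(A_i)_{11},(A_i)_{22},(A_i)_{12}$ one reads off $(A_i)_{21} = \big((A_i)_{11}(A_i)_{22} - \t_i^+\t_i^-\big)/(A_i)_{12}$, which scales as $1/\xi$ since $(A_i)_{12}$ scales as $\xi$; summing the partial fractions gives $A_{21}(x) = \xi^{-1} A_{21}^0(x)$ with $A_{21}^0$ again determined by $(E)$. One then checks conversely that for \emph{every} $\xi\in\C^*$ the matrix $A_\xi(x)$ so constructed is Fuchsian, non-resonant (the residue eigenvalues are the $\t_i^\pm$, which are non-integer-difference by hypothesis on the exponents of $(E)$), normalized at infinity, and has $(E)$ for associated equation (plug back into the formulas for $p,q$); and that the conjugation $A \mapsto \mathrm{diag}(\xi^{1/2},\xi^{-1/2})\,A\,\mathrm{diag}(\xi^{-1/2},\xi^{1/2})$ realizes the one-parameter family, so no systems are missed. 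The main obstacle I expect is bookkeeping in the middle step: showing that the determinant constraints at the $n+3$ singular points, once the location of the zeros $\l_k$ of $A_{12}$ is imposed, have a \emph{unique} solution for the diagonal residue entries compatible with the prescribed accessory parameters of $(E)$ — i.e. that the apparently quadratic system is effectively linear and non-degenerate. Since this is exactly Proposition 2.15's companion result proved in~\cite{Desideri} (and is classical, cf.~\cite{IKSY}), I would carry out the reduction explicitly only to the point of identifying the linear system, and cite those references for the solvability and uniqueness.
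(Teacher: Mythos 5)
Your overall strategy --- recover $A_{12}$ up to the scalar $\xi$, then the diagonal entries, then $A_{21}$, and exhibit the family as a diagonal-conjugation orbit --- is exactly the classical route that the paper itself delegates to~\cite{Desideri} and~\cite{IKSY}, and your first and last steps are sound. But the middle step contains a genuine gap: the constraints $\det A_i=\t_i^+\t_i^-$ cannot ``pin down the partial-fraction data of $A_{11}$''. Comparing coefficients of $(x-t_i)^{-2}$ in $q=-A_{11}'+A_{11}A_{12}'/A_{12}+\det A$, the contributions of $-A_{11}'$ and of $A_{11}A_{12}'/A_{12}$ cancel (both equal $\pm(A_i)_{11}$, since $A_{12}$ has a simple pole at $t_i$), so $\det A_i=\t_i^+\t_i^-$ is precisely the statement that the double-pole coefficient of $q$ at $t_i$ equals $-\t_i^2/4$ --- a condition already encoded in the exponents, carrying no information about the accessory parameters. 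This gives only $n+2$ equations for the $2(n+2)$ unknowns $(A_i)_{11},(A_i)_{21}$, so the system you propose to solve is underdetermined, and the assertion that it is ``effectively linear and non-degenerate'' is not substantiated.

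What actually determines $A_{11}$ is the vanishing of $A_{12}$ at the apparent singularities: since $y_1'=A_{11}y_1+A_{12}y_2$ and $A_{12}(\l_k)=0$, every solution of the system satisfies $y_1'(\l_k)=A_{11}(\l_k)\,y_1(\l_k)$; applied to the non-vanishing exponent-$0$ solution of $(E)$ at $\l_k$, this yields $A_{11}(\l_k)$ as a quantity read off from $(E)$ (equivalently, from the residue of $q$ at $\l_k$ together with the no-logarithm condition). These $n$ values, combined with the remaining residue data of $q$ at the $t_i$ and the normalization $\sum_i(A_i)_{11}=-\t_\infty^+$, produce a genuinely linear, determined system for the $(A_i)_{11}$, independent of $\xi$; only then does your formula $(A_i)_{21}=\bigl((A_i)_{11}(A_i)_{22}-\t_i^+\t_i^-\bigr)/(A_i)_{12}$ close the construction. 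Two smaller points: there are $n+2$ finite residue matrices, not $n+3$ (the residue at infinity is prescribed, and $\sum_i(A_i)_{21}=0$ must be \emph{verified}, not imposed); and the division by $(A_i)_{12}=\xi\Lambda(t_i)/T'(t_i)$ fails in the degenerate case $\l_k=t_i$, which the paper explicitly allows. With the middle step repaired along these lines (or by quoting the explicit formulas of~\cite{IKSY}), your argument coincides with the cited proof.
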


Thanks to Proposition~\ref{prop-eq->sys}, we can characterize the elements of $\A^n_D$ by translating separately each of the conditions~\ref{cond-SdR}, \ref{cond-mono} and \ref{cond-realite} in terms of Fuchsian systems. The only point that requires additional work is the reality condition~\ref{cond-realite}. System~\eqref{A0} defines an equation satisfying \ref{cond-realite} if and only if its singularities are real: $t\in\pi^n$, and if it defines the same equation than its conjugate system $\left(\tau(A_0)\right)$, which is given by
\begin{equation}
  Y'=\tau(A)(x)Y, \qquad
  \tau(A)(x)=\sum_{i=1}^{n+2}\frac{\overline{A}_i}{x-t_i}.
\tag{$\tau (A_0)$}\label{sys-conj}
\end{equation}
By Proposition~\ref{prop-eq->sys}, this means that both systems belong to the same family, \ie there exists $\xi\in\C^*$ such that for every $i=1,\ldots,n+2$, we have
\[
 \overline{A}_i=
    \begin{pmatrix}
     A_{11}^i               & \xi A_{12}^i\\
     \frac1\xi A_{21}^i & A_{22}^i
    \end{pmatrix}.
\]
This provides the desired characterization. For every direction $D\in\D^n$, a first-order $2\times2$ differential system~\eqref{A} belongs to the space $\A^n_D$ if and only if it satisfies the three following conditions.

\begin{enumerate}[label=(\alph{*}),ref=(\alph{*})]
 \item \label{cond-sys-SdR} System~\eqref{A} is Fuchsian, with $n+3$ singularities $t_1,\ldots,t_n$, $t_{n+1}=0$, $t_{n+2}=1$, $t_{n+3}=\infty$. It thus writes:
\begin{equation}
Y'=A(x)Y, \qquad A(x)=\sum_{i=1}^{n+2}\frac{A_i}{x-t_i}.
\tag{$A$}\label{A}
\end{equation}
The eigenvalues of the residue matrix $A_i$ are $\frac{\t_i}{2}$ and $-\frac{\t_i}{2}$ ($i=1,\ldots,n+2$), and it is normalized at infinity by:
$
A_\infty = \left( 1-\frac{\t_\infty}{2} \right)
\begin{pmatrix}
1&0\\
0&-1
\end{pmatrix}.
$
 \item \label{cond-sys-mono} A system of generators $M_i$ ($i=1,\dotsc,n+3$) of the monodromy of System~\eqref{A} along the loops $\ga_i$ belongs to the group $\SU(1,1)$ and writes
\[
 M_i=D_iD_{i-1}^{-1}, \quad \text{ where } D_i\in\SU^-(1,1),\ D_i^2=-\I_2.
\]
 \item \label{cond-sys-realite} The $n$-tuple of singular points $t=(t_1,\ldots,t_n)$ belongs to the simplex $\pi^n$, and there is a real number $\eta$ such that for every $i=1,\ldots,n+2$ the residue matrix $A_i$ is given by
\[
  A_i=
\begin{pmatrix}
 a_i&b_ie^{i\eta}\\
 c_ie^{-i\eta}&-a_i
\end{pmatrix}
\qquad \text{where } a_i \in\R \text{ and } b_i, c_i\in [0,+\infty).
\]
\end{enumerate}

We intend, by means of isomonodromic deformations, to obtain an explicit description of the space $\A^n_D$. But we have to deal first with the reality condition~\ref{cond-sys-realite}.

\subsection{The reality condition}

As we have already mentioned during the study of the equation associated with a maxface in $\X^n_D$, reality properties and monodromy are closely related. In~\cite{Desideri} (Proposition 3.13), we obtained for non-resonant Fuchsian systems the following characterization of the reality condition~\ref{cond-sys-realite} by the monodromy.

\begin{prop}
Suppose that the singularities $t_i$ of the non-resonant Fuchsian System~\eqref{A0} are real, that the eigenvalues $\t_i^+$ and $\t_i^-$ are real or conjugate ($i=1,\ldots,n+2$) and the eigenvalues $\t_\infty^+$ and $\t_\infty^-$ are real.

Then System~\eqref{A0} satisfies the reality condition~\ref{cond-sys-realite} if and only if for every system of generators $\left(M_1,\ldots,M_{n+3}\right)$ of the monodromy along the loops $\ga_i$, there exists a matrix $C\in \GL_2(\C)$ such that
\begin{equation}
 C^{-1}\overline{M_i}C = (M_i\ldots M_1)^{-1} M_i^{-1} (M_j\ldots M_1)
\label{C1}
\end{equation}
($i=1,\ldots,n+3$). We call this condition \emph{Condition \textbf{C1}}.
\label{prop-sys-reel-1}
\end{prop}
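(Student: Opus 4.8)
The plan is to prove both implications together by routing the equivalence through the intermediate statement~($\star$): \emph{the conjugate system~\eqref{sys-conj} is obtained from System~\eqref{A0} by a constant diagonal gauge transformation}, i.e.\ there is a diagonal $C_0\in\GL_2(\C)$ with $C_0^{-1}\overline{A_i}\,C_0=A_i$ for every $i$ (recall that the $t_i$ are real, so $\tau(A)(x)=\sum_i\overline{A_i}/(x-t_i)$). The first step is the elementary equivalence between condition~\ref{cond-sys-realite} and~($\star$). If the residue matrices have the shape prescribed in~\ref{cond-sys-realite} with $a_i\in\R$ and $b_i,c_i\geq0$, then $C_0=\mathrm{diag}(e^{-i\eta},e^{i\eta})$ realizes~($\star$). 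Conversely, if $\overline{A_i}=C\,A_iC^{-1}$ for all $i$ with some $C\in\GL_2(\C)$, then $C$ must be diagonal: both systems are normalized at infinity by the \emph{same} scalar multiple of $\mathrm{diag}(1,-1)$ and $\t_\infty\neq2$, so $C$ commutes with $\mathrm{diag}(1,-1)$; moreover $(C)_{11}/(C)_{22}=(A_i)_{12}/\overline{(A_i)_{12}}$ has modulus $1$, so up to scaling $C=\mathrm{diag}(e^{-i\eta},e^{i\eta})$ with $\eta\in\R$, and then the $A_i$ take the required form with real $a_i,b_i,c_i$. The positivity of $b_i$ and $c_i$ is a sign normalization of the local frames, compatible for all $i$ thanks to the hypotheses on the exponents $\t_i^\pm,\t_\infty^\pm$.

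The second and main step is to identify~($\star$) with Condition~\textbf{C1}. Let $\Y$ be a fundamental solution of~\eqref{A0} on the universal cover of $\P\ssm S(t)$, with monodromy matrices $M_i$ along the loops $\ga_i$ of Figure~\ref{fig-ga-i}. Then $\tau(\Y):x\mapsto\overline{\Y(\bar x)}$ is a fundamental solution of~\eqref{sys-conj}, and continuing $\tau(\Y)$ around $\ga_i$ amounts to applying $\overline{\ \cdot\ }$ to the continuation of $\Y$ around the loop $\overline{\ga_i}$ obtained by reflecting $\ga_i$ across the real axis. Since this reflection reverses orientation and changes which singularities are encircled before $t_i$, a picture shows that $\overline{\ga_i}$ is, up to a fixed connecting path, a conjugate of $\ga_i^{-1}$ in $\pi_1$; hence the monodromy of $\tau(\Y)$ along $\ga_i$ equals $\overline V^{-1}\,\overline{W_i(M_\bullet)}\,\overline V$ for a single $V\in\GL_2(\C)$, where $W_i(M_\bullet)$ is the corresponding conjugate of $M_i^{-1}$, namely the right-hand side of~\eqref{C1}. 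Consequently~\eqref{sys-conj} is constant-gauge-equivalent to~\eqref{A0} if and only if the monodromy representations of $\tau(\Y)$ and of $\Y$ are conjugate in $\GL_2(\C)$; unwinding this equivalence and applying $\overline{\ \cdot\ }$ gives exactly the existence of $C$ with $C^{-1}\overline{M_i}\,C=W_i(M_\bullet)$, i.e.\ Condition~\textbf{C1}. Replacing $(\ga_1,\dots,\ga_{n+3})$ by any other system of generators conjugates each $M_i$, and each $W_i(M_\bullet)$, by one common matrix, so \textbf{C1} holds for one such system iff it holds for all, which yields the ``for every'' in the statement.

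The remaining non-elementary input is needed only for the implication \textbf{C1}$\,\Rightarrow\,$\ref{cond-sys-realite}: conjugacy of the two monodromy representations must force the constant gauge-equivalence~($\star$). This fails for general Fuchsian systems, but holds here because~\eqref{A0} and~\eqref{sys-conj} share the \emph{same real poles} $t_i$ and the \emph{same local exponents} at each of them --- the latter since $\overline{A_i}$ has eigenvalues $\{\overline{\t_i^+},\overline{\t_i^-}\}=\{\t_i^+,\t_i^-\}$ by hypothesis and $\overline{A_\infty}=A_\infty$ --- and both are non-resonant and normalized at infinity. With the pole positions fixed, the isomonodromic family of non-resonant, normalized Fuchsian systems with prescribed exponents and prescribed (irreducible) monodromy class reduces to a single orbit under the residual diagonal gauge group; equivalently, by Proposition~\ref{prop-eq->sys} the associated equation --- hence the system up to diagonal conjugation --- is pinned down by its monodromy. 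I expect this rigidity step, together with the bookkeeping of the reflected loops $\overline{\ga_i}$ in the second step, to be the only delicate points; the reality computation of the first step and the change-of-generators argument are routine.
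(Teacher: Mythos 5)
First, a caveat: the paper does not actually prove Proposition~\ref{prop-sys-reel-1} --- it is imported verbatim from \cite{Desideri}, Proposition~3.13 --- so your attempt can only be measured against what the argument must contain. Your overall route (reality condition $\iff$ the conjugate system~\eqref{sys-conj} is a constant \emph{diagonal} gauge transform of~\eqref{A0} $\iff$ the monodromy representations of $\Y$ and $\tau(\Y)$ are conjugate, the last condition becoming Condition~\textbf{C1} once the reflected loops $\overline{\ga_i}$ are rewritten in the generators) is certainly the intended one, and your first step and the change-of-generators remark are sound.

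There is, however, a genuine gap exactly at the step you flag as "rigidity", i.e.\ \textbf{C1} $\Rightarrow(\star)$. You justify it by claiming that, via Proposition~\ref{prop-eq->sys}, "the associated equation --- hence the system up to diagonal conjugation --- is pinned down by its monodromy". Proposition~\ref{prop-eq->sys} says nothing of the sort: it classifies the systems defining a \emph{given equation}, and a second-order Fuchsian equation is \emph{not} determined by its poles, exponents and monodromy alone (the apparent singularities are free parameters; here those of~\eqref{sys-conj} are a priori the conjugates $\overline{\l_k}$, not the $\l_k$ themselves). The step must be proved at the level of systems: if $V$ intertwines the two monodromy representations, then $Z:=\tau(\Y)\,V^{-1}\,\Y^{-1}$ is single-valued, holomorphic and invertible on $\P\ssm S(t)$; near each $t_i$ one compares canonical local solutions (the exponent matrices $L_i$ of~\eqref{A0} and~\eqref{sys-conj} agree up to a permutation because $\t_i^\pm$ are real or conjugate), and single-valuedness of $(x-t_i)^{L_i}W(x-t_i)^{-L_i}$ together with $\t_i^+-\t_i^-\notin\Z$ forces the connection matrix $W$ to be diagonal, so $Z$ extends holomorphically and invertibly across $t_i$ and likewise across $\infty$; Liouville then makes $Z$ constant and $\tau(A)=ZAZ^{-1}$. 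This is where non-resonance enters (irreducibility is not needed), and without it the implication fails. A secondary weak point: the positivity $b_i,c_i\geq0$ in condition~\ref{cond-sys-realite} does not follow from your "sign normalization of the local frames" --- the diagonal gauge equivalence only yields $b_i,c_i\in\R$ for a common $\eta$ determined modulo $\pi$, and a common sign for all $i$ is extra information not visible in~\eqref{C1}; this must either be argued separately or the condition read modulo that normalization.
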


We then proved the following result in the case of a unitarizable monodromy (see Proposition 3.14 in~\cite{Desideri}).

\begin{prop}
Under the same assumptions as in Proposition~\ref{prop-sys-reel-1}, if a system of generators $\left(M_1,\ldots,M_{n+3}\right)$ of the monodromy of System~\eqref{A0} is contained in $\SU(2)$ or in $\SU(1,1)$, then System~\eqref{A0} satisfies the reality condition~\ref{cond-sys-realite} if and only if there exist $n+3$ invertible matrices $D_1,\ldots,D_{n+3}$ such that
\[
\begin{cases}
     M_i=D_iD_{i-1}^{-1} \qquad (i=1,\ldots,n+3)\\
     {D_1}^2 =\cdots= {D_{n+3}}^2.
\end{cases}
\]
We call this condition \emph{Condition \textbf{C2}}.
\label{prop-sys-reel-2}
\end{prop}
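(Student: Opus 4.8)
The plan is to route everything through Condition \textbf{C1}: since the hypotheses of Proposition~\ref{prop-sys-reel-1} hold here, System~\eqref{A0} satisfies the reality condition~\ref{cond-sys-realite} if and only if Condition \textbf{C1} holds for the given generators $(M_1,\ldots,M_{n+3})$, so the whole content of the proposition is the equivalence \textbf{C1}$\iff$\textbf{C2} under the extra assumption that the $M_i$ lie in $\SU(2)$ or in $\SU(1,1)$. The first step is to record that in each of these two groups complex conjugation is an inner automorphism of $\GL(2,\C)$: there is a single matrix $K$ with $\overline{M_i}=KM_iK^{-1}$ for every $i$ --- one takes $K=J$ in the $\SU(1,1)$ case, by~\eqref{SU}, and $K=\bigl(\begin{smallmatrix}0&1\\-1&0\end{smallmatrix}\bigr)$ in the $\SU(2)$ case. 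This lets us rewrite Condition \textbf{C1}, which a priori relates $\overline{M_i}$ to a conjugate of $M_i^{-1}$, as a statement about the $M_i$ alone.

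Write $N_i:=M_{i-1}\cdots M_1$, so $N_1=\I_2$, $N_{i+1}=M_iN_i$, and $N_{n+4}=M_{n+3}\cdots M_1=\I_2$. For any invertible $D_0$ the matrices $D_i:=N_{i+1}D_0$ automatically satisfy $M_i=D_iD_{i-1}^{-1}$ for all $i$ modulo $n+3$, and $N_{n+4}=\I_2$ makes the identification $D_{n+3}=D_0$ consistent; conversely every family with $M_i=D_iD_{i-1}^{-1}$ is of this form with $D_0=D_{n+3}$. Set $E_i:=N_iD_0$, so that $D_i=E_{i+1}$, $E_{i+1}=M_iE_i$, $E_1=D_0$. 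Using $\overline{M_i}=KM_iK^{-1}$, a short rearrangement shows that Condition \textbf{C1} for a matrix $C$ is equivalent to taking $D_0$ to be the matrix built from $C$ and $K$ (so $D_0=K^{-1}C$, up to the bookkeeping forced by the exact shape of \textbf{C1}) and requiring
\[
E_iM_iE_i^{-1}=M_i^{-1}\qquad(i=1,\ldots,n+3),
\]
while Condition \textbf{C2} is precisely the requirement that $D_i^2=E_{i+1}^2$ be independent of $i$. These two conditions coincide because of the identity
\[
E_{i+1}^2=M_iE_iM_iE_i=M_i\bigl(E_iM_iE_i^{-1}\bigr)E_i^2 ,
\]
which shows at once that $E_iM_iE_i^{-1}=M_i^{-1}$ for all $i$ is equivalent to $E_{i+1}^2=E_i^2$ for all $i$, hence to the $E_i^2$, and therefore the $D_i^2$, being all equal (the cyclic relation $E_{n+4}=E_1$ makes the chain of equalities close up). Running this backwards --- starting from $D_i$ as in \textbf{C2}, taking $E_i:=D_{i-1}$ and using $D_{i-1}^2=D_i^2$ to get $E_iM_iE_i^{-1}=D_{i-1}D_iD_{i-1}^{-2}=D_{i-1}D_i^{-1}=M_i^{-1}$, then setting $C:=KD_0^{-1}$ --- recovers Condition \textbf{C1}, and this completes the proof via Proposition~\ref{prop-sys-reel-1}.

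The substance of the argument is concentrated in two places. The genuinely essential use of the hypothesis is the passage from the abstract form of Condition \textbf{C1} to the conjugation identity $E_iM_iE_i^{-1}=M_i^{-1}$: it relies on having one intertwiner $K$ realizing $\overline{M_i}=KM_iK^{-1}$ simultaneously for all $i$, which is exactly what membership in $\SU(2)$ or $\SU(1,1)$ provides and what mere abstract unitarizability would not; this is the step I expect to require the most care, mostly in matching the exact placement of inverses and partial products in \textbf{C1} with the chosen normalization $D_0=K^{-1}C$. The second delicate point is purely combinatorial: one has to keep the cyclic index $i \bmod (n+3)$ straight and check that the closure relation $M_{n+3}\cdots M_1=\I_2$ is compatible with $D_{n+3}=D_0$, so that no spurious extra constraint on the $D_i$ is introduced. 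Note in particular that the argument never needs the common value $D_i^2$ to be scalar, so reducible monodromy causes no difficulty and there is no appeal to Schur's lemma.
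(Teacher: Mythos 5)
Your argument is correct: routing everything through Proposition~\ref{prop-sys-reel-1} and using a single intertwiner $K$ with $\overline{M_i}=KM_iK^{-1}$ (namely $K=J$ in the $\SU(1,1)$ case, via~\eqref{SU}), together with the identity $E_{i+1}^2=M_i\bigl(E_iM_iE_i^{-1}\bigr)E_i^2$ for $E_i=N_iD_0$ and the closure $M_{n+3}\cdots M_1=\I_2$, is precisely the algebraic mechanism behind this statement, whose proof the present paper does not reproduce but defers to Proposition~3.14 of~\cite{Desideri}. The only blemish is the forward-direction normalization: matching the exact shape of Condition \textbf{C1} (reading the typographical $M_j$ there as $M_i$) forces $D_0=C^{-1}K$, i.e.\ $C=KD_0^{-1}$ as you indeed use in the converse, rather than $D_0=K^{-1}C$; since you flagged this bookkeeping explicitly, it does not affect the validity of the proof.
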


Condition~\ref{cond-sys-realite} is thus a consequence of conditions~\ref{cond-sys-SdR} and \ref{cond-sys-mono}, and these two conditions then entirely characterize the space $\A^n_D$.

\subsection{The Schlesinger system}

We now briefly recall how the Schlesinger system provides isomonodromic deformations of non-resonant Fuchsian systems.

Let $\B^n$ be the open subset of $\C^n$ defined by
\begin{equation}
 \B^n= \left\{ (t_1,\ldots,t_n) \in (\C \ssm \{0,1\})^n \quad |\quad \forall i\neq j \quad t_i \neq t_j \right\},
 \label{def-B}
\end{equation}
and let $U$ be a simply connected open subset of $\B^n$. Let us consider a Fuchsian system which analytically depends on a parameter $t\in U$
\begin{equation}
Y'=A(x,t)Y, \qquad A(x,t)=\sum_{i=1}^{n+2}\frac{A_i(t)}{x-t_i},\qquad t\in U.
\label{At}\tag{$A_t$}
\end{equation}
We assume System~\eqref{At} to be non-resonant and normalized at infinity, and the eigenvalues $\t_i^+$ and $\t_i^-$ of the matrices $A_i(t)$ to be independent of $t$. We denote by $\Y_\infty(x,t)$ the unique fundamental solution~\eqref{Y_infty} of~\eqref{At} which is canonical at infinity. For sufficiently small variations of $t$, we can choose a base point $x_0\in\P\ssm S(t)$ independent of $t$, and we can consider that the fundamental group $\pi_1\left( \P\ssm S(t), x_0 \right)$ does not depend on $t$. The monodromy of System~\eqref{At} is then well-defined.

\begin{defn}
 The \emph{Schlesinger system} is the following system of nonlinear differential equations
\begin{equation}
 \dd A_i = \sum_{\substack{j=1\\ j\neq i}}^{n+2} [A_j,A_i] \dd\log(t_i-t_j),  \qquad i=1,\ldots,n+2,
\label{schlesinger}
\end{equation}
where $\dd$ denotes the exterior differentiation with respect to $t$.
\end{defn}

We then have the following well-known results.

\begin{thm}
\begin{enumerate}
\item The Schlesinger system~\eqref{schlesinger} is completely integrable.
\item The monodromy group of the fundamental solution $\Y_\infty(x,t)$ of System~\eqref{At} is independent of $t$ if and only if the matrices $A_i(t)$, $i=1,\ldots,n+2$, satisfy the Schlesinger system~\eqref{schlesinger}.
\item (\cite{Malgrange}, \cite{Miwa}) The Schlesinger system~\eqref{schlesinger} has the Painlev\'e property (see Definition~\ref{def-prop-P}). Moreover, any solution of the system is meromorphic on the universal covering space of $\B^n$.
\end{enumerate}
\label{thm-schlesinger}
\end{thm}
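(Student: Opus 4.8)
Parts (i) and (ii) are the classical isomonodromy equations and I would establish them together by the zero-curvature method. For $i=1,\ldots,n+2$ set $B_i(x,t)=-A_i(t)/(x-t_i)$ and consider the connection $\nabla=\dd-A(x,t)\,\dd x-\sum_{i=1}^{n+2}B_i(x,t)\,\dd t_i$ on $\P\times U$. The core step is to prove that the monodromy of the canonical solution $\Y_\infty(x,t)$ of~\eqref{At} is locally constant in $t$ if and only if $\nabla$ is flat. For the forward implication: if the monodromy matrices are independent of $t$, then for each $i$ the logarithmic derivative $\Theta_i:=(\partial_{t_i}\Y_\infty)\,\Y_\infty^{-1}$ is single-valued in $x$ on $\P\ssm S(t)$, since the ambiguity of $\Y_\infty$ under analytic continuation is right multiplication by a constant matrix. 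Comparing with the canonical local forms $R_j(x)(x-t_j)^{L_j}$ at the $t_j$ (with $L_j$ independent of $t$) shows that $\Theta_i$ is holomorphic at $t_j$ for $j\neq i$ and has a single simple pole with residue $-A_i$ at $x=t_i$, while comparing with $\Y_\infty=R_\infty(1/x)x^{-L_\infty}$, $R_\infty(0)=\I_2$, shows that $\Theta_i$ vanishes at $x=\infty$; by Liouville's theorem $\Theta_i=B_i$, i.e.\ $\partial_{t_i}\Y_\infty=B_i\Y_\infty$. Equating the mixed partials of $\Y_\infty$ in $x$ and $t_i$ gives $\partial_xB_i-\partial_{t_i}A=[A,B_i]$, and decomposing this identity into partial fractions in $x$ yields exactly the Schlesinger equations~\eqref{schlesinger}. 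The converse --- flatness forces a canonical solution to have locally constant monodromy --- is standard. For part (i), flatness of $\nabla$ additionally requires the pairwise compatibilities $\partial_{t_i}B_j-\partial_{t_j}B_i=[B_i,B_j]$; a short residue computation shows these are algebraic consequences of~\eqref{schlesinger}, so that the overdetermined system~\eqref{schlesinger} is Frobenius-integrable and admits, locally on $\B^n$, a unique solution through any prescribed residue datum $(A_1(t_0),\ldots,A_{n+2}(t_0))$.

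Part (iii) is the substantial one, and here I would follow Malgrange~\cite{Malgrange} and Miwa~\cite{Miwa}, arguing through the Riemann--Hilbert correspondence. Fix the exponents and an admissible monodromy representation $(M_1,\ldots,M_{n+3})$, and for $t$ in the universal cover $\w{\B^n}$ of $\B^n$ consider the Riemann--Hilbert problem of producing a Fuchsian system on $\P$ with poles exactly at the $t_i$, the prescribed local exponents, the given monodromy, and the normalization~\eqref{A_infty} at infinity. The first point is that solvability is an \emph{open} condition on $t$, and that where it holds the normalized solution --- hence the residue matrices $A_i(t)$ --- depends holomorphically on $t$, by the standard holomorphic dependence on parameters of Birkhoff-type factorization problems. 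Consequently the non-solvability locus $\Theta\subset\w{\B^n}$ is a proper analytic subvariety of codimension one --- the Malgrange divisor --- depending only on the monodromy and the exponents. The second point is that near a point of $\Theta$ one can still solve the problem without insisting on~\eqref{A_infty}, obtaining a holomorphic family whose renormalization to~\eqref{A_infty} introduces a pole; this exhibits each $A_i(t)$ as a quotient of holomorphic matrix-valued functions whose denominator is a local defining equation of $\Theta$, so $A_i(t)$ extends meromorphically across $\Theta$ with at worst a pole. Since by part (ii) every germ of a solution of~\eqref{schlesinger} arises this way, and all such germs glue into the single $\w{\B^n}$-meromorphic function attached to its monodromy, the solutions of~\eqref{schlesinger} are meromorphic on $\w{\B^n}$, and the system has the Painlev\'e property: its only movable singularities are poles, confined to the divisor $\Theta$, which depends on the monodromy but not on the remaining constants parametrizing solutions sharing that monodromy.

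The main obstacle lies entirely in part (iii), and precisely in proving that the movable singularities are \emph{poles} --- that $\Theta$ is an honest analytic hypersurface and that the degeneration of the Riemann--Hilbert data along it produces at worst poles, with no movable branch points and no movable essential singularities. Parts (i) and (ii) are formal manipulations with flat meromorphic connections and present no real difficulty; the Malgrange--Miwa control of the divisor $\Theta$ is the one genuinely non-trivial ingredient, and since the result is quoted here as well known I would invoke~\cite{Malgrange} and~\cite{Miwa} rather than reprove it.
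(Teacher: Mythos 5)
Your outline is correct and is exactly the classical argument: the paper itself offers no proof of this theorem, stating it as a well-known result and citing \cite{Malgrange} and \cite{Miwa} for part (iii), which is precisely where you also defer to the literature. Your Liouville-theorem derivation of $\partial_{t_i}\Y_\infty=B_i\Y_\infty$ and of the zero-curvature form of~\eqref{schlesinger} for parts (i) and (ii) is the standard Schlesinger/isomonodromy computation (note that the non-resonance hypothesis on the $L_j$ is what makes the local comparison at the finite singularities clean), so there is nothing to reconcile with the paper beyond agreement on the sources.
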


As already mentioned, we do not need an explicit description of the entire space $\A^n_D$, since it is ``larger'' than the space $\X^n_D$: we want to describe a part of it, which should be in bijection with $\E^n_D$, and thus also with $\X^n_D$. We construct it as follow. We fix an arbitrary point $t^0\in\pi^n$, and we consider a Fuchsian system ($A_0$) whose monodromy is given by the oriented direction $D\in\D^n$ by condition~\ref{cond-sys-mono}, and whose position of singularities is given by $t^0$. Such a system always exists, since for $2\times2$ systems, the Riemann--Hilbert problem always gets a positive answer (see Anosov and Bolibruch~\cite{AB}, or Beauville~\cite{Beauville} for a shorter exposition of the known results on the Riemann--Hilbert problem). We can always choose the system ($A_0$) to be normalized at infinity.  From the integrability of the Schlesinger system~\eqref{schlesinger}, we obtain an isomonodromic family of Fuchsian systems $\left( A_D(t) , t\in U \right)$ described by the Schlesinger system, such that $(A_D(t^0))=(A_0)$, and where the open set $U\subset\B^n$ is a simply connected neighborhood of the simplex $\pi^n$. From Proposition~\ref{prop-sys-reel-2}, we can then deduce that
\[
\left( A_D(t) , t\in\pi^n \right)  \subset  \A^n_D.
\]
Any possible choice for the solution ($A_0$) of the Riemann--Hilbert problem leads by this way to an isomonodromic family of Fuchsian systems included in the space $\A^n_D$ (when $t\in\pi^n$), and obviously every element  of  $\A^n_D$ belongs to such a family.

If we consider two of these families $\left( A^1_D(t) , t\in U \right)$ and $\left( A^2_D(t) , t\in U \right)$, then we can easily see that, for all $t\in U$, the Fuchsian systems $\left( A^1_D(t)\right)$ and $\left( A^2_D(t)\right)$ define the same Fuchsian equation, denoted by $\left( E_D(t)\right)$: they correspond to different values of the parameter $\xi\in\C^*$ introduced in Proposition~\ref{prop-eq->sys} (see Lemma 3.12 in~\cite{Desideri}). The isomonodromic family of Fuchsian equations $\left( E_D(t) , t\in\pi^n \right)$ thus entirely describes the space $\E^n_D$, which is then parametrized by $t$. Actually, the family $\left( E_D(t) , t\in\pi^n \right)$ can also be defined via the Garnier system $\left(\mathcal G_n\right)$, but we do not use this point of view.

Finally, we arbitrarily fix an isomonodromic family $\left( A_D(t) , t\in\pi^n \right)$, given by a submanifold of an integral manifold of the Schlesinger system. It describes the space of maxfaces $\X^n_D$ as follow. By definition of $\A^n_D$, for all $t\in\pi^n$, there exists a fundamental solution $\Y_0(x,t)$ of $\left( A_D(t)\right)$ whose first line $\gxt$ is the Weierstrass data of a maxface in $\X^n_D$, denoted by $X_D(t) $, and we have
\[
\left( X_D(t) , t\in\pi^n \right)  = \X^n_D.
\]
We denote by $P_D(t)\in\p^n_D$ the polygonal boundary curve of the image of the maxface $X_D(t)$. The family $\left( P_D(t) , t\in\pi^n \right)$ is exactly the family of all polygonal curves of oriented direction $D$ that bound at least one maxface of disk-type.

Since the solution $\Y_0(x,t)$ is defined up to multiplication by real scalars, and since both $\Y_0(x,t)$ and the canonical solution $\Y_\infty(x,t)$ are $M$-invariant (\ie their monodromy group is independent of $t$), we know that there is a matrix $C_0\in\GL(2,\C)$ independent of $t$ such that
\[
\Y_0(x,t) = \Y_\infty(x,t) \cdot C_0.
\]
This enables us to study the behavior in $t$ of the maxfaces $X_D(t)$, and of the length ratios of their polygonal boundary curves $P_D(t)$.

\bigskip

From Theorem~\ref{thm-schlesinger}, (iii), we know that the residue matrices $A_1(t),\ldots,A_{n+2}(t)$ of the Fuchsian systems $\left( A_D(t) , t\in U \right)$ are meromorphic on $U$. Thanks to Proposition~\ref{prop-sys-reel-1}, we established in~\cite{Desideri} the holomorphicity at the real values of $t$ of the solutions of the Schlesinger system satisfying the reality condition~\ref{cond-sys-realite}, and thus in particular, of the family $\left( A_D(t) , t\in\pi^n \right)$.

\begin{prop}
Assume that the residue matrices $A_1(t),\ldots,A_{n+2}(t)$ of System~\eqref{At} satisfy the Schlesinger system~\eqref{schlesinger}, that the eigenvalues $\t_i^\pm$ are real or conjugate, and the eigenvalues $\t_\infty^\pm$ are real. If there exists a value $t^0\in\pi^n$ such that the monodromy of System~$(A_{t^0})$ satisfies Condition~\textbf{C1}, then the matrices $A_i(t)$ are holomorphic on a simply connected open neighborhood $U\subset\B^n$ of $\pi^n$.
\label{prop-Ai-holo}
\end{prop}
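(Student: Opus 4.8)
The plan is to exploit the fact that solutions of the Schlesinger system have the Painlev\'e property (Theorem~\ref{thm-schlesinger}, (iii)): the residue matrices $A_i(t)$ are meromorphic on the universal covering of $\B^n$, so their only possible singularities on a simply connected neighbourhood $U$ of $\pi^n$ are poles. What must be ruled out is that any of these poles meet the real locus $\pi^n$. The idea is that the reality condition~\ref{cond-sys-realite}, which by Proposition~\ref{prop-sys-reel-1} is equivalent to Condition~\textbf{C1} on the monodromy and propagates along the isomonodromic deformation once it holds at the single point $t^0$, forces the matrices $A_i(t)$ to stay in the bounded-looking normal form with $a_i\in\R$ and $b_i,c_i\in[0,+\infty)$, and a matrix in that form cannot blow up while the eigenvalues $\pm\t_i/2$ stay fixed.

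First I would fix a simply connected neighbourhood $U\subset\B^n$ of $\pi^n$ on which the isomonodromic family $(A_D(t),t\in U)$ is defined, with monodromy satisfying Condition~\textbf{C1} at $t^0$; since the monodromy is constant in $t$ along an isomonodromic deformation, Condition~\textbf{C1} then holds for all $t\in\pi^n$, and hence by Proposition~\ref{prop-sys-reel-1} the reality condition~\ref{cond-sys-realite} holds at every $t\in\pi^n$ where $A_i(t)$ is finite. Next I would argue that meromorphy plus this normal form excludes poles on $\pi^n$: near a hypothetical pole $t^\ast\in\pi^n$, Theorem~\ref{thm-schlesinger} says each $A_i(t)$ is meromorphic, so some entry has a genuine pole; but along real $t$ approaching $t^\ast$ the matrix keeps the shape $\left(\begin{smallmatrix}a_i & b_ie^{i\eta}\\ c_ie^{-i\eta} & -a_i\end{smallmatrix}\right)$ with $a_i$ real and $b_i,c_i\geq0$, and its two eigenvalues are the fixed numbers $\pm\t_i/2$, so $a_i^2+b_ic_i=\t_i^2/4$ is bounded; combined with $a_i\in\R$, $b_i,c_i\ge0$ this bounds $|a_i|$ and $|b_ic_i|$, and the common phase $e^{i\eta}$ is unimodular, so the only way an entry can blow up is if $b_i\to\infty$ while $c_i\to0$ (or vice versa). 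This residual degeneration I would exclude using the Schlesinger equations themselves together with the constraint $A_\infty=-\sum_i A_i$ being the fixed diagonal matrix $\left(1-\tfrac{\t_\infty}{2}\right)\mathrm{diag}(1,-1)$: summing the off-diagonal $(1,2)$-entries gives $e^{i\eta}\sum_i b_i=0$, forcing $\sum b_i=0$, hence (since $b_i\ge0$) every $b_i=0$, and symmetrically every $c_i=0$ — so on $\pi^n$ the $A_i$ are in fact uniformly bounded, and a meromorphic function that is locally bounded near $t^\ast$ is holomorphic there.

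Finally, once poles on $\pi^n$ itself are excluded, I would extend the conclusion to a full open neighbourhood: the pole locus of the $A_i(t)$ in $U$ is a closed analytic subset (a divisor, by the Painlev\'e property) disjoint from the connected set $\pi^n$, so after shrinking $U$ to the complement of that divisor — still a simply connected open set, since one may take a tubular neighbourhood of $\pi^n$ inside $U\setminus(\text{pole divisor})$ — the matrices $A_i(t)$ are holomorphic on $U$, which is what is claimed. The main obstacle is the middle step: showing the normal form of condition~\ref{cond-sys-realite} genuinely survives in the limit $t\to t^\ast$ and converts ``meromorphic'' into ``locally bounded''; this is exactly the point where one needs both that $b_i,c_i$ are real and nonnegative (not merely that the eigenvalues are fixed, which alone permits $b_ic_i$ fixed with $b_i\to\infty$) and that the residues sum to a fixed diagonal matrix, so the argument is really a rigidity statement packaged as a growth estimate rather than a soft analyticity argument. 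I expect this is handled in~\cite{Desideri} by precisely this combination of the Schlesinger relations and the sum-to-$A_\infty$ normalization, and the proof here would refer to that.
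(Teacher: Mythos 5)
Your overall strategy (Painlev\'e property gives meromorphy on the universal cover, isomonodromy propagates Condition \textbf{C1} from $t^0$ to all of $\pi^n$, Proposition~\ref{prop-sys-reel-1} then gives the normal form of condition~\ref{cond-sys-realite} wherever the $A_i(t)$ are finite, and one must upgrade ``meromorphic'' to ``locally bounded'' near a hypothetical real pole) is the right skeleton, and it matches the outline the paper gives; the paper itself only cites~\cite{Desideri} for the actual estimate, so the burden of the proof is precisely the middle step you identify as the main obstacle. Unfortunately that is exactly where your argument breaks. Taking the normal form with $b_i,c_i\in[0,+\infty)$ literally, your computation $e^{i\eta}\sum_i b_i=(A_\infty)_{12}=0$ forces \emph{every} $b_i$ (and symmetrically every $c_i$) to vanish identically on $\pi^n$, i.e.\ all residue matrices are diagonal and the system is reducible. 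This contradicts the irreducibility of the monodromy generated by the $M_i=D_iD_{i-1}^{-1}$ for $D\in\D^n$ (pairwise non-collinear axes), so the conclusion is absurd rather than a proof of boundedness: it shows the sign restriction cannot be used this way. What the reality condition actually delivers, via $\overline{A}_i=\mathrm{diag}(\xi,1)\,A_i\,\mathrm{diag}(\xi,1)^{-1}$ with $|\xi|=1$, is that $A^i_{12}\in e^{i\eta}\R$ and $A^i_{21}\in e^{-i\eta}\R$, i.e.\ $b_i,c_i$ real of arbitrary sign. With that corrected, both of your estimates collapse: $\sum_i b_i=0$ no longer bounds the individual $b_i$, and the determinant identity $a_i^2+b_ic_i=\t_i^2/4$ no longer bounds $a_i$ or $b_ic_i$ separately, since $b_ic_i$ may be large and negative. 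This is the essential difference between the $\SU(2)$ (Euclidean) and $\SU(1,1)$ (Minkowski) situations: an indefinite invariant Hermitian form does not force matrices with fixed eigenvalues into a compact set, so the boundedness is a genuine rigidity statement that needs the finer analysis of~\cite{Desideri} (control of the invariants $\tr(A_iA_j)$ together with the unitarizability of the monodromy), not just the two soft constraints you invoke.

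Your final topological step (removing the polar divisor and retaining a simply connected neighbourhood of the contractible set $\pi^n$) is fine once local boundedness on $\pi^n$ is established, and your use of the constancy of the monodromy along the Schlesinger flow to propagate Condition~\textbf{C1} is also correct. But as written the proof does not close the central gap, and the fact that your own intermediate conclusion ($b_i\equiv 0$ for all $i$) trivializes the system should have been a red flag that the sign hypothesis was being over-used.
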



\section{The length-ratio function}
\label{section-ratio}

The goal of this section, that ends the proof of Theorem~\ref{thm-plateau-max}, is to show that every polygonal curve in $\p^n_D$ belongs to the family $\left( P_D(t) , t\in\pi^n \right)$. Since the space $\p^n_D$ is isomorphic to $(0,+\infty)^n$, and since a coordinate system on it is given by $n$ length ratios, this amounts to proving that the $n$-tuples of length ratios of the polygonal curves $P_D(t)$ take all the values in $(0,+\infty)^n$. We thus have to express these length ratios, and to study their behavior when $t$ is varying in $\pi^n$, by taking into account the description by the Schlesinger system. We will obtain a more complicated expression than in Euclidean case, because of the existence of singularities on maxfaces. We will see how one can boiled down to the Euclidean case by using the implicit function theorem, and we will then briefly present the strategy of the proof developed in~\cite{Desideri}.

\bigskip

On the boundary, the singularities of a maxface $X_D(t)$ in $\X^n_D$ are isolated. This can be deduced from the reality properties of the Fuchsian system $(A_D(t))$ and of its fundamental solution $\Y_0(x,t)$, and also because along an edge $(a_i,a_{i+1})$, the Gauss map $N$ of $X_D(t)$ lies in the intersection of the timelike plane of normal vector $u_i$ with the sphere $\H^2$. Its stereographic projection $g$ thus takes values into a line that intersects the circle $|g|=1$ of singular values only twice. Since the function $g$ is meromorphic, it thus does not accumulate along the edge at a value $|g|=1$. Moreover, since $N$ is well-defined and timelike at the vertices $x=t_i$, the maxface $X_D(t)$ only has a finite number of boundary singularities. Let us remind that the induced metric of the maxface $X_D(t)$ is given by
\[
\dd s^2=\left(|G(x,t)|^2-|H(x,t)|^2 \right)^2|\dd x|^2
\]
where $\gxt$ is the first line of the fundamental solution $\Y_0(x,t)$. Since the sign of the quantity $\left|G(x,t)|-|H(x,t)\right|$ may change at those singular points (and we will see that it does change), we get the following expression of the length ratios of the polygonal boundary curve $P_D(t)$
\[
 r_i(t)  = \frac{\displaystyle\int_{t_i}^{t_{i+1}} \left||G(x,t)|^2 - |H(x,t)|^2\right| \dd x}{\displaystyle\int_0^1 \left||G(x,t)|^2 - |H(x,t)|^2 \right| \dd x},
\]
($i=1,\ldots,n$). Even if the fundamental solution $\Y_0(x,t)$ is defined up to multiplication by real scalars, for all $t\in\pi^n$, the length ratios $r_i(t)$ are correctly defined. We then define the length-ratio function $F_D$ associated with the oriented direction $D\in\D^n$
\[
 F_D:\pi^n \to (0,+\infty)^n, \qquad F_D(t)=(r_1(t),\ldots,r_n(t)),
\]
and we have to establish the following result.

\begin{thm}
For any given oriented direction $D\in\D^n$, the length-ratio function $F_D:\pi^n \to (0,+\infty)^n$ is surjective.
\label{thm-surj}
\end{thm}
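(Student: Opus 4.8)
The plan is to reduce the Minkowski statement to its Euclidean counterpart, which was established in \cite{Desideri}, by showing that the length-ratio function $F_D$ in Minkowski space has the same qualitative behavior at the boundary of the simplex $\pi^n$ as the one attached to minimal disks. First I would rewrite the integrand $\bigl||G(x,t)|^2-|H(x,t)|^2\bigr|$ in a form better suited to analysis: on each open edge $(t_i,t_{i+1})$ the function $g=-G/H$ runs along a fixed line meeting the unit circle at most twice, so the sign of $|G|-|H|$ changes at a finite, explicitly locatable set of points (the boundary singularities of $X_D(t)$), and between consecutive such points the quantity $|G|^2-|H|^2$ is a smooth function of $(x,t)$ with controlled sign. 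Using the relation $\Y_0(x,t)=\Y_\infty(x,t)\,C_0$ and the holomorphicity of the $A_i(t)$ on a neighborhood of $\pi^n$ (Proposition \ref{prop-Ai-holo}), each integral $\int_{t_i}^{t_{i+1}}|\,|G|^2-|H|^2|\,\dd x$ becomes a finite sum of integrals of real-analytic functions over subintervals whose endpoints depend real-analytically on $t$; hence $F_D$ is real-analytic on $\pi^n$, exactly as in the Euclidean case, and one may differentiate under the integral sign.

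The core of the argument is the behavior of $F_D$ at the faces of $\pi^n$, i.e. when some group of the $t_i$ coalesce or when $t_1\to-\infty$, since Theorem \ref{thm-surj} is then obtained by the same induction on $n+3$ and degree argument as in \cite{Desideri}. The behavior of the Schlesinger system at these fixed singularities is governed by the asymptotics of Sato--Miwa--Jimbo \cite{SMJ}: when $t_j,\dots,t_{j+k}$ merge, the Fuchsian system degenerates to a product of a ``small'' system (the merging block) and a ``large'' system with fewer singularities, and the Weierstrass data $G,H$ — hence the metric — factor accordingly up to the appropriate powers of $x-t_\ast$. The step where the Minkowski case genuinely differs is that the integrands carry absolute values, so one must check that the splitting points of the sign of $|G|-|H|$ either stay away from the coalescing cluster or, if they enter it, do so in a way that contributes only lower-order terms. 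The key observation is that near a vertex $t_i$ the maxface is regular (the Gauss map is timelike there, $N(t_i)=\pm v_i$), so no singularity of $X_D(t)$ approaches a vertex; combined with the meromorphy of $g$, this confines the sign changes to a compact subset of the interior of each edge, uniformly as $t$ approaches a face of $\pi^n$. Thus in the limit the ratio $r_i(t)$ is dominated by the same leading contribution as the corresponding Euclidean length ratio, and the absolute values contribute only a bounded positive multiplicative factor that is continuous up to the boundary.

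Concretely, I would: (i) establish real-analyticity of $F_D$ on $\pi^n$ and continuity of a suitably renormalized version of the integrals on the faces of $\overline{\pi^n}$, using Propositions \ref{prop-Ai-holo} and the SMJ asymptotics; (ii) show that along the boundary stratum where $t_1,\dots,t_k$ coalesce (resp. $t_1\to-\infty$), the vector of ratios $(r_1,\dots,r_n)$ converges, after the natural rescaling, to a point determined by the length-ratio function $F_{D'}$ attached to the lower-dimensional direction $D'\in\D^{n-k}$ obtained by removing the merged directions — this is where the hypothesis $D\in\D^n$ (any two directions non-collinear with timelike common normal, and $D_i,D_{n+1},D_{n+2}$ non-coplanar) is used to guarantee that $D'$ again lies in the appropriate $\D$-space, so that the induction closes; (iii) deduce by the induction hypothesis that the boundary values of $F_D$ cover $\partial\bigl((0,+\infty)^n\bigr)$ in the sense needed for the degree argument; (iv) conclude that $F_D:\pi^n\to(0,+\infty)^n$ is proper and of nonzero degree, hence surjective. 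The main obstacle I anticipate is step (ii): controlling the absolute values in the integrands uniformly near the boundary of $\pi^n$, i.e. proving that the (finitely many) sign changes of $|G(x,t)|-|H(x,t)|$ on each edge do not pile up at a coalescing cluster of singularities and do not destroy the convergence of the renormalized ratios. Everything else is, as the introduction indicates, essentially a transcription of the Euclidean argument of \cite{Desideri}, with $\SU(2)$ replaced by $\SU(1,1)$ throughout.
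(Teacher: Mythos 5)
Your proposal follows essentially the same route as the paper: eliminate the absolute values by locating the finitely many sign changes of $|G|^2-|H|^2$ on each edge (via the geometric fact that $g$ runs along a line meeting $|g|=1$ at most twice), show these splitting points depend analytically on $t$ so that $F_D$ extends holomorphically to a neighborhood of $\pi^n$, and then reduce to the Euclidean argument of~\cite{Desideri} via the Sato--Miwa--Jimbo asymptotics on the faces, induction on the number of vertices, and the degree argument. The only detail worth making explicit is the one the paper's Proposition~\ref{prop-F-holo-dans-pi} hinges on, namely that each zero of $f_i=g_i^2-h_i^2$ is \emph{simple} (because $g_i$ and $\pm h_i$ solve the same second-order linear equation and cannot agree to first order without coinciding), which is exactly what licenses your implicit-function-theorem step.
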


Theorem~\ref{thm-surj} ends the proof of Theorem~\ref{thm-plateau-max}. This should perhaps be clarified for closed polygonal curves. Consider a value $r=(r_1,\ldots,r_n)$ in $(0,+\infty)^n$ such that the polygonal curve $P\in\p^n_D$ of length ratios $r$ is closed, in the sense that the two half-lines derived from $a_1$ and $a_{n+2}$, and of respective oriented directions $-D_{n+3}$ and $D_{n+2}$, intersect each other. Thanks to Theorem~\ref{thm-surj}, we know that there exists $t^0\in\pi^n$ such that the maxface $X_D(t^0)\in\X^n_D$ is bounded by the curve $P$. The question is whether the image of the point $t_{n+3}=\infty$ by $X_D(t^0)$ is still at infinity, or at the intersection of the two half-lines, \ie whether the $(n+2)$-th and $(n+3)$-th edge lengths of $X_D(t^0)$ are infinite, or not. If the last vertex $a_{n+3}=X_D(t^0)(\infty)$ is at infinity, the maxface has still an end. Since we prescribe the behavior of the maxfaces of $\X^n_D$ at their end, we know that $X_D(t^0)$ should then be asymptotic to a helicoid containing the two half-lines. When these intersect, the helicoid is not defined anymore, and the maxface can not have an end at $t_{n+3}=\infty$.

\bigskip

To prove Theorem~\ref{thm-surj}, we mainly have to simplify the expression of the function $F_D$ in Minkowski space, \ie in fact to eliminate the moduli, in order to prove that its behavior is the same as its analogue in Euclidean space. The proof of Theorem~\ref{thm-surj} will then be exactly the same. This is the main goal of the proof of the following proposition, which corresponds to Proposition 4.4 in~\cite{Desideri}.

\begin{prop}
For any given oriented direction $D\in\D^n$, the length-ratio function $F_D$ holomorphically extends onto a simply connected open neighborhood  $U'\subset\B^n$ of the simplex $\pi^n$.
\label{prop-F-holo-dans-pi}
\end{prop}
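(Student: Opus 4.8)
The length-ratio function is built from the integrals $\int_{t_i}^{t_{i+1}}\bigl||G(x,t)|^2-|H(x,t)|^2\bigr|\,\dd x$, and the presence of the absolute value is the only obstruction to holomorphicity in $t$: the integrand $|G|^2-|H|^2$ is real-analytic in $(x,t)$ (since $\gxt$ comes from the fundamental solution $\Y_0(x,t)$, which by Proposition~\ref{prop-Ai-holo} depends holomorphically on $t$ near $\pi^n$), but it changes sign at the isolated boundary singularities of $X_D(t)$. So the plan is to locate those sign changes, split each integral accordingly, and handle each piece separately.

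\textbf{Step 1: the sign-change locus.} On each edge $(t_i,t_{i+1})$ the function $x\mapsto |G(x,t)|^2-|H(x,t)|^2$ has only finitely many zeros (this is exactly the isolatedness argument recalled just before Theorem~\ref{thm-surj}: the Gauss map along an edge meets the circle $|g|=1$ only finitely often since $g$ is meromorphic and non-constant). At a generic $t^0\in\pi^n$ these zeros are simple. I would first prove the result near such a generic $t^0$: label the sign-change points on $(t_i,t_{i+1})$ as $t_i=s_0(t)<s_1(t)<\cdots<s_{m}(t)<s_{m+1}(t)=t_{i+1}$; since each $s_j$ is a simple zero of a real-analytic function of $(x,t)$, the implicit function theorem gives that $s_j(t)$ extends holomorphically to a neighborhood $U'$ of $t^0$ in $\B^n$. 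On each subinterval the sign of $|G|^2-|H|^2$ is constant, so
\[
\int_{t_i}^{t_{i+1}}\bigl||G(x,t)|^2-|H(x,t)|^2\bigr|\,\dd x
=\sum_{j=0}^{m}(-1)^{\e_j}\int_{s_j(t)}^{s_{j+1}(t)}\bigl(|G(x,t)|^2-|H(x,t)|^2\bigr)\dd x,
\]
with fixed signs $\e_j$. Each summand is a holomorphic function of $t$: the integrand $|G|^2-|H|^2$, viewed as $G(x,t)\overline{G(\bar x,t)}-H(x,t)\overline{H(\bar x,t)}$ along real $x$, continues holomorphically in $t$ by Proposition~\ref{prop-Ai-holo}, and the endpoints $s_j(t)$ are holomorphic; differentiating under the integral sign and the Leibniz rule for the moving endpoints (the boundary terms vanish because $|G|^2-|H|^2$ vanishes at each $s_j$!) give holomorphicity. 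The same argument applies to the denominator $\int_0^1$, which is never zero on $\pi^n$, so $F_D$ is holomorphic near $t^0$.

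\textbf{Step 2: from generic to all of $\pi^n$.} The non-generic values of $t$ are those where two sign-change points collide or where one of them hits a vertex $t_i$; these form a real-analytic subvariety of $\pi^n$ of positive codimension. To cover them I would either (a) note that the zeros can be counted with multiplicity and that when two simple zeros merge into a double zero the integrand does \emph{not} change sign there, so the corresponding term simply drops out continuously, and the holomorphic extensions from the two sides patch by the identity theorem; or, more cleanly, (b) observe that $F_D$ is already known (from~\cite{Desideri}, the Euclidean prototype, adapted here) to be real-analytic on all of $\pi^n$, and a real-analytic function that extends holomorphically near every point of a connected open set extends holomorphically to a connected neighborhood — the local extensions agree on overlaps by the identity theorem. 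The vanishing of the boundary terms at the moving endpoints is the conceptual point that makes everything work and that should be emphasized.

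\textbf{Main obstacle.} The delicate point is the behavior at the non-generic $t$ and, above all, near the \emph{ends} of the edges, i.e.\ near the vertices $t_i$: one must check that no sign-change point escapes to a vertex as $t$ varies, which would create a non-removable singularity of $F_D$. This is controlled by the local analysis of Section~\ref{section-equ-fu}, where $N(t_i)=\pm v_i$ is timelike so that $|G|\neq|H|$ on a full neighborhood of each vertex, uniformly in $t$ on compact subsets and, with the reality condition~\ref{cond-sys-realite}, up to a neighborhood $U'$ of $\pi^n$ in $\B^n$; hence the sign-change points stay in the interior of each edge and Step~1 applies uniformly. Simplifying $F_D$ to this sign-stratified form — thereby reducing the Minkowski case to the Euclidean surjectivity argument of~\cite{Desideri} — is precisely the content of the proposition, and the helicoidal-end hypothesis is what keeps the picture well-behaved at $t_{n+3}=\infty$.
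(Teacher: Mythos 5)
Your overall strategy --- locate the sign changes of the integrand, show they move holomorphically via the implicit function theorem, and split each length integral into signed holomorphic pieces --- is the same as the paper's. But there is a genuine gap in how you handle the simplicity of the zeros. You prove holomorphic extension only near a \emph{generic} $t^0$ where the sign-change points are simple zeros, and then try to cover the remaining $t$ in Step~2. Option (b) is circular: the real-analyticity of $F_D$ on all of $\pi^n$ is not ``already known'' from the Euclidean case --- there the metric is $\left(|G|^2+|H|^2\right)^2|\dd x|^2$ and never degenerates, so no absolute values appear; removing them here is precisely what this proposition must establish. Option (a) (two simple zeros merging into a double zero, with the term ``dropping out continuously'') is asserted but not justified as preserving holomorphy. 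The paper closes this gap with a short ODE argument that you are missing: at a zero $z^0$ of $f_i=g_i^2-h_i^2$ one has $g_i(z^0)=\e h_i(z^0)$ with $h_i(z^0)\neq0$ (a common zero would be an apparent singularity of the Fuchsian system, which has none), and if moreover $\partial_x f_i(z^0)=0$ then $g_i$ and $\e h_i$ would be two solutions of the same second-order linear equation $(E_D(t))$ with identical Cauchy data at $z^0$, hence identical everywhere --- impossible. So \emph{every} zero is simple for \emph{every} $t\in\pi^n$, the number $m_i$ of zeros is constant, no collisions or escapes to the vertices occur, and your Step~2 becomes unnecessary.

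A second, smaller point: your proposed continuation $G(x,t)\overline{G(\bar x,t)}$ is anti-holomorphic in $t$ through the conjugated factor, so as written it does not give holomorphy in $t$. The paper avoids this by first multiplying the Weierstrass data by the matrices $S_i\in\SU(1,1)$ from Lemma~\ref{lemma-p-q-real}: the resulting pair $(g_i,h_i)$ is real on $(t_i,t_{i+1})$, and since $S_i\in\SU(1,1)$ one has $|G|^2-|H|^2=g_i^2-h_i^2=f_i(x,t)$ there, with $f_i$ genuinely holomorphic in $(x,t)$. This elimination of the moduli is the step you should make explicit before invoking the implicit function theorem; your observation that the boundary terms vanish at the moving endpoints, and your control near the vertices via the timelike Gauss map, are both correct and consistent with the paper.
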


\begin{proof}
As in the Euclidean case, for every $i=1,\ldots,n+1$, we consider the fundamental solution $\Y_i(x,t):=\Y_0(x,t)\cdot S_i$ of the Fuchsian system $(A_D(t))$, where the matrix $S_i\in\SU(1,1)$ has been defined in the proof of Lemma~\ref{lemma-p-q-real}. Its first line $\left(g_i(x,t),h_i(x,t)\right)$ is real on the interval $(t_i,t_{i+1})$, and since $S_i\in\SU(1,1)$, we have
\[
 r_i(t)  = \frac{\displaystyle \int_{t_i}^{t_{i+1}} \left|g_i(x,t)^2-h_i(x,t)^2\right| \dd x}
 {\displaystyle \int_0^1  \left| g_{n+1}(x,t)^2-h_{n+1}(x,t)^2 \right| \dd x}.
\]
For every $t\in U$, we define the function $f_i(\cdot,t)$ by
\[
 f_i(x,t):=g_i(x,t)^2-h_i(x,t)^2,
\]
it is defined and holomorphic on the universal covering space of $\P\ssm S(t)$. The function $f_i$ is real valued when $t\in\pi^n$ and $t_i<x<t_{i+1}$. We set
\[
 \ell_i(t) := \int_{t_i}^{t_{i+1}} \left|f_i(x,t)\right|\dd x,
\]
and we then have $ r_i(t)  = \ell_i(t) / \ell_{n+1}(t)$. We intend to express the functions $\ell_i(t)$ without modulus, in order to be able to holomorphically extend them onto a neighborhood of the simplex $\pi^n$ in $\B^n$.

Let us suppose first that the parameter $t$ is fixed in $\pi^n$. On the real interval $(t_i,t_{i+1})$, the zeros of the function $f_i(\cdot,t)$, which are isolated, are exactly the singularities of the maxface $X_D(t)$. Since the Gauss map $N(x,t)$ of the maxface $X_D(t)$ is well-defined and timelike at the singular points $x=t_j$, then there exist open neighborhoods (depending on $t$) of the points $x=t_j$ on which the induced metric of the maxface $X_D(t)$ does not degenerate. The function $f_i(\cdot,t)$ thus only has a finite number of real zeros between $t_i$ and $t_{i+1}$, and this finite number $m_i$ of zeros (counted with multiplicity) is independent of $t$.

Let us show that the zeros of the function $f_i(\cdot,t)$ are simple. Consider such a point $z^0\in\P\ssm S(t)$ at which we have $f_i\left(z^0,t\right)=0$, \ie
\begin{equation}
 g_i(z^0,t)=\e h_i(z^0,t), \qquad \text{where } \e=\pm1.
\label{(gi-hi)}
\end{equation}
The functions $g_i(\cdot,t)$ and $h_i(\cdot,t)$ have no common zero, because it would be an apparent singularity of the Fuchsian system $(A_D(t))$, which does not have any. The value $h_i\left(z^0,t\right)$ is thus not null. The derivative of the function $f_i(\cdot,t)$
\[
 \frac{\partial f_i}{\partial x} = 2h_i\frac{\partial h_i}{\partial x} - 2g_i\frac{\partial g_i}{\partial x},
\]
thus vanishes at the point $x=z^0$ if and only if we have
\begin{equation}
 \frac{\partial g_i}{\partial x}\left(z^0,t\right) =\e \frac{\partial h_i}{\partial x}\left(z^0,t\right).
\label{(gi-hi-derivee)}
\end{equation}
The functions $g_i(\cdot,t)$ and $\e h_i(\cdot,t)$ are solutions of the same linear homogenous second-order differential equation (the Fuchsian equation $(E_D(t))$ associated with $(A_D(t))$). If both equalities~\eqref{(gi-hi)} and~\eqref{(gi-hi-derivee)} hold, then these two solutions would coincide everywhere, which is impossible. The point $z^0$ is thus a simple zero of the function $f_i(\cdot,t)$.

Let us denote by $z_j^i(t)$, $j=1,\ldots,m_i$, the real zeros of $f_i(\cdot,t)$ between $t_i$ and $t_{i+1}$
\[
 t_i<z_1^i(t)<\cdots<z_{m_i}^i(t)<t_{i+1},
\]
and set
\[
 z_0^i(t) = t_i ,\qquad z_{m_i+1}^i(t) = t_{i+1}.
\]
Let $\e_j^i\in\{-1,+1\}$ be the sign of the function $f_i(\cdot,t)$ on the interval $\left(z_j^i(t),z_{j+1}^i(t)\right)$. In particular: $\e_j^i \cdot \e_{j+1}^i=-1$. We have
\[
 \ell_i(t) = \sum_{j=0}^{m_i} \e_j^i \int_{z_j^i(t)}^{z_{j+1}^i(t)} f_i(x,t)\dd x.
\]
Consider now small complex variations of the parameter $t$. To prove that the functions $\ell_i(t)$ are holomorphic, we have to prove in particular that the zeros $z_j^i(t)$, which are well-defined for $t\in\pi^n$, are real analytic. This is given by the implicit function theorem, since for $t\in\pi^n$ the $z_j^i(t)$ are defined by
\[
 f_i\left(z_j^i(t), t\right) =0,
\]
and since we have proved
\[
 \frac{\partial f_i}{\partial x}\left(z_j^i(t), t\right) \neq 0.
\]
For each value $t^0\in\pi^n$ and each $i,j$, we thus obtain a simply connected neighborhood $U^i_j$ of $t^0$ in $\B^n$ on which the function $z_j^i:U_j^i \to \C$ is holomorphic and verifies
\[
 \forall t\in U_j^i \quad f_i\left(z_j^i(t),t\right)=0.
\]
Taking the intersection of the open sets $U_j^i$, we obtain a simply connected neighborhood of $t^0$ in $\B^n$ on which all the functions $z_j^i$ are holomorphic.

The end of the proof is then the same as in the Euclidean case (see Proposition~4.4 in~\cite{Desideri}): the holomorphicity of the functions $f_i(x,t)$ with respect to $t$ is deduced from Proposition~\ref{prop-Ai-holo}, and we then conclude by applying the dominated convergence theorem.
\end{proof}

Since the regularity properties of the functions $g_i(x,t)$ and $h_i(x,t)$ are passed on to the functions $f_i(x,t)$ and also, when $x$ is fixed, to the $z_j^i(t)$, the length-ratio function $F_D$ exactly behaves as its Euclidean analogue, even on the boundary of the simplex $\pi^n$. The rest of the proof of Theorem~\ref{thm-surj}  is then the same as the one of Theorem 4.1 in~\cite{Desideri}. We give a sketch of the proof, which mainly relies on the behavior of the function $F_D$ at the boundary of the simplex $\pi^n$. The boundary of $\pi^n$ is formed of faces that are lower-dimensional simplexes, characterized by equalities of the type $t_i=t_{i+1}$, that is to say by the fact that some singularities $t_i$ are ``missing'', because they coincide with the following singularity. Let us consider a face $P^k$ of dimension $k$ of the boundary of $\pi^n$ ($0\leq k\leq n-1$). It is homeomorphic to the simplex $\pi^k$, and it is characterized by $n-k$ ``missing'' singularities $t_i$. We define the $(k+3)$-tuple of oriented directions $D'\in\D^k$, obtained from $D= \left (D_1,\ldots,D_{n+3} \right)$ by ``removing'' the directions $D_i$ corresponding to the singularities $t_i$ that are missing. We established in~\cite{Desideri} the following proposition.

\begin{prop}
 The function $F_D$ continuously extends on the face $P^k$. Moreover, its restriction to $P^k$ coincides, up to homeomorphisms, with the lower-dimensional length-ratio function
\[
 F_{D'}:\pi^k \to (0,+\infty)^k.
\]
\label{prop-fundamental-max}
\end{prop}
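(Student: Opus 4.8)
The plan is to track the whole isomonodromic family $\left(A_D(t),\,t\in U\right)$, and with it the maxfaces $X_D(t)$ and their boundary curves $P_D(t)$, as $t$ runs to the face $P^k$. That face is the locus where $n-k$ of the singularities $t_i$ ($1\le i\le n$) are ``missing'', i.e.\ coincide with their successor; removing from $D$ the directions $D_i$ attached to these missing $t_i$ produces the $(k+3)$-tuple $D'$, and the surviving singularities together with $0,1,\infty$ form a configuration $t'\in\pi^k$. Since the removed directions are among $D_1,\dots,D_n$, the positions of $D_{n+1},D_{n+2},D_{n+3}$ are unchanged, and the fact that $D'$ again lies in $\D^k$ is exactly what Definition~\ref{def-Dn} was designed to guarantee. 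Now $\partial\pi^n$ sits among the fixed singularities of the Schlesinger system~\eqref{schlesinger}, and the behaviour of its solutions there was determined by Sato, Miwa and Jimbo~\cite{SMJ}: as $t\to t'\in P^k$, the fundamental solution $\Y_0(x,t)$ converges, locally uniformly on $\P\ssm S(t')$, to a fundamental solution $\Y_0'(x)$ of a Fuchsian system $\left(A_{D'}(t')\right)$ with singular set $S(t')$ --- equivalently, the residues at the surviving singularities, and the partial sums of residues over the colliding clusters, converge.

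First I would check that this limit system lies in $\A^k_{D'}$, i.e.\ satisfies conditions~\ref{cond-sys-SdR}, \ref{cond-sys-mono} and \ref{cond-sys-realite} for $D'$. Its monodromy is read off by confluence of the loops $\ga_i$: encircling a cluster $t_i=\dots=t_{i+m}$ merged with $t_{i+m+1}$ produces the telescoping product $M_{i+m+1}\cdots M_i=(D_{i+m+1}D_{i+m}^{-1})\cdots(D_iD_{i-1}^{-1})=D_{i+m+1}D_{i-1}^{-1}$, so, relabelling the surviving directions as $D_1',\dots,D_{k+3}'$, a system of generators of the limit monodromy has the form $D_j'(D_{j-1}')^{-1}$ with $D_j'\in\SU^-(1,1)$ and $(D_j')^2=-\I_2$: this is condition~\ref{cond-sys-mono} for $D'$. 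In particular the limit monodromy is again contained in $\SU(1,1)$, so Conditions \textbf{C1}/\textbf{C2} pass to the limit and Proposition~\ref{prop-sys-reel-2} yields condition~\ref{cond-sys-realite}. The exponents at each merged point are pinned down modulo $\Z$ by the eigenvalues of $D_{i+m+1}D_{i-1}^{-1}$, i.e.\ by the exterior angle $\t_j'\pi$ of $D'$ there; the genericity of the confluence provided by~\cite{SMJ}, together with the rigidity coming from the prescribed eigenvalues $\pm\t_i/2$ and from non-resonance, then forces exactly the exponents $\pm\t_j'/2$ demanded by condition~\ref{cond-sys-SdR}. Hence $\left(A_{D'}(t'),\,t'\in\pi^k\right)\subset\A^k_{D'}$; the first line of $\Y_0'$ is the Weierstrass data of $X_{D'}(t')\in\X^k_{D'}$, and since the convergence is uniform on the closed non-degenerate edges and near the surviving vertices one gets $X_D(t)\to X_{D'}(t')$ and $P_D(t)\to P_{D'}(t')$.

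It then remains to pass to the limit in the integrals defining $F_D$. Recall $r_i(t)=\ell_i(t)/\ell_{n+1}(t)$ with $\ell_i(t)=\int_{t_i}^{t_{i+1}}|f_i(x,t)|\,\dd x$. If the edge index $i$ survives in $P^k$, then $[t_i,t_{i+1}]$ stays non-degenerate; the real zeros $t_i<z_1^i(t)<\dots<z_{m_i}^i(t)<t_{i+1}$ of $f_i(\cdot,t)$ are simple (proof of Proposition~\ref{prop-F-holo-dans-pi}) and, by the implicit function theorem together with Proposition~\ref{prop-Ai-holo}, extend continuously up to $P^k$; since the Gauss map stays timelike at the surviving vertices, no new zero is created at the endpoints, so $m_i$ is locally constant up to the face, and dominated convergence on each $\left[z_j^i(t),z_{j+1}^i(t)\right]$ gives
\[
\ell_i(t)=\sum_{j=0}^{m_i}\e_j^i\int_{z_j^i(t)}^{z_{j+1}^i(t)}f_i(x,t)\,\dd x\ \longrightarrow\ \ell_i'(t'),
\]
the analogous quantity for $X_{D'}(t')$. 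If $i$ is a vanishing index ($t_i=t_{i+1}$ on $P^k$), then the length of the integration interval tends to $0$ while $|f_i|$ stays bounded there, so $\ell_i(t)\to0$. The reference edge $(a_{n+1},a_{n+2})$, i.e.\ the interval $[0,1]$, never collides, so $\ell_{n+1}(t)\to\ell_{n+1}'(t')>0$; therefore each $r_i(t)$ has a finite limit, equal to the corresponding length ratio of $P_{D'}(t')$ when $i$ survives and to $0$ when $i$ vanishes. This shows that $F_D$ extends continuously to $P^k$ and that, under the homeomorphism $P^k\simeq\pi^k$ and the projection discarding the (identically zero) vanishing coordinates, $F_D\big|_{P^k}$ is identified with $F_{D'}$.

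The main obstacle is the very first step: showing that the Schlesinger flow, run to $\partial\pi^n$, produces again a \emph{non-resonant, normalized, reality-compatible} Fuchsian system with precisely the exponents $\pm\t_j'/2$, rather than a logarithmic or otherwise degenerate limit. This is what has to be extracted from~\cite{SMJ}, using the rigidity built into conditions~\ref{cond-sys-SdR}--\ref{cond-sys-realite}, and it carries essentially all of the analysis --- already done in the Euclidean setting in~\cite{Desideri}. Once it is in place, the passage to the limit in $F_D$ is routine, the only genuinely Minkowski-specific point being the treatment of the moduli $|f_i|$, handled through the simplicity of the zeros $z_j^i(t)$ and the timelike character of the Gauss map at the vertices.
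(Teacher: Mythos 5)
Your proposal follows essentially the same route as the paper: the paper gives no self-contained proof here, but reduces the Minkowski case to the Euclidean one via the modulus-free rewriting of the $\ell_i$ in Proposition~\ref{prop-F-holo-dans-pi} and then invokes the confluence analysis of the Schlesinger system at its fixed singularities (Garnier, Sato--Miwa--Jimbo) carried out in~\cite{Desideri}. Your telescoping of the monodromy matrices, the verification that the limit system lies in $\A^k_{D'}$, and the passage to the limit in the integrals are exactly the ingredients of that argument, and you correctly identify the extraction of a non-degenerate limit from~\cite{SMJ} as the step carrying all the analytic weight.

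One small inaccuracy: on a collapsing edge you assert that $|f_i|$ stays bounded, but $f_i=g_i^2-h_i^2$ blows up like $|x-t_i|^{-\t_i}$ and $|x-t_{i+1}|^{-\t_{i+1}}$ at the endpoints, so the vanishing of $\ell_i(t)$ does not follow from the shrinking of the interval alone; it requires the Sato--Miwa--Jimbo asymptotics to control the coefficients of these singular terms as $t_i\to t_{i+1}$, i.e.\ it belongs to the hard step you have already deferred, not to the ``routine'' passage to the limit.
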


Even if the geometrical meaning of this result is natural, its proof constitutes the most difficult step of the proof of Theorem~\ref{thm-surj}. It is based on the behavior of the solutions of the Schlesinger system at its fixed singularities, that is to say at the points $t$ such that $t_i=t_j$, $i\neq j$. This is a known part of Garnier's work~\cite{Garnier26}, that has been further developed and generalized by Sato, Miwa et Jimbo~\cite{SMJ}. By fitting these results to our situation, we established Proposition~\ref{prop-fundamental-max}.

An induction on the number $n+3$ of vertices then enables us to conclude the proof of Theorem~\ref{thm-surj}. By identifying the simplexes $\pi^n$ and $(0,+\infty)^n$, we obtain a function $\w F_D:(0,+\infty)^n\to(0,+\infty)^n$ which is surjective if and only if $F_D$ is. The induction hypothesis is that for any $k=1,\ldots,n$, and for any oriented direction $D\in\D^k$, the function $\w F_D$ is of degree $1$, that is to say, is homotopic to the identity on $(0,+\infty)^k$. The basis of the induction for $n=1$ is a direct consequence of Proposition~\ref{prop-fundamental-max}. For the inductive step, we need the following topological result as well, which is proved in~\cite{Desideri}, Proposition~4.5.

\begin{prop}
Let $K\subset \R^n$ be a convex compact, and $f:K\to K$ a continuous function on $K$. If $f(\partial K) \subset \partial K$, and if the function $f\mid_{\partial K}: \partial K\to \partial K$ is of degree $1$, then the function $f:K\to K$ is of degree $1$.
\label{prop-degree}
\end{prop}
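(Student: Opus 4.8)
The plan is to recognise Proposition~\ref{prop-degree} as a standard fact of Brouwer degree theory and to reduce it to the elementary statement that the degree of a map of the disk at an interior point is determined by the restriction to the boundary sphere. First I would normalise: if $\operatorname{int}K=\emptyset$ one works in the affine span of $K$ (of dimension $<n$) and the argument is the same there, so we may assume $\operatorname{int}K\neq\emptyset$; after a translation $0\in\operatorname{int}K$, and the map $x\mapsto x/\mu_K(x)$, where $\mu_K$ is the Minkowski gauge of the convex body $K$, is a homeomorphism of pairs $(K,\partial K)\xrightarrow{\ \sim\ }(\bar B^n,S^{n-1})$, conjugating $f$ to a continuous map $f\colon\bar B^n\to\bar B^n$ with $f(S^{n-1})\subset S^{n-1}$. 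Here ``$f$ has degree $1$'' means that $f_*$ is the identity on $H_n(\bar B^n,S^{n-1})\cong\Z$, equivalently that the Brouwer degree $\deg(f,B^n,0)$ equals $1$ — the two integers coincide because $0\notin f(S^{n-1})$.

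The key step is then the classical fact that for a continuous map $f\colon\bar B^n\to\R^n$ and a point $y\notin f(S^{n-1})$, the degree $\deg(f,B^n,y)$ depends only on $f|_{S^{n-1}}$ up to homotopy in $\R^n\ssm\{y\}$: it is the winding number of $f|_{S^{n-1}}$ about $y$. Applying this with $y=0$: by hypothesis $f|_{S^{n-1}}$ already takes values in $S^{n-1}$, and the radial retraction $r\colon\R^n\ssm\{0\}\to S^{n-1}$, $r(x)=x/|x|$, restricts to the identity on $S^{n-1}$, so $r\circ(f|_{S^{n-1}})=f|_{S^{n-1}}$; hence the winding number of $f|_{S^{n-1}}$ about $0$ is exactly its topological degree as a self-map of $S^{n-1}$, which is $1$ by assumption. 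Therefore $\deg(f,B^n,0)=1$, i.e.\ $f$ has degree $1$. Equivalently — and this is the form I would actually write down if homology is preferred to analytic degree theory — one compares the long exact sequences of the pair $(\bar B^n,S^{n-1})$ for $f$ and for $\id$: the connecting map $\partial\colon H_n(\bar B^n,S^{n-1})\to H_{n-1}(S^{n-1})$ is an isomorphism since $\tilde H_*(\bar B^n)=0$, and its naturality gives $\partial\circ f_*=(f|_{S^{n-1}})_*\circ\partial$ on $H_n(\bar B^n,S^{n-1})\cong\Z$, so $f_*$ is multiplication by the same integer as $(f|_{S^{n-1}})_*$, namely $1$.

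I do not expect a genuine obstacle in this proposition; it is purely a point-set/algebraic-topology lemma with no input from the rest of the paper. The only matters that need care are the normalisation (disposing of the degenerate case $\operatorname{int}K=\emptyset$, and checking that the gauge map is a homeomorphism of pairs) and fixing once and for all which notion of ``degree'' of a self-map of $(K,\partial K)$ is meant — the relative-homology multiplier on $H_n(K,\partial K)$, the Brouwer degree at an interior point, and the degree of the boundary restriction all agree. If one wished to avoid degree theory altogether, the alternative would be to deform $f$, through maps of pairs, to a map equal to the identity on a collar of $\partial K$, using a collar neighbourhood of $S^{n-1}$ in $\bar B^n$ together with a homotopy $f|_{S^{n-1}}\simeq\id_{S^{n-1}}$ supplied by Hopf's degree theorem, after which the claim is immediate; but this route is longer and I would take the degree-theoretic one above.
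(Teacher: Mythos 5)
Your argument is correct, and it is the standard one. Note that the paper itself does not prove Proposition~\ref{prop-degree}: it only states it and refers to Proposition~4.5 of~\cite{Desideri} for the proof, so there is nothing in this text to compare against; your self-contained homological argument (naturality of the connecting homomorphism $\partial\colon H_n(\bar B^n,S^{n-1})\to \tilde H_{n-1}(S^{n-1})$, which is an isomorphism because $\tilde H_*(\bar B^n)=0$, so that $f_*$ on $H_n(\bar B^n,S^{n-1})$ is multiplication by the same integer as $(f|_{S^{n-1}})_*$) is exactly the expected proof, and it correctly reconciles the three notions of degree in play. For the application in Section~\ref{section-ratio} the set $K$ is a full-dimensional simplex, so the degenerate case $\operatorname{int}K=\emptyset$ never arises.

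One small slip in the normalisation step: the map $x\mapsto x/\mu_K(x)$ is the radial retraction of $K\ssm\{0\}$ onto $\partial K$, not a homeomorphism onto the ball. The homeomorphism of pairs $(K,\partial K)\to(\bar B^n,S^{n-1})$ you want is $x\mapsto \mu_K(x)\,x/|x|$ for $x\neq0$ and $0\mapsto0$ (continuity at $0$ and bijectivity follow from the standard two-sided bounds $c|x|\le\mu_K(x)\le C|x|$ for a convex body with $0$ in its interior). This is a one-line fix and does not affect the rest of the argument.
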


This ends the proof of Theorem~\ref{thm-surj}, and thus the one of Theorem~\ref{thm-plateau-max} as well.

\bibliographystyle{plain}
\bibliography{biblio}

\end{document}